\newcommand{\lbl}{\label}
\renewcommand{\eta}{\nu}
\newcommand{\Reid}{\mathrm{R}}
\newtheorem{theorem}{Theorem}[section]
\theoremstyle{definition}
\newtheorem{proposition}[theorem]{Proposition}
\newtheorem{lemma}[theorem]{Lemma}
\newtheorem{definition}[theorem]{Definition}
\newtheorem{remark}[theorem]{Remark}
\newtheorem{corollary}[theorem]{Corollary}
\newtheorem{conjecture}[theorem]{Conjecture}
\newtheorem{question}[theorem]{Question}
\def\BZ{\mathbb Z}
\def\BQ{\mathbb Q}
\def\BR{\mathbb R}
\def\BC{\mathbb C}
\def\calT{\mathcal T}
\def\calX{\mathcal X}
\def\ga{\gamma}
\def\la{\langle}
\def\ra{\rangle}
\def\longto{\longrightarrow}
\def\SL{\mathrm{SL}}
\def\PSL{\mathrm{PSL}}
\def\pt{\partial}
\def\coeff{\mathrm{coeff}}
\def\be{  \begin{equation} }
\def\ee{  \end{equation} }
\def\Hom{\mathrm{Hom}}
\def\diag{\text{diag}}
\def\hb{\hbar}
\def\Sp{\mathrm{Sp}}
\def\ISp{\mathrm{ISp}}
\def\vol{\mathrm{Vol}}
\def\geom{\mathrm{geom}}
\def\EP{\mathrm{EP}}
\def\cxymatrix#1{\xy*[c]\xybox{\xymatrix#1}\endxy}
\newcommand{\CZ}{\mathcal{Z}}
\newcommand{\CH}{\mathcal{H}}
\newcommand{\CT}{\mathcal{T}}
\newcommand{\CL}{\mathcal{L}}
\newcommand{\CP}{\mathcal{P}}
\newcommand{\C}{\mathbb{C}}
\newcommand{\Z}{\mathbb{Z}}
\newcommand{\Li}{{\rm Li}}
\newcommand{\wt}{\widetilde}
\newcommand{\eg}{\emph{e.g.}}
\newcommand{\ie}{\emph{i.e.}}
\newcommand{\cf}{\emph{cf.}}
\newcommand{\mb}{\mathbf}
\newcommand{\rank}{\mathrm{rank}}
\newcommand{\pd}{\partial}
\numberwithin{equation}{section}
\begin{document}


\title[The quantum content of the gluing equations]{
The quantum content of the gluing equations}
\author{Tudor Dimofte}
\address{School of Natural Sciences \\
         Institute for Advanced Study  \\
         Princeton, NJ 08540, USA \newline
         \indent Trinity College \\ Cambridge CB2 1TQ \\ UK
        }
\email{tdd@ias.edu}
\author{Stavros Garoufalidis}
\address{School of Mathematics \\
         Georgia Institute of Technology \\
         Atlanta, GA 30332-0160, USA \newline
         {\tt \url{http://www.math.gatech.edu/~stavros }}}
\email{stavros@math.gatech.edu}
\thanks{The work of SG is supported by NSF grant DMS-11-05678. The work of 
TD is supported primarily by the Friends of the Institute for Advanced Study, 
and in part by DOE grant DE-FG02-90ER40542. \\
\newline
1991 {\em Mathematics Classification.} Primary 57N10. Secondary 57M25.
\newline
{\em Key words and phrases: volume, complex Chern-Simons theory, Kashaev
invariant, gluing equations, Neumann-Zagier
equations, Neumann-Zagier datum, hyperbolic geometry, ideal triangulations,
1-loop, torsion, quantum dilogarithm, state-integral, 
perturbation theory, Feynman diagrams, formal Gaussian integration.
}
}

\date{October 25, 2012}

\begin{abstract}
The gluing equations of a cusped hyperbolic 3-manifold $M$ 
are a system of polynomial equations in the shapes of an ideal triangulation
$\calT$ of $M$ that describe the complete hyperbolic structure of $M$ and 
its deformations. Given a Neumann-Zagier datum  (comprising the shapes together
with the gluing equations in a particular canonical form) 
we define a formal power series with coefficients in the invariant 
trace field of $M$ that should (a) agree with the asymptotic 
expansion of the Kashaev invariant to all orders, and (b) contain the nonabelian Reidemeister-Ray-Singer torsion of $M$ as its first subleading ``1-loop'' term. 
As a case study, we prove topological invariance of the 1-loop part of the
constructed series and extend it into a formal power series of rational
functions on the $\PSL(2,\BC)$ character variety of $M$. We provide a computer 
implementation of the first three terms of the series using the standard 
{\tt SnapPy} toolbox and check numerically the agreement of our torsion
with the Reidemeister-Ray-Singer for all $59924$ hyperbolic knots with at 
most 14 crossings.
Finally, we explain how the definition of our series follows from
the quantization of 3d hyperbolic geometry, using principles of
Topological Quantum Field Theory.
Our results have a straightforward extension to any 3-manifold $M$ with 
torus boundary components (not necessarily hyperbolic) that admits a regular 
ideal triangulation with respect to some $\PSL(2,\BC)$ representation.
\end{abstract}

\maketitle

\tableofcontents

\section{Introduction}
\lbl{sec.intro}

\subsection{The Kashaev invariant and perturbative 
Chern-Simons theory}
\lbl{sub.summary}

The {\em Kashaev invariant} $\la K \ra_N \in \BC$ 
of a knot $K$ in 3-space (for $N=2,3,\dots$) is a powerful sequence of 
complex numbers determined by the {\em Jones polynomial} of the knot \cite{Jo} 
and its cablings \cite{Turaev, Witten-Jones}. The {\em Volume Conjecture} 
of Kashaev and Murakami-Murakami \cite{K94,K95,MM} relates the 
Kashaev invariant of a hyperbolic knot $K$ with the hyperbolic volume 
$\vol(M)$ of its complement $M=S^3\setminus K$ \cite{Th}
\be
\lim_{N\to\infty} \frac{1}{N} \log|\la K \ra_N|=\frac{\vol(M)}{2\pi} \,.
\ee
A generalization  of the Volume Conjecture \cite{Gu} 
predicts a full asymptotic expansion of the Kashaev invariant to all orders in
$1/N$ 
\be
\lbl{eq.KAS}
\la K \ra_N \overset{N\to\infty}{\sim} \CZ_M(2 \pi i /N)
\ee
for a suitable formal power series 
\be
\lbl{eq.ZMhb}
\qquad \CZ_M(\hb)=\exp\bigg(\frac1\hbar S_{M,0}-\frac32\log\hbar+S_{M,1}
+\sum_{n\geq 2}\hbar^{n-1}S_{M,n}\bigg)\,,\qquad \hbar = \frac{2\pi i}{N}\,.
\ee
The formal power series $\CZ_M(\hb)$ in \eqref{eq.ZMhb} is 
conjectured to coincide with the perturbative partition function of Chern-Simons theory with complex gauge group $\SL(2,\BC)$ along the discrete
faithful representation $\rho_0$ of the hyperbolic manifold $M$.
Combining such an interpretation with
further conjectures of \cite{DGLZ,Ga1,GL} one predicts that 
\begin{itemize}
\item
$S_{M,0}=i(\text{Vol}_M+i\text{CS}_M) \in \BC/(4 \pi^2\BZ)$ is the complexified volume of $M$ (\cf\ \cite{Thurston-paper, Neumann-combi}).
\item
$S_{M,1}$ is related \cite{Witten-cx, BNW, GuM} to the nonabelian 
Ray-Singer torsion \cite{RS}, which ought to equal (\cf\ \cite{Muller-cx}) the combinatorial
nonabelian Reidemeister torsion. More precisely, 
\cite[Conj.1.8]{DG} we should have
\be \label{TRMS1}
\tau^\Reid_M= 4\pi^3 \exp(-2 S_{M,1}) \in E_M^*
\ee
where $\tau^\Reid_M$ is the nonabelian Reidemeister-Ray-Singer torsion of 
$M$ with respect to the meridian \cite{Po,Db}, 
and $E_M$ is the invariant trace field of $M$.
\item
For $n \geq 2$, the {\em $n$-loop invariants} $S_{M,n}$ 
are conjectured to lie in the invariant trace field $E_M$ \cite{DGLZ,Ga1}.
\end{itemize}

The generalization \eqref{eq.KAS} of the Volume Conjecture has been 
numerically verified for a few knots using either state-integral formulas for
Chern-Simons theory when available \cite{DGLZ} or a numerical 
computation of the Kashaev invariant and its numerical asymptotics, lifted to
algebraic numbers \cite{GZ,Ga2}. 

Our goal is to provide an exact, combinatorial definition of the formal
power series $\CZ_M(\hb)$ via formal Gaussian integration using the shape 
parameters and the Neumann-Zagier matrices
of a regular ideal triangulation of $M$. Our definitions

\begin{itemize}
\item
express the putative torsion $\exp(-2S_{M,1})$ and the $n$-loop invariants $S_{M,n}$ manifestly in terms of the shape parameters $z_i$ and the gluing matrices of a regular 
ideal triangulation $\CT$ of $M$;
\item
manifestly deduce that the putative torsion and the $n$-loop invariants 
for $n \geq 2$ are elements of the invariant trace field;
\item
explain the difference of $\CZ_M(\hb)$ for pairs of 
geometrically similar knots studied by Zagier and the second author;
\item
provide an effective way to compute the $n$-loop invariants using 
standard commands of the {\tt SnapPy} toolbox \cite{SnapPy} --- 
as demonstrated for $n=1,2,3$ for hyperbolic knots with at most 14 crossings; 
and
\item
allow efficient tests of the asymptotics of the Volume Conjecture 
\eqref{eq.KAS}, the ``1-loop Conjecture'' \eqref{TRMS1} and
other conjectures in Quantum Topology. 
\end{itemize}

\noindent We note that we only define $\exp(-2S_{M,1})$ up to a sign, and $S_{M,2}$ modulo $\Z/24$. All higher $n$-loop invariants are defined unambiguously.

 Although we give a purely combinatorial definition of $\CZ_M(\hb)$ 
without any knowledge of state integrals or Chern-Simons theory with 
complex gauge group, in Section \ref{sec.SIM} we explain how our definition
follows from the state-integral model of the first author \cite{D1}
and its perturbative expansion.

\subsection{The Neumann-Zagier datum}
\lbl{sec.introNZ}

All manifolds and all ideal triangulations in this paper will be 
oriented. 
The volume of a hyperbolic manifold $M$, appearing in the Volume
Conjecture and contributing to $S_{M,0}$, is already known to have a simple 
expression in terms of shape parameters of a regular ideal 
triangulation, \ie, one that recovers the complete hyperbolic structure of 
$M$. (For extended discussion on regular triangulations, see Section \ref{sec.extensions}.)
If $\CT=\{\Delta_i\}_{i=1}^N$ is a regular ideal triangulation of $M$
with shape parameters 
$z_i \in \BC\setminus\{0,1\}$ for $i=1,\dots,N$, then (\cf\ \cite{DuS, NZ})
\be \mathrm{Vol}(M) 
= \sum_{i=1}^N D(z_i)\,,
\lbl{eq.Dz}
\ee
where $D(z) := \mathrm{Im}\big(\Li_2(z)\big)+\arg(1-z)\log|z|$ is the Bloch-Wigner dilogarithm function.
This formula can also be interpreted as calculating the image of the class 
$[M]:=\sum_i[z_i]\,\in\,\mathcal{B}$ of $M$ in the 
Bloch group $\mathcal{B}$ under the natural map $D:\mathcal{B} \to \BR$. 
An analogous formula, using the class of $M$ in the ``extended'' Bloch group 
$\widehat{\mathcal{B}}$, gives the full complexified volume $S_{M,0}$ 
\cite{Neumann-combi, N,GoZi, Zickert-rep}. 

It is natural to ask whether the class of $M$ in $\mathcal{B}$
determines not only $S_{M,0}$ but the higher $S_{M,n}$ as well. 
This question was posed to the authors several years ago by D. Zagier.
Subsequent computations \cite{GZ,Ga2} indicated that a positive answer
was not possible. For example, there is a family of pairs of pretzel knots 
$\big((-2,3,3+2p),\,(-2,3,3-2p)\big)$ for $p=2,3,...$, as well as the figure-eight knot and its sister, which all have the same class 
in the Bloch group (and classes differing by 6-torsion in the extended
Bloch group), but different invariants $S_{M,n}$ for $n \geq 1$. 

The extra information necessary to determine the $S_{M,n}$ can be described as follows.
Recall that if $\CT$ 
is a regular ideal triangulation of $M$ with $N$ tetrahedra, its shapes $z=(z_1,\dots,z_N)$ satisfy a system of 
polynomial equations, one equation for every 
edge, and one imposing 
parabolic holonomy around the meridian of the cusp \cite{Th, NZ}. Let us set
\be z_i' = (1-z_i)^{-1}\,,\qquad z_i''=1-z_i^{-1}\,.\ee
The equations can then be written in the form
\be \qquad
z^{\mb A} z''{}^{\mb B} :=
\prod_{j=1}^N z_j^{\mb A_{ij}}\big(1-z_j^{-1}\big)^{\mb B_{ij}}=\pm 1\,,
\qquad (i=1,...,N)\,, \lbl{NZintro} \ee
where $\mb A$ and $\mb B$ are $N\times N$ square matrices with integer 
entries, which we call the \emph{Neumann-Zagier matrices} following \cite{NZ}. 

\begin{definition}
\lbl{def.NZdatum}
If $\CT$ is a regular ideal triangulation of $M$, its {\em Neumann-Zagier
datum} (resp., {\em enhanced Neumann-Zagier datum}) is given by the triple
$$
\beta_\CT=(z,\mb A,\mb B) \qquad \text{resp.,}
\qquad\widehat\beta_{\calT}=(z,\mb A,\mb B, f)\,,
$$
where $z$ is a solution to the gluing equations and
$f$ is a combinatorial flattening of $\calT$, a collection of integers that 
we define in Section \ref{sub.flat}.
\end{definition}
As we will discuss in detail in Section \ref{sec.triang},
implicit in the above definition is the dependence of $\beta_{\CT}$ 
and $\widehat\beta_{\calT}$ on the following choices:
\begin{enumerate}
\item
a pair of opposite edges for every oriented ideal tetrahedron
(a so-called choice of {\em quad type}), 
\item
an edge of $\calT$, 
\item
a meridian loop in the boundary of $M$ in general position with respect to $\calT$,
\item a combinatorial flattening.
\end{enumerate}

\subsection{The 1-loop invariant} 
\lbl{sub.1loopdef}

\begin{definition}
\lbl{def.1loopdef}
Given a one-cusped hyperbolic manifold $M$ with regular ideal 
triangulation $\CT$ and enhanced Neumann-Zagier datum $\widehat\beta_\CT$ 
we define:
\be 
\qquad\tau_{\CT} := 
\pm \frac12\det\big(\mb A\Delta_{z''}
+\mb B\Delta_z^{-1}\big)z^{f''}z''^{-f}
\quad\in\,E_M/\{\pm 1\}\,,
\lbl{torintro} 
\ee
where $\Delta_z:=\diag(z_1,...,z_N)$ and 
$\Delta_{z''}:=\diag(z_1'',...,z_N'')$ are diagonal matrices, and  
$z^{f''}z''^{-f} := \prod_i z_i{}^{f''_i}z''_i{}^{-f_i}$. 
\end{definition}

\noindent Note that $\tau_{\CT}$ takes value in the invariant trace field of $M$
and is only defined up to a sign. In Section \ref{sec.torsion} we will show that

\begin{theorem}
\lbl{thm.indep}
$\tau_{\CT}$ is independent of the quad type of $\calT$, the chosen
edge of $\calT$, the choice of a meridian loop, and the choice of 
a combinatorial flattening.
\end{theorem}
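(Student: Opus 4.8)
The plan is to go through the four choices listed after Definition~\ref{def.NZdatum} one at a time, to determine how each transforms the enhanced Neumann--Zagier datum $\widehat\beta_\CT=(z,\mb A,\mb B,f)$, and to check that the right-hand side of \eqref{torintro} is unchanged up to the allowed sign. Every elementary move turns out to be a composition of three types of operation: an integral row operation on the $N\times 2N$ matrix $(\mb A\mid\mb B)$; an order-three $\SL(2,\BZ)$ operation mixing the $j$-th columns of $\mb A$ and of $\mb B$, accompanied by a cyclic relabeling of $(z_j,z_j',z_j'')$ and of $(f_j,f_j',f_j'')$ at one tetrahedron; and an integral shift of the flattening $f$ by a difference of combinatorial flattenings. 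The uniform mechanism I expect to exploit is that in each case the change of $\det(\mb A\Delta_{z''}+\mb B\Delta_z^{-1})$ is compensated exactly by the change of the monomial $z^{f''}z''^{-f}$. The two standing tools are the Neumann--Zagier symplectic structure of $(\mb A\mid\mb B)$ (full rank $N$, completion to a symplectic integral matrix), which guarantees that the modified triple remains a legitimate Neumann--Zagier datum, and the elementary shape identities $z_j'=(1-z_j)^{-1}$, $z_j''=1-z_j^{-1}$, $z_jz_j'z_j''=-1$ recalled in Section~\ref{sec.triang}.

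The flattening is the easiest case, since the determinant in \eqref{torintro} does not involve $f$; it suffices that $z^{f''}z''^{-f}$ be unchanged. By the defining relations of a combinatorial flattening in Section~\ref{sub.flat}, the difference $(\d f,\d f',\d f'')$ of two flattenings satisfies the homogeneous version of the edge and cusp gluing equations, which translates into the statement that, for suitable integers $c_i$,
\be
z^{\d f''}z''^{-\d f}=\prod_i\big(z^{\mb A_{i\cdot}}z''^{\mb B_{i\cdot}}\big)^{c_i}=\pm1
\ee
(each factor on the right being $\pm1$ by \eqref{NZintro}, since $\mb A_{i\cdot},\mb B_{i\cdot}$ denote the rows defining the $i$-th gluing equation). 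Hence $\tau_\CT$ is independent of the flattening up to sign.

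Changing the chosen edge of $\calT$ and isotoping the meridian loop across a triangle of $\pt M$ each act on $(\mb A\mid\mb B)$ by an integral row operation. For the edge, replacing one omitted edge equation by another is a unimodular row operation, because the sum of all edge equations is the trivial equation; this multiplies $\det(\mb A\Delta_{z''}+\mb B\Delta_z^{-1})$ by $\pm1$, absorbed in the sign, and leaves $z$ and $f$ untouched. For the meridian, an elementary isotopy adds an integer multiple of an edge row to the meridian row, which leaves the determinant unchanged; it may in addition shift $f$, but only by a difference of combinatorial flattenings, so by the previous paragraph the monomial changes by $\pm1$. In both cases $\tau_\CT$ is unchanged up to sign.

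The quad type is the step I expect to be the main obstacle, since it genuinely alters the matrix inside the determinant. A change of quad type at the $j$-th tetrahedron cyclically permutes $(z_j,z_j',z_j'')$ and $(f_j,f_j',f_j'')$ and acts on the $j$-th columns of $\mb A$ and $\mb B$ by the order-three operation $(\mb A_{\cdot j},\mb B_{\cdot j})\mapsto(-\mb B_{\cdot j},\,\mb A_{\cdot j}-\mb B_{\cdot j})$ or its inverse, which arises from eliminating $z_j'$ via $\log z_j+\log z_j'+\log z_j''=\pi i$; all other columns of $(\mb A\mid\mb B)$, and all shapes $z_k$ with $k\neq j$, are unaffected. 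The crux observation I would establish is that, after this substitution, $z_j'=(1-z_j)^{-1}$ and $z_j''=1-z_j^{-1}$ force the new $j$-th column of $\mb A\Delta_{z''}+\mb B\Delta_z^{-1}$ to equal $-z_j$ times the old one, so the determinant is multiplied by $-z_j$. On the other side, the rotation of $(f_j,f_j',f_j'')$ matching the rotation of the shapes multiplies $z^{f''}z''^{-f}$ by $(-1)^{f_j}z_j^{-1}$, once one uses $z_jz_j'z_j''=-1$ and the normalization $f_j+f_j'+f_j''=1$ of a combinatorial flattening from Section~\ref{sub.flat}; the two factors multiply to $(-z_j)(-1)^{f_j}z_j^{-1}=(-1)^{f_j+1}=\pm1$. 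The real difficulty is bookkeeping: keeping the branch and sign conventions, the two quad-change generators, and the cyclic rotation of $(f_j,f_j',f_j'')$ mutually consistent. Once this single-tetrahedron identity is pinned down, an arbitrary change of quad types follows by composing such moves over distinct tetrahedra, and combining the four cases proves the theorem.
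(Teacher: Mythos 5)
Your plan follows the same four-case decomposition as the paper, and three of the four cases are handled the same way: the quad-type computation (the $j$-th column of $\mb A\Delta_{z''}+\mb B\Delta_z^{-1}$ picks up a factor $-z_j$, the monomial picks up $(-1)^{f_j}z_j^{-1}$, and they cancel), the edge change (multiplication by an integer row-operation matrix $P_{(I,N)}$ with $\det=-1$), and the meridian isotopy (a unimodular row operation of determinant $1$) all match the paper's Sections 3.2--3.4. That part is solid.

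The flattening step, however, has a real gap. You assert that, since $\mb A\,\delta f + \mb B\,\delta f'' = 0$, there exist \emph{integers} $c_i$ with $z^{\delta f''}z''^{-\delta f}=\prod_i\bigl(z^{\mb A_{i\cdot}}z''^{\mb B_{i\cdot}}\bigr)^{c_i}$. This is equivalent to the claim that $(\delta f'',-\delta f)$ lies in the $\Z$-row span of $(\mb A\mid\mb B)$. The Lagrangian property of Neumann--Zagier matrices only gives you that it lies in the $\BQ$-row span (the row space is a $\BQ$-Lagrangian, equal to its own symplectic orthogonal), and with only rational $c_i$ the right-hand expression is a fractional power of $\pm1$, hence ill-defined, and the argument collapses. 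The integrality you invoke would follow if $(\mb A\mid\mb B)$ extended to a matrix in $\Sp(2N,\Z)$ (then one can take $c=\mb D\,\delta f''+\mb C\,\delta f$), but the paper only cites the completion in $\Sp(2N,\BQ)$ in the body of the argument; the integral version is mentioned only as an observed assumption in an appendix, and is never used in the proof of this theorem. So you are smuggling in an unproved (though plausible) hypothesis.

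The paper's own proof of the flattening case avoids this entirely by choosing logarithms with $\mb A Z+\mb B Z''=i\pi\eta$ and computing directly. Using $\mb B$ invertible and the symmetry of $\mb B^{-1}\mb A$, the exponent of the ratio collapses to $i\pi(f''\cdot\tilde f - f\cdot\tilde f'')$, manifestly $i\pi$ times an integer, hence the ratio is $\pm1$. This is why the paper proves quad-type invariance \emph{first} (so that it may pass to a quad type with $\det\mb B\neq 0$), the opposite of your order. If you want to keep your ``flattening first'' ordering without assuming $\det\mb B\neq0$, you can still fix your argument by pairing in logarithms rather than exponentials: write $\delta f''\cdot Z-\delta f\cdot Z''=c^T(\mb A Z+\mb B Z'')=i\pi\,c^T\eta$ with rational $c$, then note
\be
c^T\eta=c^T(\mb A f+\mb B f'')=(\mb A^T c)\cdot f+(\mb B^T c)\cdot f''=\delta f''\cdot f-\delta f\cdot f''\in\Z\,,
\notag
\ee
so the exponent is $i\pi$ times an integer after all. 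This repair is exactly the missing step; as written, your version is incomplete.
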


We now consider the dependence of $\tau_{\CT}$ on the choice of a regular
ideal triangulation of $M$. It is well known that the set
$\calX$ of ideal triangulations of a cusped 
hyperbolic manifold is non-empty \cite{Ca} and connected by 2--3 moves 
\cite{Ma1,Ma2,Pi}. That is, a sequence of 2--3 moves can be used to take 
any one ideal triangulation to any other. The subset $\calX_{\rho_0}$
of $\calX$ of regular triangulations is also 
non-empty, see Section \ref{sec.extensions}. Topologically, these are the 
triangulations without any univalent edges 
\cite{Champanerkar, BDR-V, DnG, Till}. 
We will prove in Section \ref{sec.torsion} that

\begin{theorem}
\lbl{thm.1}
$\tau_{\CT}$ is constant on every component of $\calX_{\rho_0}$ connected by 2--3 moves.
\end{theorem}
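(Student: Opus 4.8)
The plan is to verify that $\tau_{\CT}$ is unchanged under a single 2--3 move; since by the cited results (Matveev--Piergallini) any two regular triangulations in the same component of $\calX_{\rho_0}$ are connected by a finite sequence of such moves (staying inside $\calX_{\rho_0}$ on that component), this suffices. So fix a 2--3 move replacing two tetrahedra $\Delta_1,\Delta_2$ glued along a common face by three tetrahedra $\Delta_1',\Delta_2',\Delta_3'$ sharing a common new edge $e$; write $\CT$ and $\CT'$ for the before/after triangulations, with shape vectors $z$ and $z'$. First I would record the local gluing relation between the old and new shapes: the five shapes involved satisfy the single pentagon/edge relation at $e$ (the Ptolemy-type identity), so that $z'$ is determined by $z$ (and there is one new edge equation, one new shape-triple, one new column and row in $\mb A,\mb B$). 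By Theorem~\ref{thm.indep} we are free to choose the most convenient quad types, edge, meridian, and flattening on each side, so I would pick quad types on $\Delta_i'$ adapted to $e$, keep the meridian disjoint from the tiny region where the move happens, and pick the flattening on $\CT'$ to restrict to the given one on $\CT$ away from the move with the new flattening coordinates fixed by the obvious combinatorial choice.

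The heart of the argument is then a purely linear-algebra computation: compare $\det(\mb A\Delta_{z''}+\mb B\Delta_z^{-1})$ for $\CT$ with the corresponding $(N+1)\times(N+1)$ determinant for $\CT'$, and the monomial prefactors $z^{f''}z''^{-f}$. I would set up the $(N+1)\times(N+1)$ Neumann-Zagier matrices $\mb A',\mb B'$ for $\CT'$ in block form, with the new edge equation for $e$ occupying the last row and the three new shape variables occupying the last columns, and all old edge equations modified only in those last columns (because away from $e$ the link of every old edge changes in a controlled, local way). The matrix $\mb A'\Delta_{z''}+\mb B'\Delta_z^{-1}$ then has a near-block-triangular structure; expanding the determinant along the new row/columns, one is left with the old determinant times an explicit $3\times 3$ (or smaller, after row reduction using the edge relation at $e$) factor built from the five local shapes. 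The claim to be checked is that this local factor exactly cancels the change in the monomial prefactor $z^{f''}z''^{-f}$, so that $\tau_{\CT'}=\tau_{\CT}$ up to sign.

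The main obstacle, and the step requiring genuine care rather than bookkeeping, is controlling how \emph{every} old edge equation changes under the move --- i.e.\ getting the precise entries of the last three columns of $\mb A',\mb B'$, including the edges of $\Delta_1,\Delta_2$ that are not the shared face but do touch the two old tetrahedra. One must use the specific combinatorics of the 2--3 move (which dihedral angles of the new tetrahedra sit at which old edge) together with the chosen quad types, and then show that the cofactor expansion collapses cleanly. A convenient way to organize this is to exploit the logarithmic/symplectic form of the Neumann-Zagier data: the matrix $\mb A\Delta_{z''}+\mb B\Delta_z^{-1}$ is (up to conjugation by diagonal matrices) the Jacobian of the gluing equations in log-shape coordinates, and a 2--3 move is a symplectic change of variables composed with integrating out one Gaussian variable, under which such a Jacobian-determinant transforms by exactly the local pentagon factor. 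I would make this precise by differentiating the pentagon relation, which gives the needed $3\times 3$ determinant identity directly, and then match it against the flattening-prefactor change, which is pinned down by the requirement that $\tau_{\CT}$ be flattening-independent (Theorem~\ref{thm.indep}) applied on both $\CT$ and $\CT'$. Once the local identity is in hand, the global statement follows by the connectivity of $\calX_{\rho_0}$ under 2--3 moves within each component.
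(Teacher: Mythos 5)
Your plan is essentially the paper's: reduce to a single 2--3 move, then split $\tau_\CT$ into the determinant $\det(\mb A\Delta_{z''}+\mb B\Delta_z^{-1})$ and the monomial $z^{f''}z''{}^{-f}$, and check that the two local changes cancel. The paper's Section~\ref{sec.inv23} does exactly this by writing the nine shape relations \eqref{eqs3}--\eqref{glue3} of the bipyramid, exhibiting the birational map \eqref{map23}, and then performing an explicit $4\times 4$-block column reduction to show $\det(\wt{\mb A}\Delta_{\tilde z''}+\wt{\mb B}\Delta_{\tilde z}^{-1})=-w_1^{-1}\det(\mb A\Delta_{z''}+\mb B\Delta_z^{-1})$, matched by $\wt z^{\tilde f''}\wt z''{}^{-\tilde f}=\pm w_1\,z^{f''}z''{}^{-f}$. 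Your proposal to obtain the $3\times 3$ determinant factor by differentiating the pentagon relation, viewing $\mb A\Delta_{z''}+\mb B\Delta_z^{-1}$ as (essentially) the Jacobian of the log gluing map and the 2--3 move as a symplectic substitution with one Gaussian integrated out, is a conceptually appealing alternative that is consistent with the state-integral picture of Section~\ref{sec.SIM}, though the paper opts for the direct matrix computation, which is cleaner to make rigorous.

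One step in your sketch needs tightening. You say the monomial-prefactor change is ``pinned down by'' Theorem~\ref{thm.indep}: that theorem only says $\tau$ is flattening-independent on each triangulation separately; it does \emph{not} tell you how the monomial transforms across a 2--3 move. To close the argument you must still exhibit a concrete pair of compatible flattenings on $\CT$ and $\wt\CT$ and compute. The paper does this by discretizing the six longitudinal shape relations \eqref{eqs6} to obtain the induced flattening \eqref{flat6} on $\CT$ from one on $\wt\CT$ (the central-edge constraint $d_1'+d_2'+d_3'=2$ ensuring consistency), which then makes the cancellation $w_1\cdot w_1^{-1}$ manifest. Without a choice like this, ``matching'' the two sides is not automatic from flattening-independence alone.
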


\subsection{Expectations}
\lbl{sub.ought}

We may pose some questions and conjectures about the 1-loop
invariant $\tau_{\CT}$ and the structure of the set $\calX_{\rho_0}$.
Let us begin with two questions whose answer is unfortunately
unknown.

\begin{question}
\lbl{que.can}
Is $\calX_{\rho_0}$ connected by 2--3 moves? 
\end{question}

\begin{question}
\lbl{que.constanttau}
Is $\tau_{\CT}$ constant on the set $\calX_{\rho_0}$?
\end{question}

\noindent Clearly, a positive answer to the first question implies a positive answer to the second.

Despite the unknown answer to the above questions, with additional effort we
can still define a distinguished component of $\calX_{\rho_0}$,
and thus obtain a topological invariant of $M$. Namely, let $\calX^{\EP}_M
\subset \calX_{\rho_0}$ denote the subset that consists of regular 
refinements of the canonical (Epstein-Penner) ideal cell decomposition of 
$M$ \cite{EP}. $\calX^{\EP}_M$ is canonically associated to a cusped
hyperbolic manifold $M$.
A detailed description of $\calX^{\EP}_M$ is given in \cite[Sec.6]{GHRS}.
$\calX^{\EP}_M$ is a finite set that generically consists
of a single element. In Section \ref{sub.EP} we will show the following.

\begin{proposition}
\lbl{prop.EP}
$\calX^{\EP}_M$ lies in a connected component of $\calX_{\rho_0}$.
Consequently, the value of $\tau_{\CT}$ on $\calX^{\EP}_M$ is a
topological invariant $\tau_M$ of $M$.
\end{proposition}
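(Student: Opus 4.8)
The plan is to reduce the statement to Theorem~\ref{thm.1}. That theorem says that $\tau_{\CT}$ is constant on each connected component of the graph whose vertices are the regular triangulations in $\calX_{\rho_0}$ and whose edges are 2--3 moves; so it suffices (i) to prove that the finite set $\calX^{\EP}_M$ lies in a single such component, and (ii) to note that the resulting common value $\tau_M\in E_M/\{\pm1\}$ is a topological invariant. Part (ii) is immediate: the Epstein--Penner decomposition $\mathcal{E}_M$, and with it the set $\calX^{\EP}_M$ of its regular refinements (\cf\ \cite[Sec.6]{GHRS}), is determined by the complete hyperbolic structure on $M$, which by Mostow--Prasad rigidity depends only on the homeomorphism type of $M$; and the invariant trace field $E_M$ is likewise a topological invariant. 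So the whole content lies in part (i): connecting any $\CT_1,\CT_2\in\calX^{\EP}_M$ by a path of 2--3 (and inverse 3--2) moves.

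I would carry out (i) cell by cell. A 2--3 or 3--2 move supported in the interior of a single 3-cell $C$ of $\mathcal{E}_M$ leaves the triangulations of all other cells untouched and preserves the property of being a refinement of $\mathcal{E}_M$; moreover any subdivision of the geometric decomposition $\mathcal{E}_M$ into ideal tetrahedra consists of non-degenerate tetrahedra, so such a move keeps us inside $\calX^{\EP}_M\subseteq\calX_{\rho_0}$. Hence it is enough to connect, for each of the finitely many cells $C$, the induced triangulations $\CT_1|_C$ and $\CT_2|_C$ of the ideal polytope $C$ by 2--3/3--2 moves performed inside $C$, and then to concatenate. This reduces part (i) to the purely combinatorial assertion that the triangulations of a convex $3$-dimensional polytope using only its vertices are connected by bistellar (2--3 and 3--2) flips, together with the geometric observation that each such flip, performed inside the geometric cell $C$, is realized by genuine non-degenerate ideal tetrahedra.

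The main obstacle is precisely this combinatorial-geometric step. On the geometric side one must ensure the relevant flips are \emph{available} --- that the triangular bipyramid underlying each 2--3 move is convex, so that the move produces three non-degenerate, positively oriented ideal tetrahedra rather than a flat or negatively oriented one --- and here the convexity of the canonical cells built into the Epstein--Penner construction is essential, so one should lean on the detailed description of $\calX^{\EP}_M$ in \cite[Sec.6]{GHRS}. On the combinatorial side, flip-connectivity of triangulations of a polytope is delicate in general, so one must exploit the special situation at hand, for instance by showing that every triangulation of $C$ reduces under flips to a fixed ``pulling'' triangulation associated with a chosen order on the vertices of $C$, and that any two pulling triangulations are flip-connected. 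Granting this, Theorem~\ref{thm.1} yields that $\tau_{\CT}$ is constant on $\calX^{\EP}_M$, and one sets $\tau_M$ equal to this common value.
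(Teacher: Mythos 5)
Your reduction to Theorem~\ref{thm.1}, your cell-by-cell strategy, and your observation that $\calX^{\EP}_M$ is a topological invariant by Mostow--Prasad rigidity are all in the spirit of the paper's argument. But the combinatorial lemma you lean on in step~(i) --- that any two triangulations of a convex $3$-polytope on its own vertex set are connected by $2$--$3$/$3$--$2$ flips --- is not a theorem; it is a well-known \emph{open problem}. The paper itself states this explicitly: for $d\geq 5$ the flip graph of all triangulations of a point configuration can be disconnected (it even has isolated vertices), and for $d=3$ connectivity is unknown. Your suggested route via pulling triangulations does not close this gap: connecting an \emph{arbitrary} triangulation to a pulling one by flips is essentially the open problem restated, not a reduction of it.

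The paper sidesteps this issue by building an extra hypothesis into the definition of $\calX^{\EP}_M$: its elements are required to be \emph{regular in the polytope sense} (i.e.\ induced by a convex height function), not merely arbitrary subdivisions of the Epstein--Penner cells into ideal tetrahedra. For \emph{regular} triangulations of a point configuration, the flip graph is known to be connected --- it is precisely the $1$-skeleton of the secondary polytope of Gelfand--Kapranov--Zelevinsky (see \cite{LRS}). In dimension~$3$ these geometric bistellar flips are exactly $2$--$3$ moves (allowing flat tetrahedra when the cell is not in general position), and since the Epstein--Penner cells have no univalent edges, the intermediate triangulations stay in $\calX_{\rho_0}$. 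This restriction to regular-in-the-polytope-sense triangulations is the key missing ingredient in your proposal; without it the crucial flip-connectivity step is unavailable.

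A secondary point: when arguing cell by cell you should address the compatibility of triangulations of the shared $2$-dimensional faces between adjacent Epstein--Penner cells --- a flip inside one cell may require a preparatory re-triangulation of a boundary polygon, which in turn affects the neighboring cell. This is handled in the detailed treatment of $\calX^{\EP}_M$ in \cite[Sec.6]{GHRS}, which the paper cites for exactly this purpose, but your sketch treats the cells as if they were independent.
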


\noindent Admittedly, it would be more natural to show that $\tau_\CT$ is constant on all of $\calX_{\rho_0}$.
Proposition \ref{prop.EP} appears 
to be an artificial way to construct a much needed topological invariant
of cusped hyperbolic 3-manifolds.

Our next conjecture compares our torsion $\tau_M$ with the 
nonabelian Reidemeister torsion $\tau^{\Reid}_M$ of $M$ with respect
to the meridian defined in \cite{Po,Db}. 
\begin{conjecture}
\lbl{conj.1loop}
For all hyperbolic knot complements we have
$\tau^{\Reid}_M=\pm \tau_{M}$.
\end{conjecture}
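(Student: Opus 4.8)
The plan is to prove the stronger, triangulation-free statement that $\tau_{\CT}=\pm\,\tau^{\Reid}_M$ for \emph{every} regular ideal triangulation $\CT$ of a one-cusped hyperbolic $M$. Because $\tau^{\Reid}_M$ is already a topological invariant, this implies Conjecture \ref{conj.1loop} at once through Proposition \ref{prop.EP}, and it would incidentally answer Question \ref{que.constanttau} in the affirmative. Moreover, by Theorem \ref{thm.1} it suffices to establish the equality for one conveniently chosen regular triangulation in each $2$--$3$ connected component of $\calX_{\rho_0}$, so the real content is a comparison of two torsions attached to a single fixed $\CT$.

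The first step is to recognise $\tau_{\CT}$ itself as a Reidemeister torsion. Linearising the logarithmic form of the edge and meridian gluing equations \eqref{NZintro} at the geometric solution $z$ produces the Neumann--Zagier Jacobian, whose determinant is, up to an explicit monomial in the $z_i,z_i',z_i''$, equal to $\det(\mb A\Delta_{z''}+\mb B\Delta_z^{-1})$; this monomial ambiguity is precisely what the combinatorial flattening $f$ is designed to remove, so that $\pm\tfrac12\det(\mb A\Delta_{z''}+\mb B\Delta_z^{-1})\,z^{f''}z''^{-f}$ of Definition \ref{def.1loopdef} is a genuine invariant. The Neumann--Zagier symplectic relations among $\mb A$ and $\mb B$ --- integrality, $\mb A\mb B^{T}=\mb B\mb A^{T}$, and the edge/tetrahedron bookkeeping of Section \ref{sec.triang} --- together with the infinitesimal rigidity of the discrete faithful representation $\rho_0$, guarantee that this Jacobian is the central (and here invertible) differential of the acyclic \emph{reduced deformation complex} of the gluing equations,
\[
\cdots\longrightarrow \C^{N}\xrightarrow{\ \mb A\Delta_{z''}+\mb B\Delta_z^{-1}\ }\C^{N}\longrightarrow\cdots
\]
so that $\tau_{\CT}$ is exactly the torsion of this complex in its tetrahedron-and-edge bases. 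Theorem \ref{thm.indep} is, from this vantage point, the statement that the torsion is insensitive to the auxiliary choices, and requires only the linear algebra of Section \ref{sec.triang}.

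Next I would identify this reduced deformation complex, \emph{as a torsion}, with the $\mathrm{Ad}\,\rho_0$-twisted complex of the pair $(M,\partial M)$, closed up along the chosen meridian, which by definition computes $\tau^{\Reid}_M$ in the normalisation of \cite{Po,Db}. The bridge is the truncated-tetrahedron CW model of $M$: each truncated ideal tetrahedron contributes a copy of $\mathfrak{sl}(2,\C)$, naturally identified with its space of shape deformations; gluing faces imposes exactly the edge relations encoded by $\mb A$ and $\mb B$; and parabolicity at the cusp together with the chosen meridian loop supplies the remaining constraint. One then checks, tetrahedron by tetrahedron, that the preferred geometric bases on the combinatorial and topological sides differ by a base change of determinant $\pm 1$, whence the two torsions coincide up to sign. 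Because the meridian in the Neumann--Zagier datum is exactly the curve with respect to which $\tau^{\Reid}_M$ is defined, the boundary data match with no extra factor --- which is why the $\tfrac12$ in \eqref{torintro} is the correct normalisation. Combining with the previous paragraph gives $\tau_{\CT}=\pm\,\tau^{\Reid}_M$.

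The main obstacle I expect lies in this last identification for \emph{general} regular $\CT$: when some tetrahedra are flat or negatively oriented the truncated-tetrahedron model of $M$ degenerates and the tetrahedron-by-tetrahedron base comparison loses its literal meaning. The natural remedy is to prove instead, directly from the formula of Definition \ref{def.1loopdef}, that $\tau_{\CT}$ is invariant under a single $2$--$3$ move --- a determinant identity for $\mb A,\mb B$ and the shapes --- and then to invoke Theorem \ref{thm.1} to slide $\CT$ to a geometric triangulation on which the model is honest; but since it is not known that every cusped $M$ carries a regular \emph{geometric} triangulation with transparent edge and meridian combinatorics, one is in effect forced to route the argument through Proposition \ref{prop.EP} rather than a direct geometric computation. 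Two further points demanding care are the exact sign (Turaev's sign-refined torsion versus the intrinsic $\pm$-ambiguity of $\tau_{\CT}$) and the contribution of $H_1(\partial M;\Z)$ to the meridian-refined torsion; I expect both to be routine once the chain-level isomorphism is in hand, but they are where sign and bookkeeping errors are most likely to creep in.
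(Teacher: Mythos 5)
The statement you are addressing is labelled a \emph{conjecture} in the paper, not a theorem: the authors offer only numerical evidence (agreement to 1000 digits for all 59924 hyperbolic knots with at most 14 crossings, via Dunfield's {\tt hypertorsion} package) and explicitly remark, after observing that $\tau_M$ and $\tau^{\Reid}_M$ are determinants of matrices of sizes $N$ and $3r-3$ respectively, that ``it is still unclear to us how to relate these two matrices or their determinants.'' There is therefore no paper proof to compare against, and your proposal should be judged on its own terms.

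On those terms, the argument has a genuine gap precisely at its load-bearing step. Your paragraph two asserts that $\tau_\CT$ is the torsion of a two-term ``reduced deformation complex'' built from the Neumann--Zagier Jacobian, and your paragraph three asserts that this complex is chain-isomorphic to the $\mathrm{Ad}\,\rho_0$-twisted complex of $(M,\partial M)$ computing $\tau^{\Reid}_M$, with ``the preferred geometric bases on the combinatorial and topological sides differ[ing] by a base change of determinant $\pm1$.'' Neither assertion is established: you do not construct the claimed chain map from the truncated-tetrahedron CW complex to the gluing-variety linearisation, you do not prove acyclicity of the combinatorial complex in the relevant bases, and the advertised ``tetrahedron by tetrahedron'' base comparison --- the step that would actually pin down the monomial $z^{f''}z''^{-f}$, the factor $\tfrac12$, and the sign --- is never carried out. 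This is not a routine verification you can defer; it is the mathematical content of the conjecture, and it is exactly the comparison the authors say they do not know how to make. The remainder of your strategy (using Theorem \ref{thm.1} and Proposition \ref{prop.EP} to reduce to a single well-chosen $\CT$, and the caveats about flat tetrahedra, Turaev sign-refinement, and $H_1(\partial M;\Z)$) is a sensible framing, but without the chain-level identification it reduces the conjecture to an equivalent unproved statement rather than proving it.

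A second, smaller issue: you claim the flattening monomial ``is precisely what the combinatorial flattening $f$ is designed to remove.'' In the paper the flattening enters because the state integral's quadratic prefactor evaluated at the critical point produces $(z^{f''}z''^{-f})^{-1/2}$ (Section \ref{sec.torsion2}), and flattening-independence of the resulting product is then a separate theorem (part of Theorem \ref{thm.indep}, proved in Section \ref{sec.invflat} using the full gluing equations). Presenting the monomial as a tautological normalisation built into a torsion basis is an additional unproved claim, not a reformulation.
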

\noindent 
Numerical evidence for the above conjecture is presented in
Appendix \ref{app.compute} using Dunfield's computation of $\tau^{\Reid}_M$
via {\tt SnapPy} \cite{DFJ}. Observe that both sides of the equation in
Conjecture \ref{conj.1loop} are algebraic numbers (defined up to a sign) 
that are elements of the invariant trace field of $M$. Moreover, if $M$ has
a regular ideal triangulation with $N$ ideal tetrahedra and its 
fundamental group is generated with $r$ elements, then $\tau_M$ and
$\tau^{\Reid}_M$ are essentially given by the determinant of square matrices
of size $N$ and $3 r-3$, respectively. It is still unclear to us
how to relate these two matrices or their determinants.

By definition, $\tau^{\Reid}_M \in E_M^*$. Thus, a mild but important 
corollary of Conjecture \ref{conj.1loop} is that $\tau_M$ is nonzero. 
This is a crucial ingredient, necessary for the definition of the higher
loop invariants $S_{M,n}$ using perturbation theory.

\subsection{The higher-loop invariants}
\lbl{sub.nloop}

In this section we define the higher loop invariants $S_{\CT,n}$ for $n \geq 2$. They are analyzed in detail in Section \ref{sec.SIM}, using a state integral \eqref{SIM}. The result, however, may be summarized as follows.
Let us introduce a formal power series 
\be
\lbl{eq.psixz}
\psi_{\hb}(x;z)=\exp\left(
\sum_{n,\, k,\, 2n+k-2 > 0} 
\frac{\hb^{n+\tfrac k2-1} (-x)^kB_n}{n!k!}\, \Li_{2-n-k}(z^{-1})
\right) \in \BQ(z)[\![x,\hbar^{\frac12}]\!]
\ee
where $B_n$ is the $n^{\rm th}$ {\em Bernoulli number} (with $B_1=+1/2$),
and $L_m(z)$ is the $m^{\rm th}$ polylogarithm. Note that 
$\Li_m(z) \in (1-z)^{-m-1}\BZ[z]$ is a rational function for all nonpositive integers $m$. This formal series comes from the asymptotic expansion of the 
quantum dilogarithm function after removal of its two leading asymptotic
terms \cite{Ba, FK-QDL, Fa}. The quantum dilogarithm is 
the Chern-Simons partition function of a single tetrahedron and its asymptotics
are studied in detail in Section \ref{sec.SIM}.

We fix an enhanced Neumann-Zagier datum $\widehat 
\beta_{\calT}=(z,\mb A, \mb B,f)$
of an oriented 1-cusped manifold $M$ and a regular ideal triangulation
$\calT$ with $N$ tetrahedra. Let $\eta=\mb A f + \mb B f''$. We assume that 
\be
\det(\mb B) \neq 0\,,\qquad  \tau_M \neq 0\,. \notag
\ee
The condition  $\det(\mb B) \neq 0$ is always satisfied with a suitable 
labeling of shapes; see Lemma \ref{lemma.B}. In that case,
Lemma \ref{lemma.BA} implies that 
\be
\CH=-\mb B^{-1}\mb A+\Delta_{z'}\,,
\ee
is a symmetric matrix, where 
$\Delta_{z'}=\diag(z_1',...,z_N')$. 
We define
\be
f_{\calT,\hb}(x;z)=
\exp \bigg(-\frac{\hbar^{\frac12}}{2} x^T \mb B^{-1} \eta +\frac\hbar8 f^T\mb B^{-1}\mb A f \bigg) 
\prod_{i=1}^N \psi_{\hb}(x_i,z_i) \in \BQ(z)[\![x,\hbar^{\frac12}]\!]\,,
\quad
\ee
where $x=(x_1,...,x_N)^T$ and $z=(z_1,...,z_N)$.
Assuming that $\CH$ is invertible,
a formal power series $f_\hbar(x)\in \BQ(z)[\![x,\hbar^{\frac12}]\!]$ has a {\em formal Gaussian integration}, given by (\cf\ \cite{BIZ})
\be \label{formalG}
\la f_{\hb}(x)\ra = \frac{\int dx \, e^{-\frac12x^T \CH \,x}f_{\hb}(x)}{\int dx \, e^{-\frac12x^T \CH \,x}}\,.
\ee
This integration is defined by expanding $f_\hbar(x)$ as a series in $x$, and then formally integrating each monomial, using the quadratic form $\CH^{-1}$ to contract $x$-indices pairwise. 

\begin{definition}
\lbl{def.Sn}
With the above conventions, we define
\be
\exp\left(\sum_{n=2}^\infty S_{\calT,n}(z) \hb^{n-1}\right)
:=
\la f_{\calT,\hb}(x;z)
\ra \,.
\ee
\end{definition}

\begin{remark}
\lbl{rem.tau3n}
Notice that the result involves only integral powers of $\hbar$
and each term is a rational function in the complex numbers $z$.
Moreover, $S_{\calT,n} \in \tau_{\CT}^{-3n+3} \BQ[z,z',z'']$ 
for all $n \geq 2$. This follows from the fact that the connected Feynman
diagrams that contribute to $S_{\CT,n}$ have at most $3n-3$ edges and each
edge (contracted by $\CH^{-1}$) contributes a factor of $\det(\CH)^{-1}$. 
Thus, we can also write
\be \label{tordivide}
\exp\left(\sum_{n=2}^\infty S_{\calT,n} \hb^{n-1}\right)=
1+ \sum_{n=1}^\infty  \frac{\wt S_{\calT,n}}{\tau_{\CT}^{3n}}\hb^n \,, 
\ee
where $\wt S_{\CT,n} \in \BQ[z,z',z'']$. Experimentally, it appears that
$\wt S_{\calT,n}$ have lower complexity than $S_{\calT,n}$, see Appendix 
\ref{app.compute}.
\end{remark}

\subsection{Feynman diagrams}
\lbl{sub.feynman2}

A convenient way to organize the above definition is via Feynman diagrams, 
using Wick's theorem to express each term $S_{\CT,n}$ as a finite sum of 
connected diagrams with at most $n$ loops, where the number of loops of
a connected graph is its first betti number. This is well known and 
explained in detail, \eg\ in \cite[Ch.9]{MirrorSym}, \cite{BIZ,Polyak}.

The Feynman rules for computing the $S_{\CT,n}$, described in Section \ref{sec.SIM}, turn out to be the following.%
\footnote{To derive these from \eqref{formalG}, one should first rescale $x\to \hbar^{-\frac12}x$.} %
One draws all connected graphs $D$ with vertices of 
all valencies, such that
\be 
\lbl{eq.LD} 
L(D) := (\text{\# 1-vertices + \# 2-vertices + \# loops}) \;\leq n\,. 
\ee
In each diagram, the edges represent an $N\times N$ propagator
\be \text{propagator}\; :\quad \Pi=\hbar\,\CH^{-1}\,,
\ee
while each $k$-vertex comes with an $N$-vector of factors $\Gamma^{(k)}_i$,
\be 
\label{vertexintro}  
\Gamma^{(k)}_i = (-1)^k\sum_{p\,
=\,\alpha_k}^{\alpha_k +n-L(D)}\frac{\hbar^{p-1}B_p}{p!}\Li_{2-p-k}(z_i^{-1}) 
+ \begin{cases} -\tfrac12(\mb B^{-1}\eta)_i & k= 1 \\ \;\;0 & k \geq 2 
\end{cases}\,,
\ee
where $\alpha_k = 1$ (resp., $0$) if $k=1,2$ (resp., $k\geq 3$).
The diagram $D$ is then evaluated by contracting the \emph{vertex factors} $\Gamma^{(k)}_i$ with 
propagators, multiplying by a standard {\em symmetry factor}, and taking the 
$\hbar^{n-1}$ part of the answer. In the end, $S_{M,n}$ is the sum of 
evaluated diagrams, plus an additional {\em vacuum} contribution
\be 
\label{vacintro} 
\Gamma^{(0)} = \frac{B_n}{n!}\sum_{i=1}^N\Li_{2-n}(z_i^{-1}) 
+ \begin{cases} \tfrac18f\cdot\mb B^{-1}\mb Af & n=2 \\ 0 & n\geq 3 
\end{cases}\,.
\ee
To illustrate the above algorithm, we give the explicit formulas
for $S_2$ and $S_3$ below.

\subsection{The $2$-loop invariant}
\lbl{sub.2loop}

The six diagrams that contribute to $S_{M,2}$ are shown in Figure 
\ref{fig.diags2intro}, together with their symmetry factors.

\begin{figure}[htb]
\centering
\includegraphics[width=4in]{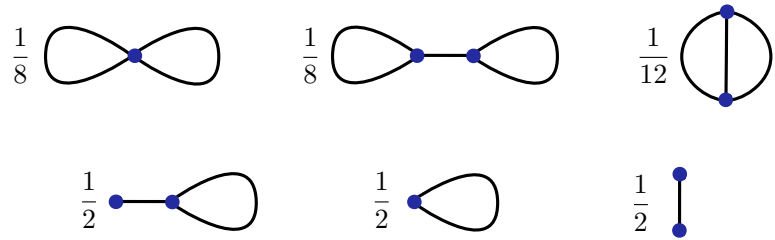}
\caption{Diagrams contributing to $S_{M,2}$ with symmetry factors. The top 
row of diagrams have exactly two loops, while the bottom row have fewer 
loops and additional $1$-vertices and $2$-vertices.}
\label{fig.diags2intro}
\end{figure}

\noindent Their evaluation gives the following formula for $S_{\CT,2}$:
\begin{align} \label{2loopexplicit}
S_{\CT,2} &= \coeff\left[ 
\frac18  \Gamma^{(4)}_i (\Pi_{ii})^2 
+ \frac18\Pi_{ii}\Gamma^{(3)}_i\Pi_{ij}\Gamma^{(3)}_j\Pi_{jj} 
+ \frac{1}{12}\Gamma^{(3)}_i(\Pi_{ij})^3\Gamma^{(3)}_j \right. \\
&\hspace{1in}
\left. + \frac12 \Gamma^{(1)}_i\Pi_{ij}\Gamma^{(3)}_j\Pi_{jj} 
+ \frac12 \Gamma^{(2)}_i\Pi_{ii} 
+ \frac12\Gamma^{(1)}_i\Pi_{ij}\Gamma^{(1)}_j ,\; \hbar\right]+\Gamma^{(0)}\,, \notag
\end{align}
where all the indices $i$ and $j$ are implicitly summed from $1$ to $N$ 
and $\coeff[f(\hb),\,\hb]$ denotes the coefficient of $\hb$ of a power series
$f(\hb)$. Concretely, the 2-loop contribution from the vacuum energy is 
$\Gamma^{(0)}=\frac{1}{8}f^T \mb B^{-1}\mb A f-\frac1{12}\sum_iz_i'$, and
the four vertices that appear only contribute at leading order,
\be 
\Gamma^{(1)}_i = \frac{z_i'-(\mb B^{-1}\eta)_i}{2}\,,
\qquad \Gamma^{(2)}_i = \frac{z_iz_i'^2}{2}\,,
\qquad \Gamma^{(3)}_i = -\frac{z_iz_i'^2}{\hbar}\,,
\qquad \Gamma^{(4)}_i = -\frac{z_i(1+z_i)z_i'^3}{\hbar}\,.
\ee
We expect $S_{\CT,2}$ to be well defined modulo $\Z/24$, and this is exactly
what happens in hundreds of examples that we computed.

\subsection{The $3$-loop invariant}
\lbl{sub.3loop}

For the next invariant $S_{\CT,3}$, all the diagrams of Figure 
\ref{fig.diags2intro} contribute, collecting the coefficient
of $\hb^2$ of their evaluation. In addition, there are $34$ new diagrams 
that satisfy 
the inequality \eqref{eq.LD}; they are shown in Figures \ref{fig.diags3} 
and \ref{fig.diags3dot}. Calculations indicate that the 3-loop invariant 
$S_{\CT,3}$ is well defined, independent of the regular triangulation $\CT$.
The invariants $S_{\CT,0},\tau_{\CT},S_{\CT,2},S_{\CT,3}$ have been programmed in 
{\tt Mathematica} and take as input a Neumann-Zagier datum available
from {\tt SnapPy} \cite{SnapPy}.

For the 4-loop invariant, there are $291$ new diagrams. A {\tt python}
implementation will be provided in the future. For large $n$, one expects about
$n!^2 C^n$ diagrams to contribute to $S_n$. 

\begin{remark}
\lbl{rem.noflattening}
Note that the $n$-loop invariant for $n \geq 3$ is independent of the 
combinatorial flattening and in fact depends only on 
$(\mb B^{-1} \mb A, \mb B^{-1} \eta,z)$.
\end{remark}

\begin{figure}[htpb]
\centering
\includegraphics[width=6.5in]{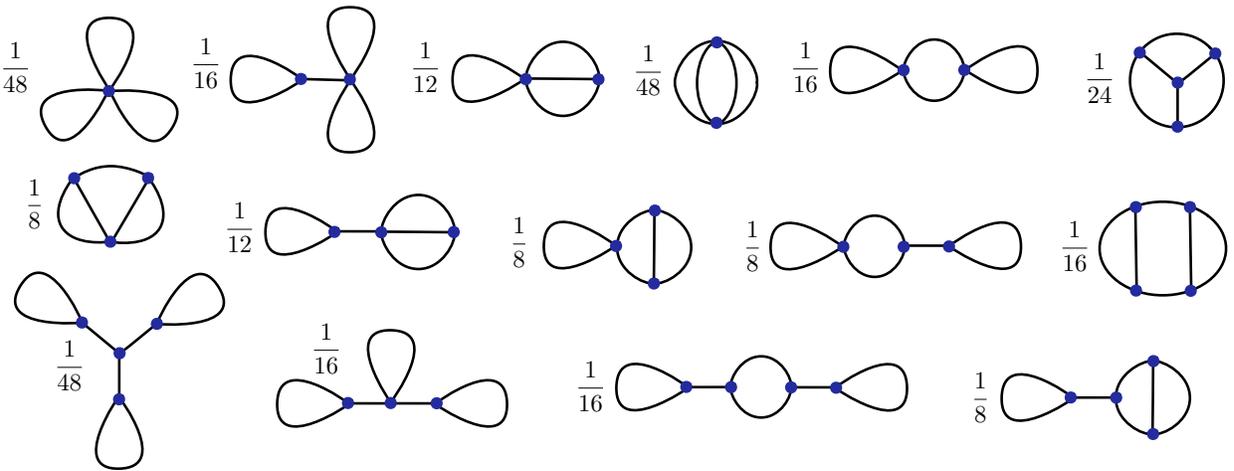}
\caption{Diagrams with three loops contributing to $S_3$.}
\lbl{fig.diags3}
\end{figure}

\begin{figure}[htpb]
\centering
\includegraphics[width=6.5in]{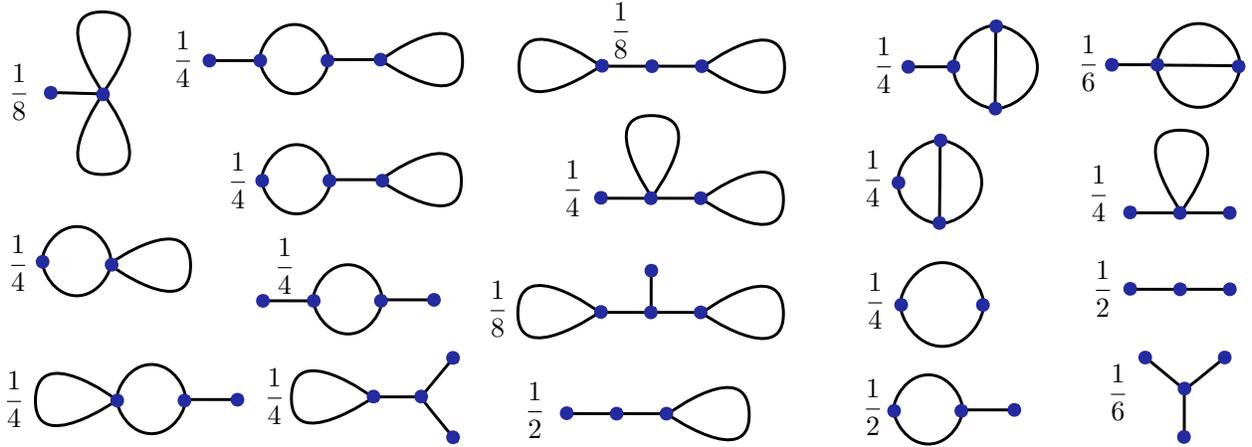}
\caption{Diagrams with $1$-vertices and $2$-vertices 
contributing to $S_3$.}
\lbl{fig.diags3dot}
\end{figure}

\subsection{Generalizations}
\lbl{sec.genintro}

There are several natural extensions of the results presented above.
First, one could attempt to prove the independence of the all-loop invariants $Z_{\CT}(\hb)$, including the entire series of $S_{\CT,n}$'s, under 2--3 moves and different choices of Neumann-Zagier datum. This was done non-rigorously in \cite{D1}, but a full mathematical argument in the spirit of Theorems \ref{thm.indep} and \ref{thm.1} is still missing. We hope to address this in future work.

In a different direction, one can extend the formulas for $\tau_\CT$ and $S_{\CT,n}$ to
\begin{itemize}
\item[--] manifolds with multiple cusps,
\item[--] representations other than the discrete faithful,
\item[--] representations with non-parabolic meridian holonomy,
\item[--] non-hyperbolic manifolds.
\end{itemize}
The only truly necessary condition is that a 3-manifold $M$ have a topological ideal triangulation $\CT$ that --- upon solving gluing equations and using a developing map --- reproduces some desired representation $\rho:\pi_1(M)\to \PSL(2,\C)$. We call such an ideal triangulation $\rho$-regular, and in Section \ref{sec.extensions} we will briefly discuss most of the above generalizations. In particular, we will demonstrate in Sections \ref{sec.utor41} and \ref{sec.uSIM} how to extend $\tau_\CT,\,S_{\CT,n}$ to rational functions on the character variety of a (topologically) cusped manifold. The generalization to multiple cusps is also quite straightforward, but left out mainly for simplicity of exposition.


\subsection*{Acknowledgment}
The authors wish to thank Nathan Dunfield, Walter Neumann, Jose\-phine Yu
and Don Zagier for many extremely enlightening conversations.


\section{Mechanics of triangulations}
\lbl{sec.triang}

We begin by reviewing the gluing rules for ideal hyperbolic tetrahedra and 
the equations that determine their shape parameters. We essentially follow 
the classic \cite{Th, NZ}, but find it helpful to work with 
additive logarithmic (rather than multiplicative) forms of the gluing  equations.
Recall that all manifolds and all ideal triangulations are {\em oriented}.

\subsection{Ideal tetrahedra}
\lbl{sec.tet}

Combinatorially, an \emph{oriented ideal tetrahedron} $\Delta$ is a topological ideal tetrahedron with three complex \emph{shape parameters} $(z,z',z'')$ assigned to pairs of opposite edges (Figure \ref{fig.tet}). The shapes always appear in the same cyclic order (determined by the orientation) around every vertex, and they satisfy

\begin{subequations} \lbl{tetexp}
\be zz'z'' = -1\,, \hspace{0in}\lbl{tetexpP} \ee
and
\be z''+z^{-1}-1=0\,. \hspace{0in}\lbl{tetLag} \ee
\end{subequations}
In other words, $z'=1/(1-z)$ and $z''=1-z^{-1}$. We call the tetrahedron \emph{non-degenerate} if none of the shapes take values in $\{0,1,\infty\}$, \ie, $z,z',z''\in \C^*\backslash\{1\}$. It is sufficient to impose this on a single one 
of the shapes.

\begin{figure}[htb]
\includegraphics[width=2in]{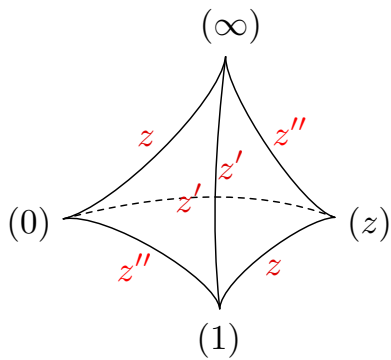}
\caption{An ideal tetrahedron
}
\lbl{fig.tet}
\end{figure}

Borrowing common terminology from the theory of normal surfaces, \cf\ 
\cite{Bu,Kang,KR,Till2}, 
we define the \emph{quadrilateral type} (in short, \emph{quad type}) of $\Delta$ to be the distinguished pair of opposite edges labelled by $z$. Clearly, there is a three-fold choice of quad type for any oriented ideal tetrahedron. Different choices correspond to a cyclic permutation of the vector $(z,z,',z'')$,
which leaves relations \eqref{tetexp} invariant.

Geometrically, the shape parameters determine a $\PSL(2,\C)$ structure on $\Delta$. Equivalently, they determine a hyperbolic structure, possibly of negative volume. We can then describe the ideal hyperbolic tetrahedron $\Delta$ as the convex hull of four ideal points in hyperbolic three-space $\mathbb H^3$, whose cross-ratio is $z$ (or $z'$, or $z''$). 
Each shape $z$ fixes the complexified dihedral angle on 
the edge it labels, via
\be z = \exp(\text{torsion \!+\! $i$\,angle})\,,
\ee
and similarly for  $z'$, $z''$.
%
%


\subsection{The gluing matrices}
\lbl{sec.glue}

We now discuss an important combinatorial invariant of ideal triangulations,
namely the gluing and Neumann-Zagier matrices, their symplectic
properties, and the notion of a combinatorial flattening. Although these
notions are motivated by hyperbolic geometry (namely the gluing of ideal
tetrahedra around their faces and edges to describe a complete hyperbolic
structure on a cusped manifold), we stress that these notions make sense
for arbitrary 3-manifolds with torus boundary, and for triangulations 
whose gluing equations may not have solutions in $\BC\setminus\{0,1\}$.

Let $M$ be an oriented one-cusped manifold with an ideal 
triangulation $\calT=\{\Delta_i\}_{i=1}^N$ and a choice of 
quad type. 

The choice of quad, combined with the orientation of $\CT$ and $M$ allow us
to attach variables $(Z_i,Z_i',Z_i'')$ to each tetrahedron $\Delta_i$.
An Euler characteristic argument shows that the triangulation has 
$N$ edges $E_I$, $I=1,...,N$. For each edge $E_I$ we introduce a gluing 
equation of the form
\be E_I:\quad \sum_{i=1}^N \big(\mb{G}_{Ii}Z_i+\mb{G}_{Ii}'Z_i'
+\mb{G}_{Ii}''Z_i''\big) = 2\pi i\,\qquad I=1,...,N\,, \lbl{glue} \ee
where $\mb{G}_{Ii} \in \{0,1,2\}$ (resp., $\mb{G}_{Ii}'$, $\mb{G}_{Ii}''$) is 
the number of times an edge of tetrahedron $\Delta_i$ with parameter $Z_i$ 
(resp., $Z_i'$, $Z_i''$) is incident to the edge $E_I$ in the triangulation. 
In addition, we impose the equations
\be 
Z_i + Z_i'+Z_i''=i\pi \label{tetP}\,.
\ee
for $i=1,\dots,N$. The Equations \eqref{glue} are not all independent. 
For a one-cusped 
manifold, every edge begins and ends at the cusp, which implies 
$\sum_{I=1}^N \mb G_{Ii}=\sum_{I=1}^N \mb G_{Ii}'=\sum_{I=1}^N \mb G_{Ii}''=2$, and 
therefore that the sum of the left-hand sides of Equations \eqref{glue} 
equals $2\pi i N$. This is the only linear dependence in case of
one cusp. In general, there is one relation per cusp of $M$, 
as follows from \cite[Thm.4.1]{Neumann-combi}.

An oriented peripheral simple closed curve $\mu$ (such as a meridian) on the 
boundary of $M$, 
in general position with the triangulation of the boundary torus that
comes from $\CT$, also gives rise to a gluing equation. We assume that the curve is simple (has no self intersections), and set the signed sum of edge parameters on the dihedral angles subtended by the curve to zero. A parameter is counted with a plus sign (resp. minus sign) if an angle is subtended in a counterclockwise (resp. clockwise) direction as viewed from the boundary. These rules are the same as described in \cite{Neumann-combi}, and demonstrated in Section \ref{sec.41triang} below.

Let us choose such a peripheral curve $\mu$. We choose a meridian if $M$ is a knot complement. The gluing equation associated to $\mu$ then takes the form
\be \mu:\quad  \sum_{i=1}^N \big( \mb{G}_{N+1,i} Z_i+\mb{G}_{N+1,i}' Z_i'
+\mb{G}_{N+1,i}''Z_i'' \big) = 0\,,  \lbl{merid} \ee
with $\mb{G}_{N+1,i},\,\mb{G}_{N+1,i}',\,\mb{G}_{N+1,i}''\in \Z$. 

\subsection{The Neumann-Zagier matrices}
\lbl{sec.NZ}

The matrices $\mb G$, $\mb G'$ and $\mb G''$ 
have both symmetry and  
redundancy. We have already observed that any one of the edge 
constraints \eqref{glue} can be removed. Let us then ignore the  
edge $I=N$. We can also use \eqref{tetP} to eliminate one of the three 
shapes for each tetrahedron. We choose this canonically to be $Z_i'$,
though which pair of edges is labelled $Z_i'$ depends on the choice of 
quad type for the tetrahedron. Then the first $N-1$ edge equations 
and the meridian ($\mu$) equation are equivalent to
\be 
\sum_{j=1}^N \big(\mb A_{ij}Z_j+\mb B_{ij}Z_j''\big) = i\pi\, \eta_i\,,
\qquad i=1,...,N\,, 
\lbl{NZlin} 
\ee
where 
\be
 \mb A_{ij} = \left\{\begin{array}{c@{\quad}l}
  \mb G_{ij}-\mb G_{ij}' & I\neq N \\[.2cm]
    \mb G_{N+1,j}-\mb G_{N+1,j}' & i= N
  \end{array}\right. \qquad
  \mb B_{ij} = \left\{\begin{array}{c@{\quad}l}
  \mb G_{ij}''-\mb G_{ij}' & I\neq N \\[.2cm]
    \mb G_{N+1,j}''-\mb G_{N+1,j}' & i= N\,;
  \end{array}\right. \qquad
\ee
and
\be \eta_i := \left\{\begin{array}{c@{\quad}l}
  2-\sum_{j=1}^N \mb G_{ij}' & i\neq N \\[.2cm]
    -\sum_{j=1}^N \mb G_{N+1,j}' & i= N\,.
  \end{array}\right.
\lbl{defn}
\ee
We will generally assume $Z$, $Z''$, and $\eta$ to be column vectors, 
and write %
$ \mb A\, Z+\mb B \,Z''=i\pi \eta\,.$
The matrices $(\mb G,\mb G',\mb G'')$ as well as
$(\mb A, \mb B,\eta)$ can easily be obtained from \texttt{SnapPy} 
\cite{SnapPy}, as is illustrated in Appendix \ref{app.compute}.

The Neumann-Zagier matrices $\mb A$ and $\mb B$ have a remarkable property: 
they are the top 
two blocks of a $2N\times 2N$ symplectic matrix \cite{NZ}. It follows that
\be \mb A\mb B^T = \mb B\mb A^T \,, 
\lbl{NZsymp} \ee
and that the $N\times 2N$ block $(\mb A\;\mb B)$ has full rank. This 
symplectic property is crucial for defining the state integral of \cite{D1},
for defining our formal power series invariant $\CZ_M(\hbar)$, and for the 
combinatorial proofs of topological invariance of the $1$-loop invariant.
A detailed discussion of the symplectic properties of the Neumann-Zagier
matrices $\mb A, \mb B$ is given in Appendix \ref{app.symp}.

\subsection{Combinatorial flattenings}
\lbl{sub.flat}

We now have all ingredients to define what is a combinatorial flattening.

\begin{definition}
\lbl{def.flattening}
Given an ideal triangulation $\calT$ of $M$, a {\em combinatorial flattening} 
is a collection of $3N$ integers $(f_i,f_i',f_i'') \in \BZ^3$ for $i=1,\dots,N$
that satisfy
\begin{subequations}
\be f_i+f_i'+f_i'' = 1\,,\qquad i=1,...,N\,, \lbl{flattrip} \ee
\be \label{flatG} 
\sum_{i=1}^N \big(\mb{G}_{Ii}f_i+\mb{G}_{Ii}'f_i'+\mb{G}_{Ii}''f_i''\big) 
= \begin{cases} 2 & I=1,...,N \\
0 & I=N+1 \end{cases} \,.
\ee
\end{subequations}
\end{definition}
Note that if we eliminate $f'$ using Equation \eqref{flattrip}, a flattening
is a pair of vectors $(f,f'') \in \BZ^{2N}$ that satisfies
\be 
\label{flatAB} \mb A f+\mb B f'' = \eta\,. 
\ee
Evidently, Equation \eqref{flatAB} is a system of linear Diophantine equations.
Neumann proved in \cite[Lem.6.1]{Neumann-combi} that every ideal triangulation
$\CT$ has a flattening.

\begin{remark}
\lbl{rem.Nflattening}
Combinatorial flattenings should not be confused with the (geometric)
flattenings of \cite[Defn.3.1]{N}. The latter flattenings are
coherent choices of logarithms for the shape parameters $z,z',z''$ of a 
complex solution to the gluing equations. On the other hand, our 
combinatorial flattenings are independent of a solution to the gluing 
equations. In the rest of the paper, the term {\em flattening} will mean
a {\em combinatorial flattening} in the sense of Definition 
\ref{def.flattening}.
\end{remark}

\subsection{The shape solutions to the gluing equations}
\lbl{sub.shapesol}

If we exponentiate the equations \eqref{tetP}, and set 
$(z_i,z_i',z_i'')=(e^{Z_i}, e^{Z_i'},e^{Z_i''})$, we obtain that 
$(z_i,z_i',z_i'')$ satisfy Equation \eqref{tetexp}. If we combine the
exponentiated equations \eqref{NZlin} with the 
nonlinear relation \eqref{tetLag} for each tetrahedron, we obtain
\be z^{\mb A}z''^{\mb B} = z^{\mb A}(1-z^{-1})^{\mb B} = (-1)^\eta\,, 
\lbl{glueeqs} \ee
where $z^{\mb A} := \prod_j z_j^{\mb A_{ij}}$. These $N$ equations in $N$ 
variables are just the gluing equations of Thurston \cite{Th} and Neumann 
and Zagier \cite{NZ}, and fully capture the constraints imposed by the 
gluing. For hyperbolic 
$M$, a triangulation $\CT$ is regular precisely when one of the solutions to 
\eqref{glueeqs} corresponds to the complete hyperbolic structure.

\subsection{Example: $\mb{4_1}$}
\lbl{sec.41triang}

As an example, we describe the enhanced Neumann-Zagier datum of 
the figure-eight knot complement $M$. It has a well known regular 
ideal triangulation $\CT$ consisting of $N=2$ tetrahedra, to which we assign logarithmic shape parameters $(Z,Z',Z'')$ and $(W,W',W'')$.

\begin{figure}[htb]
\centering
\includegraphics[width=4.5in]{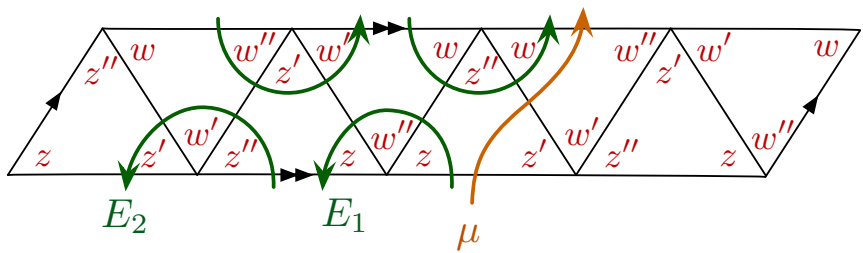}
\caption{The boundary of the cusp neighborhood for the figure-eight knot.}
\lbl{fig.41cusp}
\end{figure}

A map of the boundary of the cusp neighborhood is shown in Figure 
\ref{fig.41cusp}. We have chosen one of $3^2$ possible cyclic labelings by 
$Z$'s and $W$'s (\ie\ one of $3^2$ possible quad types). Each of the edges intersects the cusp twice, so it 
is easy to read off from Figure \ref{fig.41cusp} that the edge 
constraints \eqref{glue} are
\begin{align*} E_1:&\quad 2Z+ Z''+2W+W'' = 2\pi i \\
 E_2:&\quad 2Z'+Z''+2W'+W'' = 2\pi i\,.
\end{align*}
The sum of the left-hand sides is automatically $4\pi i$, so we can choose to ignore the
second constraint. If we 
choose the meridian path $\mu$ as in Figure \ref{fig.41cusp}, the meridian
constraint \eqref{merid} is
\be \mu:\quad -Z'+W = 0 \notag 
\ee
Putting together the first edge constraint and the meridian into 
matrices, we have
\be \begin{pmatrix} 2 & 2 \\0 & 1 \end{pmatrix}\begin{pmatrix} Z
\\W\end{pmatrix}
 + \begin{pmatrix} 0 & 0 \\ -1 & 0 \end{pmatrix}\begin{pmatrix} Z'
\\W'\end{pmatrix}
 + \begin{pmatrix} 1 & 1 \\0 & 0 \end{pmatrix}\begin{pmatrix} Z''
\\W''\end{pmatrix} = i\pi \begin{pmatrix} 2 \\ 0 \end{pmatrix} \,.
\notag \ee
Using $Z+Z'+Z''=W+W'+W''=i\pi$ to eliminate $Z'$ and $W'$, we get
\be \begin{pmatrix} 2 & 2 \\1 & 1 \end{pmatrix}\begin{pmatrix} Z
\\W\end{pmatrix}
 + \begin{pmatrix} 1 & 1 \\ 1 & 0 \end{pmatrix}\begin{pmatrix} Z''
\\W''\end{pmatrix} = i\pi \begin{pmatrix} 2 \\ 1 \end{pmatrix}\,.
\notag \ee
From this last expression, we can read off 
\be 
\lbl{NZ41}
\mb A = \begin{pmatrix} 2 & 2 \\ 1 &  1 \end{pmatrix}\,,\qquad 
\mb B = \begin{pmatrix} 1 & 1 \\ 1 & 0 \end{pmatrix}\,,\qquad \eta 
= \begin{pmatrix} 2 \\  1 \end{pmatrix}\,.
\ee
The two gluing equations \eqref{glueeqs} are then
\be \label{41glue}
z^2w^2z''w''=1\,, \qquad zwz'' =-1 \,,
\ee
with $z''=1-z^{-1}$ and $w''=1-w^{-1}$. The solution for the complete hyperbolic 
structure is $z=w=e^{i\pi/3}$.

Finally, a flattening $(f_z,f_z',f_z'';f_w,f_w',f_w'')\in \Z^{6}$, \ie\ an 
integer solution to $\mb A f+\mb B f'' = \eta$ and $f+f'+f''=1$, is given by
\be \lbl{flat41} (f_z,f_z',f_z'';f_w,f_w',f_w'') = (0,1,0;1,0,0) \ee
It is easy to see that every flattening has the form
$(a, b, 1-a-b; b, a, 1 - a - b)$ for integers $a, b$.


\section{Topological invariance of our torsion}
\lbl{sec.torsion}

Given a one-cusped hyperbolic manifold $M$ with regular triangulation $\CT=\{\Delta_i\}_{i=1}^N$ and Neumann-Zagier datum $\widehat\beta_\CT=(z,\mb A,\mb B,f)$, we have proposed that the nonabelian torsion is given by
\be \label{tor}
\qquad \tau_{\CT} := 
\pm \frac12\det\big(\mb A\Delta_{z''}
+\mb B\Delta_z^{-1}\big)z^{f''}z''^{-f} 
\quad\in\,E_M /\{\pm 1\}\,,
\ee
where $\Delta_{z}=\diag(z_1,...,z_N)$, and similarly for $\Delta_{z''}$. Since $(z,z',z'') \in E_M$ we must have $\tau_\CT \in E_M$ as well.

After a brief example of how the formula \eqref{tor} works, we will proceed to prove Theorems \ref{thm.indep} and \ref{thm.1} on the topological invariance of $\tau_\CT$.
We saw in Section \ref{sec.triang} that the Neumann-Zagier datum depends not only on a triangulation $\CT$, but also on a choice of
\begin{enumerate}
\item quad type for $\CT$,
\item one edge of $\CT$ whose gluing equation is redundant,
\item normal meridian path,
\item flattening $f$.
\end{enumerate}
We will begin by showing that $\tau_\CT$ is independent of these four choices, and then show that it is invariant under 2--3 moves, so long as the 2--3 moves connect two regular triangulations.

The four choices here are independent, and can be studied in any order. However, in order to prove independence of flattening, it is convenient to use a quad type for which the matrix $\mb B$ is non-degenerate. Such a quad type can always be found (Lemma \ref{lemma.B}), but is not automatic. Therefore, we will first show invariance under change of quad type, and then proceed to the other choices. It is interesting to note that of all the arguments that follow (including the 2--3 move), independence of flattening is the only one that requires the use of the full gluing equations $z^{\mb A}z''{}^{\mb B}=(-1)^\eta$.

\subsection{Example: $\mb{4_1}$ continued}
\lbl{sub.torexample}

To illustrate the Definition \ref{torintro}, consider the figure-eight knot 
complement again. From Section \ref{sec.41triang}, we already have one 
possible choice for the Neumann-Zagier matrices \eqref{NZ41} and a 
generic flattening \eqref{flat41}. We use them to obtain
\begin{align}
\pm\tau_{\mb{4_1}} &= \frac12 \det\left[\begin{pmatrix} 2 & 2 \\ 1 & 1 
\end{pmatrix}\begin{pmatrix} z'' & 0 \\ 0 & w'' \end{pmatrix} 
+ \begin{pmatrix} 1 & 1 \\ 1 & 0 \end{pmatrix}\begin{pmatrix} z^{-1} & 0 
\\ 0 & w^{-1} \end{pmatrix}\right] w''{}^{-1} \notag \\
&= \frac1{2w''}\det\begin{pmatrix} 2z''+z^{-1} & 2w''+w^{-1}
\\ z''+z^{-1} & w'' \end{pmatrix} \notag \\
&= \frac1{2w''}\det\begin{pmatrix} z''+1 & w''+1\\ 1 & w'' 
\end{pmatrix} \notag\\ 
&=\frac12(z''-w''{}^{-1}) = \frac12{\sqrt{-3}}\,, \label{tor41}
\end{align}
where at intermediate steps we used $z''+z^{-1}-1=w''+w^{-1}-1=0$, and at the 
end we substituted the discrete faithful solution $z=z''=w=w''=e^{i\pi/3}$.

The invariant $\tau_{\mb{4_1}}$ belongs to the invariant trace field 
$E_{\mb{4_1}} = \mathbb{Q}(\sqrt{-3})$, and agrees with the torsion of the 
figure-eight knot complement \cite{DFJ}.

\subsection{Independence of a choice of quad type}
\lbl{sec.invgauge}

Now, let us fix a manifold $M$, a triangulation $\CT$ with $N$ tetrahedra, and an enhanced Neumann-Zagier datum $\widehat\beta_\CT=(z,\mb A,\mb B,f)$.

To prove independence of quad type, it is sufficient to check that $\tau_\CT$ is invariant under a cyclic permutation of the first triple of shape parameters $(z_1,z_1',z_1'')$, while holding fixed the choice of meridian loop and redundant edge.
Let us write $z=(z_1,\dots,z_N)$, $\eta=(\eta_1,\dots,\eta_N)^T$, $f=(f_1,...,f_N)^T$, and 
\be \mb A = \big(a_1,\,a_2,\,\cdots,a_N\big)\,,\quad \mb B 
= \big(b_1,\,b_2,\,\cdots,b_N\big)\,,\ee
in column notation. After the permutation, a new Neumann-Zagier datum is given by
$(\wt z, \wt{\mb A} ,\,\wt {\mb B},\wt f )$ where
\be \wt z = (z_1',z_2,...,z_N)\,,\quad \wt z'=(z_1'',z_2',...,z_N')\,,
\quad \wt z'' = (z_1,z_2'',...,z_N'')\,,\ee
\be \wt{\mb A} = \big(-b_1,\,a_2,\,\cdots,a_N\big)\,,\quad \wt{\mb B} 
= \big(a_1-b_1,\,b_2,\,\cdots,b_N\big)\,,\quad  \wt 
\eta = (n_1-b_1,n_2,...,n_N)^T\,.
\ee
The new shapes satisfy 
$\wt z^{\tilde{\mb A}}\wt z''^{\tilde{\mb B}}=(-1)^{\tilde \eta}$.
We also naturally obtain a new flattening $(\wt f,\wt f',\wt f'')$ by permuting
\be \wt f=(f_1',f_2,...,f_N)^T\,,\quad \wt f'=(f_1'',f_2',...,f_N')^T\,,
\quad \wt f''=(f_1,f_2'',...,f_N'')^T\,; \ee
this is an integer solution to 
$\wt{\mb A}\tilde f+\wt{\mb B}\tilde f''= \wt \eta$ and 
$\tilde f+\tilde f'+\tilde f''=1$.  

The torsion $\tau_{\CT}$ \eqref{torintro} 
consists of two parts, a determinant and a monomial 
correction. By making use of the relations $z_1+z_1'{}^{-1}-1=0$ and 
$z_1z_1'z_1''=-1$, we find the determinant with the permuted Neumann-Zagier datum to be
\begin{align} 
\det\big(\wt {\mb A}\Delta_{\tilde z''}+\wt {\mb B}\Delta_{\tilde z}^{-1}\big) &= 
\det \big( -b_1z_1 +(a_1-b_1)z_1'^{-1},\; a_2 z_2''+b_2 z_2^{-1} ,\;\cdots ,\; 
a_N z_N''+b_N z_N^{-1} \big) \notag\\
 &= \det\big( a_1 z_1'^{-1}-b_1 ,\; a_2 z_2''+b_2 z_2^{-1} ,\;\cdots ,\; 
a_N z_N''+b_N z_N^{-1} \big) \notag\\
 &= -z_1 \det\big( a_1 z_1''+b_1 z_1^{-1} ,\; a_2 z_2''+b_2 z_2^{-1} ,\;
\cdots ,\; a_N z_N''+b_N z_N^{-1} \big) \notag\\
 &= -z_1\det\big(\mb A\Delta_{z''}+\mb B\Delta_{z}^{-1}\big)\,.
\end{align}
By simply using $z_1z_1'z_1''=-1$ and $f_1+f_1'+f_1''=1$, we also see that 
the monomial correction transforms as
\begin{align} \wt z^{\tilde f''}\wt z''{}^{-\tilde f} &=
 z^{f''}z''{}^{-f} \frac{ z_1'{}^{f_1}z_1{}^{-f_1'} }{z_1{}^{f_1''}z_1''{}^{-f_1}} 
\notag\\
 &= z^{f''}z''{}^{-f} 
(-1)^{f_1}\frac{(z_1z_1'')^{-f_1}z_1{}^{f_1+f_1''-1}}{z_1{}^{f_1''}z_1''{}^{-f_1}} 
\notag \\
 &= z^{f''}z''{}^{-f}\,(-1)^{f_1}z_1^{-1}\,.
\end{align}
The extra factors $z_1^{\pm 1}$ in the two parts of the torsion precisely cancel 
each other, leading in the end to
\be \det\big(\wt {\mb A}\Delta_{\tilde z''}
+\wt {\mb B}\Delta_{\tilde z}^{-1}\big)\wt z^{\tilde f''}
\wt z''{}^{-\tilde f} = (-1)^{f_1+1}\det\big(\mb A\Delta_{z''}
+\mb B\Delta_{z}^{-1}\big)z^{f''}z''{}^{-f}\,.\ee
This is just as desired, showing that the torsion is invariant up to a sign.

\subsection{Independence of a choice of edge}
\lbl{sec.invglue}

We fix $M,\,\CT,\,\widehat\beta_\CT=(z,\mb A,\mb B,f)$.
In order to choose matrices $\mb A,\,\mb B$, we must ignore the redundant gluing 
equation corresponding to an edge of $\calT$. This was discussed in Section \ref{sec.NZ}.
Suppose, then, that we choose a different edge to ignore. For 
example, if we choose the $(N-1)^{\text{st}}$ rather than the $N^{\text{th}}$ (and keep the same quad type and meridian path), 
then we obtain new Neumann-Zagier matrices $\wt{\mb A},\,\wt{\mb B}$, 
which are related to the original ones as
\be \wt{\mb A} =  P_{(N-1,N)}\,\,\mb A\,,\qquad \wt{\mb B} 
=  P_{(N-1,N)}\,\,\mb B\,,\ee
where 
\be  P_{(N-1,N)} := \begin{pmatrix} 1 & 0 & \cdots & 0 & 0 & 0 \\
 0 & 1 & \cdots & 0 & 0 & 0 \\
 && \ddots &&&\\
 0 & 0 &\cdots & 1 & 0 & 0 \\
 -1 & -1 & \cdots & -1 & -1 & -1 \\
 0 & 0 & \cdots & 0 & 0 & 1
\end{pmatrix}\,.
\ee
Similarly, eliminating the $I^{\text{th}}$ rather than the $N^{\text{th}}$ 
edge constraint is implemented by multiplying with a matrix 
$P_{(I,N)}$ whose $I^{\rm th}$ row is filled with $-1$'s. Any such matrix satisfies  $\det P_{(I,N)}=- 1$.

In the formula for $\tau_{\CT}$, only the determinant part is affected by a change of edge. Then
\be \det\big(\wt{\mb A}\Delta_{z''}+\wt{\mb B}\Delta_{z}^{-1}\big)
 = \det\big(P_{(I,N)}({\mb A}\Delta_{z''}+{\mb B}\Delta_{z}^{-1})\big)
 = - \det\big({\mb A}\Delta_{z''}+{\mb B}\Delta_{z}^{-1}\big)\,,
\ee
leading to invariance of $\tau_{\CT}$, up to the usual sign.

\subsection{Independence of a choice of meridian path}
\lbl{sec.invmerid}

Recall that an ideal triangulation on $M$ induces a triangulation of its
boundary torus $\pt M$. Consider two simple closed meridian loops in $\pt M$ 
in general (normal) position with respect to the triangulation of $\pt M$.
Recall that these paths are drawn on the triangulated 2-dimensional torus 
$\pt M$ where faces of tetrahedra correspond 
to edges in the 2-dimensional triangulation, and edges of tetrahedra to 
vertices. In particular, for a one-cusped manifold $M$, every 
edge of the triangulation intersects a pair of vertices on the boundary $\pt M$.

\begin{figure}[htb]
\centering
\includegraphics[width=4.5in]{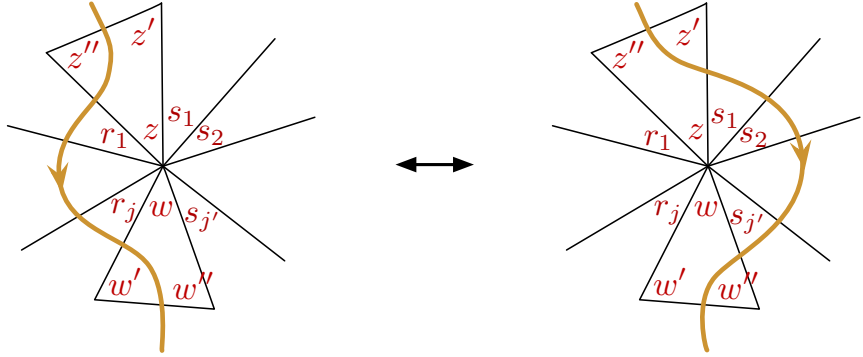}
\caption{The fundamental move for changing a meridian path. Here, we 
deform through an edge $E_I$ with gluing constraint 
$X_I = Z+W+R_1+...+R_j+S_1+...+S_{j'}=2\pi i$.}
\lbl{fig.meridmove}
\end{figure}

We can deform one of our meridian paths into the other by using repeated 
applications of the fundamental move shown in Figure \ref{fig.meridmove} 
--- locally pushing a section of the path across a vertex of $\pt M$. 
Thus, it suffices to assume that the two paths only differ by one such move. 
Suppose that we cross the $I^{\text{th}}$ edge (by Section 
\ref{sec.invglue} we may assume that $I\neq N$), which has a combinatorial 
gluing constraint
\be 
X_I := 
\sum_{i=1}^N \big(\mb{G}_{Ii}Z_i+\mb{G}_{Ii}'Z_i'+\mb{G}_{Ii}''Z_i''\big) 
= 2\pi i\,, 
\ee
and that the two tetrahedra where the paths enter and exit the vicinity of 
the edge have parameters $(Z,Z',Z'')$ and $(W,W',W'')$, as in the figure.
We do not exclude the possibility that $(Z,Z',Z'')$ and 
$(W,W',W'')$ both coincide with the same triple $(Z_i,Z_i',Z_i'')$, in 
some cyclic permutation.
Then the difference in the logarithmic meridian equations 
\eqref{merid} for the two paths will be
\be \delta_\mu = \pm \big(X_I - (Z+Z'+Z'')-(W+W'+W'')\big)\,. 
\lbl{meridchange} 
\ee
Note that two logarithmic meridian constraints that differ by 
\eqref{meridchange} are compatible and equivalent, since upon using the 
additional equations $X_I=2\pi i$ and $Z+Z'+Z''=W+W'+W''=i\pi$, we find 
that $\delta_\mu = 0$. A discretized version of this observation demonstrates 
that that the same flattening satisfies both discretized meridian constraints.%
\footnote{Note that this would not be the case if we allowed self-intersections of the 
meridian loops.}

If we compute matrices $\mb A,\,\mb B$ using one meridian path and 
$\wt{\mb A},\,\wt{\mb B}$ using the other --- keeping quad type, flattening, 
and edge the same --- the change 
\eqref{meridchange} implies that
\be 
\wt{\mb A} = P^{(\mu)}_I{}^{\pm 1}\mb A\,,
\qquad \wt{\mb B} = P^{(\mu)}_I{}^{\pm 1}\mb B\,,
\ee
where $P^{(\mu)}_I$ is the $\SL(N,\Z)$ matrix
\be P^{(\mu)}_I = I + E_{NI}\,, \ee
\ie\ the identity plus an extra entry `1' in the $N^{\text{th}}$ (meridian) 
row and $I^{\text{th}}$ column. Since $\det P^{(\mu)}_I=1$, this immediately shows 
that $\det\big(\wt{\mb A}\Delta_{z''}+\wt{\mb B}\Delta_{z}^{-1}\big)
= \det\big({\mb A}\Delta_{z''}+{\mb B}\Delta_{z}^{-1}\big)$, and so the a change 
in the meridian path cannot affect $\tau_{\CT}$.

\subsection{Independence of a choice of flattening}
\lbl{sec.invflat}

Now suppose that we choose two flattenings $(f,f',f'')$ 
and $(\wt f,\wt f',\wt f'')$, both satisfying
\be 
\mb A  f  + \mb B f'' = \eta\,,\qquad 
f+ f'+ f''=1\,. \ee
\be 
\mb A \wt f  + \mb B\wt f'' = \eta\,,\qquad 
\wt f+\wt f'+\wt f''=1\,. \ee
We may assume that we have a quad type with $\mb B$ non-degenerate. Indeed, by the result of Section \ref{sec.invgauge}, flattening invariance in one quad type implies flattening invariance in any quad type. Moreover, by Lemma \ref{lemma.B} of Appendix \ref{app.symp}, a quad type with non-degenerate $\mb B$ always exists. We also note that when $\mb B$ is invertible the matrix $\mb B^{-1}\mb A$ is symmetric (Lemma \ref{lemma.BA}).

The determinant in $\tau_{\CT}$ is insensitive to the change of 
flattening. The monomial, on the other hand, can be manipulated as follows. 
Let us choose logarithms $(Z,Z',Z'')$ of the shape parameters such that 
$\mb A Z+\mb B Z''=i\pi \eta$. 
Then, assuming that $\mb B$ is non-degenerate, we compute:
\begin{align} \frac{z^{\tilde f''}z''^{-\tilde f}}{z^{f''}z''{}^{-f}}
 &= \exp\big[ Z\cdot (f''-\wt f'')-Z''\cdot (f-\wt f) \big] \notag\\
 &= \exp\big[ -Z\cdot \mb B^{-1}\mb A(f-\wt f)
-(i\pi \mb B^{-1}\eta-\mb B^{-1}\mb A Z)\cdot (f-\wt f)\big] \notag\\
 &= \exp\big[-i\pi \mb B^{-1}\eta\cdot (f-\wt f)\big] \notag \\
 &= \exp\big[-i\pi f''\cdot(f-\wt f)
-i\pi \mb B^{-1}\mb A f\cdot(f-\wt f)\big] \notag\\
 &= \exp\big[-i\pi f''\cdot(f-\wt f)+i\pi  f\cdot(f''-\wt f'')\big] \notag \\
 &= \exp\big[i\pi (f''\cdot \wt f- f\cdot\wt f'')\big] = \pm 1\,. \notag
\end{align}
Therefore, the monomial can change at most by a sign, and $\tau_{\CT}$ is 
invariant as desired.

This completes the proof of Theorem \ref{thm.indep}.
\qed

\subsection{Invariance under 2--3 moves}
\lbl{sec.inv23}

We finally come to the proof of Theorem \ref{thm.1}, \ie the invariance of 
$\tau_{\CT}$ under 2--3 moves. We set up the problem as in Figure 
\ref{fig.23}. Namely, we suppose that 
$M$  
has two different regular triangulations $\CT$ and $\wt \CT$, with $N$ and $N+1$ tetrahedra, respectively, which are related by a local 2--3 move. Let us denote the 
respective (triples of) shape parameters as
\be 
Z := (X_1,X_2,Z_3,...,Z_N)\,,\qquad 
\tilde Z := (W_1,W_2,W_3,Z_3,...,Z_N)\,.
\ee
We fix a quad type, labeling the five tetrahedra involved in the 2--3 move 
as in the figure. We will also assume that when calculating Neumann-Zagier 
matrices $\mb A$ and $\mb B$, we choose to ignore an edge that is 
\emph{not} the central one of the 2--3 {\em bipyramid}.

\begin{figure}[htb]
\centering
\includegraphics[width=5.5in]{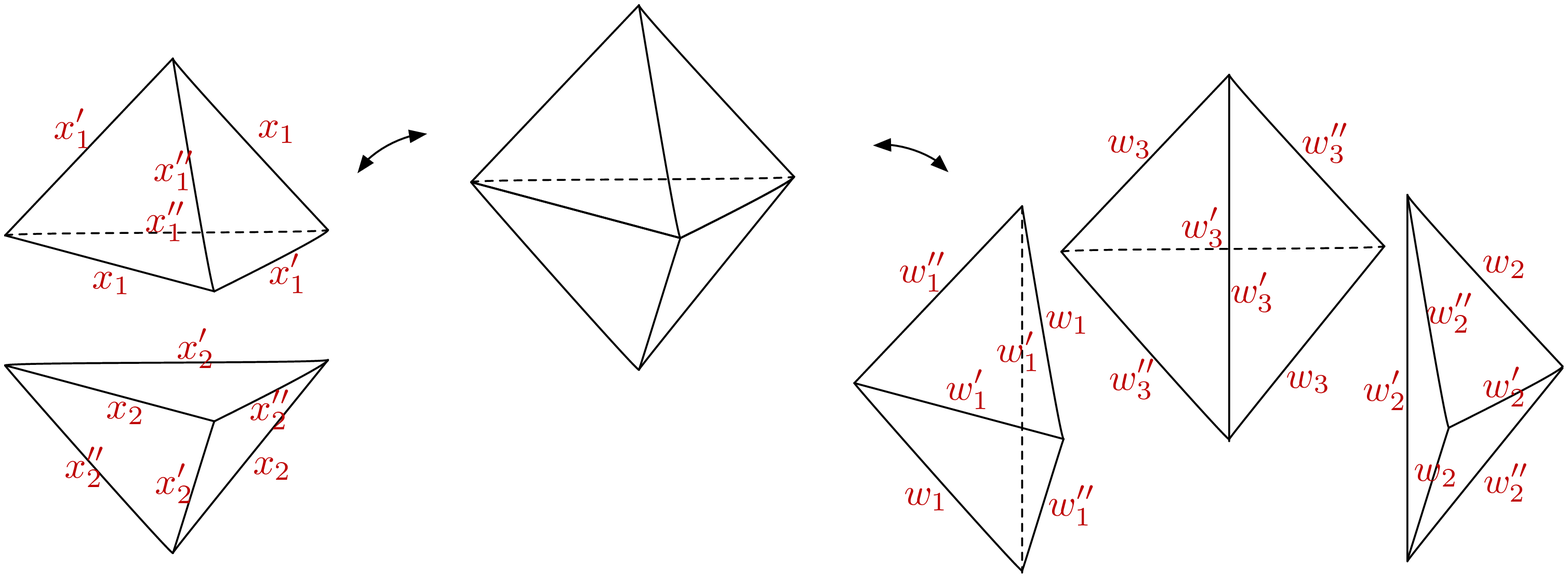}
\caption{The geometry of the 2--3 move: a bipyramid split into two 
tetrahedra for triangulation $\bigcup_{i=1}^N\Delta_i$, and three for 
triangulation $\bigcup_{i=1}^{N+1} \wt\Delta_i$.}
\lbl{fig.23}
\end{figure}

There are nine linear relations among the shapes of the tetrahedra involved 
in the move; three come from adding dihedral angles on the equatorial edges 
of the bipyramid:
\be 
W_1'=X_1+X_2\,,\qquad W_2'=X_1'+X_2''\,,\qquad 
W_3'= X_1''+X_2'\,, \lbl{eqs3}
\ee
and six from the longitudinal edges:
\be 
\lbl{eqs6}
\begin{array}{c@{\qquad}c@{\qquad}c}
X_1=W_2+W_3''\,,& X_1'=W_3+W_1''\,,&  X_1''=W_1+W_2''\,, \\[.2cm]
X_2 = W_2''+W_3\,,& X_2'=W_1''+W_2\,,& X_2''=W_3''+W_1\,.
\end{array}
\ee
Moreover, due to the central edge of the bipyramid, there is an extra 
gluing constraint in~$\wt \CT$:
\be 
W_1'+W_2'+W_3' = 2\pi i\,. 
\lbl{glue3} 
\ee

After exponentiating the relations (\ref{eqs3}--\ref{glue3}), and also 
using $z_iz_i'z_i''=-1$ and $z_i''+z_i^{-1}-1=0$ for every tetrahedron 
$\Delta_i$ and $\wt\Delta_i$, we find a birational map between the shape 
parameters in the two triangulations. Explicitly,
\begin{align} 
\lbl{map23}
\Big\{w_1'=x_1x_2,\,
w_2'=\frac{1-x_2^{-1}}{1-x_1},\,
w_3'=\frac{1-x_1^{-1}}{1-x_2}\Big\}\quad\text{or}\quad 
\Big\{ x_1=\frac{1-w_2'{}^{-1}}{1-w_3'},\,x_2=\frac{1-w_3'{}^{-1}}{1-w_2'}\Big\}\,.
\end{align}
Note that the birational map is well defined and one-to-one 
as long as no shape parameters $(x_1,x_2,w_1,w_2,w_3)$ equal 0, 1, or 
$\infty$. This condition is satisfied \emph{so long as} triangulations $\CT$ and $\wt \CT$ are both regular. (A necessary condition is that no univalent edges are created on one side or the other of the 2--3 move; this is also sufficient when considering the discrete faithful representation of $M$.)

We must also choose a flattening in the two triangulations. Let us suppose 
that for $\bigcup_{i=1}^{N+1}\wt \Delta_i$ we have a flattening with 
(triples of) integer parameters $\wt f=(d_1,d_2,d_3,f_3,...,f_N)$. This 
automatically determines a flattening $ f= (e_1,e_2,f_3,...,f_N)$ for the 
$\bigcup_{i=1}^{N} \Delta_i$ triangulation, by simply setting
\be 
\lbl{flat6}
\begin{array}{c@{\qquad}c@{\qquad}c}
e_1=d_2+d_3''\,,& e_1'=d_3+d_1''\,,&  e_1''=d_1+d_2''\,, \\[.2cm]
e_2 = d_2''+d_3\,,& e_2'=d_1''+d_2\,,& e_2''=d_3''+d_1\,.
\end{array}
\ee
This is a discretized version of the six longitudinal relations 
\eqref{eqs6}. One can check that expected relations such as 
$e_1+e_1'+e_1''=1$ are satisfied by virtue of the discretized 
edge constraint $d_1'+d_2'+d_3'=2$ (\cf\ \eqref{glue3}).

We have all the data needed to calculate $\tau_{\CT}$. Let us start with 
determinants. In the triangulation $\bigcup_{i=1}^{N} \Delta_i$, we write 
the matrices $\mb A$ and $\mb B$ schematically in columns as
\be 
\mb A = (a_1,\;a_2,\;a_i)\,,\quad \mb B = (b_1,\;b_2,\;b_i)\,, 
\ee
with $a_i$ meaning $(a_3,a_4,...,a_N)$ and similarly for $b_i$. This leads 
to a determinant
\be \det\big({\mb A}\Delta_{z''}+{\mb B}\Delta_{z}^{-1}\big) 
= \det\Big(a_1x_1''+\tfrac{b_1}{x_1},\; a_2x_2''+\tfrac{b_2}{x_2},\; 
a_iz_i''+\tfrac{b_i}{z_i} \big)\,.
\ee
Alternatively, in the triangulation $\bigcup_{i=1}^{N+1}\wt\Delta_i$, the 
matrices $\wt{\mb A}$ and $\wt{\mb B}$ have one extra row and one extra 
column. The extra gluing condition \eqref{glue3} causes the extra row in 
both $\wt{\mb A}$ and $\wt{\mb B}$ to contain three $-1$'s. Altogether, 
the matrices take the form
\be 
\wt {\mb A} = \begin{pmatrix} -1 & -1 & -1  & 0 \\ 
b_1+b_2 & a_1 & a_2 & a_i \end{pmatrix}\,,\qquad \wt{\mb B} 
=\begin{pmatrix} -1 & -1 & -1 & 0 \\ 0 & a_2+b_1 & a_1+b_2 & b_i 
\end{pmatrix}\,,
\ee
so that
\begin{align} 
\wt{\mb A}\Delta_{\tilde z''}+\wt{\mb B}\Delta_{\tilde z}^{-1} 
&= \begin{pmatrix} -w_1''-\frac{1}{w_1} & -w_2''-\frac{1}{w_2  } 
& -w_3''-\frac{1}{w_3} & 0 \\ (b_1+b_2)w_1'' & a_1w_2''+\frac{a_2+b_1}{w_2} 
& a_2 w_3''+\frac{a_1+b_2}{w_3} & a_i z_i''+\frac{b_i}{z_i} \end{pmatrix} 
\notag\\
&=  \begin{pmatrix} -1 & -1 & -1 & 0 \\ (b_1+b_2)w_1'' 
& a_1w_2''+\frac{a_2+b_1}{w_2} & a_2 w_3''+\frac{a_1+b_2}{w_3} 
& a_i z_i''+\frac{b_i}{z_i} \end{pmatrix}\,. \notag
\end{align}
It is then straightforward to check, using the map \eqref{map23}, that
\be
\big(\wt{\mb A}\Delta_{\tilde z''}+\wt{\mb B}\Delta_{\tilde z}^{-1}\big) 
\begin{pmatrix} 1 & -1 & -1 & 0 \\
   0 & 1 & 0 & 0 \\
   0 & 0 & 1 & 0 \\
   0 & 0 & 0 & \mb{1} \end{pmatrix}
 \begin{pmatrix} 1 & 0 & 0 & 0 \\
   0 & 1 & \frac{w_3' x_2''}{x_1''} & 0 \\
   0 & \frac{w_2'x_1''}{x_2''} & 1 & 0  \\
   0 & 0 & 0 & \mb{1} \end{pmatrix}=
 \begin{pmatrix} -1 & 0 \\
   * & {\mb A}\Delta_{z''}+{\mb B}\Delta_{z}^{-1} \end{pmatrix}
 \,.
\ee
The determinant of the last matrix on the left hand side is 
$1-w_2'w_3'=1-w_1'{}^{-1}=w_1$. Therefore,
\be \lbl{det23} \det\big(\wt{\mb A}\Delta_{\tilde z''}
+\wt{\mb B}\Delta_{\tilde z}^{-1}\big) 
= -w_1^{-1}\det\big({\mb A}\Delta_{z''}+{\mb B}\Delta_{z}^{-1}\big)\,.
\ee

We should also consider the monomial correction. However, with flattenings 
related as in \eqref{flat6}, and with shapes related by the exponentiated 
version of \eqref{eqs6}, it is easy to check that
\be 
\wt z^{\tilde f''}\wt z''{}^{-\tilde f} 
= w_1 (-1)^{d_2''-e_1''}z^{f''}z''{}^{-f}\,. 
\lbl{mon23} 
\ee 
We have thus arrived at the desired result; by combining \eqref{det23} 
and \eqref{mon23}, we find
\be 
\det\big(\wt{\mb A}\Delta_{\tilde z''}
+\wt{\mb B}\Delta_{\tilde z}^{-1}\big)\,
\wt z^{\tilde f''}\wt z''{}^{-\tilde f} = \pm \det \big({\mb A}\Delta_{z''}
+{\mb B}\Delta_{z}^{-1}\big)z^{f''}z''{}^{-f}\,,
\ee
so that $\tau_{\CT}$ is invariant under the 2--3 move. This completes the
proof of Theorem \ref{thm.1}.
\qed


\section{Torsion on the character variety}
\lbl{sec.extensions}

Having given a putative formula for the non-abelian torsion of a cusped 
hyperbolic manifold $M$ at the discrete faithful representation $\rho_0$, 
it is natural to ask whether the formula generalizes to other settings. In 
this section, we extend the torsion formula to general representations 
$\rho:\pi_1(M)\longto \text{(P)SL}(2,\BC)$ for manifolds $M$ with torus 
boundary, essentially by letting the shapes 
$z$ be functions of $\rho$. We also find that some special results hold 
when $M$ is hyperbolic and the representations lie on the geometric 
component $X_M^{\rm geom}$ of the $SL(2,\C)$ character variety.

We will begin with a short review of what it means for a combinatorial 
ideal triangulation to be \emph{regular} with respect to a general 
representation $\rho$. We will also finally prove Proposition \ref{prop.EP} 
from the Introduction. Recall that Proposition \ref{prop.EP} identified a 
canonical connected subset $\calX^{\EP}_M$ of the set of regular triangulations 
$\calX_{\rho_0}$ of a hyperbolic 3-manifold $M$. This result allowed us to 
construct the topological invariant $\tau_M$.

We then proceed to define an enhanced Neumann-Zagier datum 
$\widehat\beta_\CT=(z,\mb A,\mb B,f)$ suitable for a general representation 
$\rho$, and propose a generalization of the torsion formula:
\be 
\lbl{utor} 
\tau_\CT(\rho) := \pm\frac12\det\big(\mb A\Delta_{z''}
+\mb B\Delta_z^{-1}\big)z^{f''}z''{}^{-f}  \,.
\ee
This formula looks identical to \eqref{torintro}. However, the shape parameters here are promoted to functions $z\to z(\rho)$ of the representation $\rho$, which satisfy a well known deformed version of the gluing equations. Moreover, the flattening $f$ is slightly more restricted than it was previously. 
We will prove in Section \ref{sec.utorsion} that

\begin{theorem}\lbl{thm.utor}
The formula for $\tau_\CT(\rho)$ is independent of the choice of enhanced Neumann-Zagier datum, and is invariant under 2--3 moves connecting $\rho$-regular triangulations.
\end{theorem}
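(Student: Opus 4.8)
The plan is to mirror the structure of the proofs of Theorems \ref{thm.indep} and \ref{thm.1}, since the formula \eqref{utor} for $\tau_\CT(\rho)$ is formally identical to \eqref{torintro}; the only new feature is that the shapes $z=z(\rho)$ now satisfy the \emph{deformed} gluing equations $z^{\mb A}z''{}^{\mb B}=(-1)^{\eta}\exp(\text{holonomy data})$ rather than the parabolic ones, and that the flattening $f$ is constrained by a (slightly modified) compatibility condition with the chosen peripheral curves. First I would isolate exactly which algebraic identities among the shapes were used in each step of Section \ref{sec.torsion}: the per-tetrahedron relations $z_iz_i'z_i''=-1$ and $z_i''+z_i^{-1}-1=0$ (used in the quad-type and 2--3 move arguments), the linear relations among logarithmic shapes $\mb A Z + \mb B Z'' = i\pi\eta$ (used only in the flattening argument, where the full gluing equations entered), and the purely combinatorial facts about the matrices $P_{(I,N)}$ and $P_I^{(\mu)}$ (used for the edge and meridian independence). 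The first two families of relations hold verbatim for any $\rho$-regular triangulation, so the quad-type and 2--3 move computations go through unchanged, as does the combinatorial edge-independence argument.

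The steps, in order, would be: (1) recall the definition of $\rho$-regularity and of the enhanced Neumann-Zagier datum for a general $\rho$, noting precisely the deformed gluing equations and the restricted notion of flattening; (2) independence of quad type --- copy the column-operation computation of Section \ref{sec.invgauge} verbatim, since it uses only $z_1z_1'z_1''=-1$, $z_1+z_1'^{-1}-1=0$, and $f_1+f_1'+f_1''=1$, none of which reference the representation; (3) independence of the ignored edge --- identical to Section \ref{sec.invglue}, as $\det P_{(I,N)}=-1$ regardless; (4) independence of the peripheral curve --- here I expect a genuinely new point: the meridian (or general peripheral) gluing equation now has a nonzero right-hand side depending on $\rho$, so the matrix $P_I^{(\mu)}$ implementing a curve-deformation move may act differently, and one must re-examine whether the discretized flattening still satisfies both peripheral constraints; but since $\det P_I^{(\mu)}=1$ still holds and only the determinant part of $\tau_\CT$ is affected, this should again give invariance once the bookkeeping on the right-hand side is checked; (5) independence of flattening --- this is the step that used the \emph{full} gluing equations, so I would redo the telescoping computation of Section \ref{sec.invflat} using the deformed relations $\mb A Z + \mb B Z'' = i\pi\eta + (\text{$\rho$-dependent term})$, and check that the extra $\rho$-dependent term pairs against $f-\wt f$ to give something in $2\pi i\BZ$ (hence trivial after exponentiation), which is presumably exactly the content of the ``slightly more restricted'' flattening condition; (6) invariance under 2--3 moves connecting $\rho$-regular triangulations --- copy Section \ref{sec.inv23}, observing that the birational shape map \eqref{map23} and the flattening relations \eqref{flat6} are purely local and make no reference to $\rho$, and that the key determinant identity \eqref{det23} and monomial identity \eqref{mon23} used only the local per-tetrahedron and longitudinal-edge relations.

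The main obstacle I anticipate is step (5), the flattening independence, together with pinning down the correct restricted notion of flattening in step (1). In the parabolic case the argument reduced $z^{\wt f''}z''^{-\wt f}/z^{f''}z''^{-f}$ to $\exp[i\pi(f''\cdot\wt f - f\cdot\wt f'')]=\pm1$ by substituting the parabolic gluing equations; for general $\rho$ the substitution $\mb A Z + \mb B Z'' = i\pi\eta$ acquires a correction encoding the (logarithmic) eigenvalues of $\rho$ on the peripheral curves, and the telescoping will terminate in $\exp[i\pi(\dots) + (\text{correction})\cdot(f-\wt f)]$. One must then argue that $(f-\wt f)$ lies in the appropriate sublattice making the correction term a multiple of $2\pi i$, which is presumably built into the definition of an admissible flattening for a general representation (the extra restriction alluded to just after \eqref{utor}). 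Once that lattice condition is correctly stated, the computation is a routine variant of the one already carried out, and all the remaining steps are essentially transcriptions of Section \ref{sec.torsion} with $\rho_0$ replaced by $\rho$.
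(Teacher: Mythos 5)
Your overall outline is correct and matches the paper's structure exactly: steps (2), (3), (4), and (6) are indeed verbatim transcriptions of Section \ref{sec.torsion} (your caution in step (4) is unnecessary --- as the paper observes, the meridian-loop argument only ever invokes the \emph{edge} equation $X_I=2\pi i$ and the per-tetrahedron relation $Z+Z'+Z''=i\pi$, never the meridian equation, so deforming the latter is harmless). You also correctly identify step (5), flattening independence, as the only place where genuinely new work is required, because it is the only argument that uses the gluing equations.

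However, your sketch of how step (5) should close has a real gap. You predict that the new $\rho$-dependent term pairs with $f-\wt f$ to give ``something in $2\pi i\BZ$,'' i.e.\ a divisibility/lattice condition. This cannot be the mechanism: the deformed gluing equation is $\mb AZ+\mb BZ''=2\bm u+i\pi\eta$ with $\bm u=(0,\dots,0,u)^T$ and $u$ a \emph{continuous} parameter, so the extra term $2\mb B^{-1}\bm u\cdot(f-\wt f)$ can only be innocuous for all $u$ if it vanishes \emph{identically}, not merely modulo $2\pi i$. The paper's mechanism for this is qualitatively different from a lattice condition and requires two ingredients you have not located. First, the ``slightly more restricted'' flattening is Definition \ref{def.flattening.long}: compatibility with a longitude, $C\cdot f+D\cdot f''=\eta_\lambda$, an \emph{extra linear equation} on $(f,f'')$. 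Second, one must complete $(\mb A\;\mb B)$ to a full symplectic matrix $\left(\begin{smallmatrix}\mb A&\mb B\\\mb C&\mb D\end{smallmatrix}\right)\in\Sp(2N,\BQ)$ whose bottom row is $(C,D)$, and use the symplectic identity $\mb A\mb D^T-\mb B\mb C^T=I$ (equivalently $\mb B^{-1}=\mb B^{-1}\mb A\mb D^T-\mb C^T$) to rewrite $\mb B^{-1}\bm u\cdot(f-\wt f) = -u\big(C\cdot(f-\wt f)+D\cdot(f''-\wt f'')\big)$; the contraction with $\bm u$ picks out exactly the bottom row of $\mb C,\mb D$, and since both flattenings satisfy the longitude equation this quantity is zero on the nose. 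Without the symplectic completion and the longitude-compatibility constraint, the telescoping computation does not close.
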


\noindent When $M$ is hyperbolic, it turns out that $\rho_0$-regular triangulations are $\rho$-regular for all but finitely many representations $\rho\in X_M^{\rm geom}$. Then we can create a topological invariant $\tau_M$ that is a function on $X_M^{\rm geom}$ just as in Proposition \ref{prop.EP}, by evaluating $\tau_\CT(\rho)$ on any triangulation in the canonical subset $\calX^{\EP}_M\subseteq \calX_{\rho_0}$. 

In general, there is a rational map from the character variety $X_M$ to the zero-locus $Y_M$ of the A-polynomial $A_M(\ell,m)=0$ \cite{CCGLS}, for any $M$ with torus boundary. Therefore, the shapes $z$ and the torsion $\tau_\CT$ are algebraic functions on components of the A-polynomial curve $Y_M$. When $M$ is hyperbolic and $\rho\in X_M^{\rm geom}$, somewhat more is true: the shapes are \emph{rational} function on the geometric component $Y_M^{\rm geom}$ (Proposition \ref{prop.shapes}). Then
\be \qquad  \tau_M \;\in\; C(Y_M^{\rm geom}) = \BQ(m)[\ell]/\big(A_M^{\rm geom}(\ell,m)\big)\,,
\ee
where $A^{\rm geom}_M(\ell,m)$ is the geometric factor of the A-polynomial. We will give a simple example of the function $\tau_M$ for the figure-eight knot in Section \ref{sec.utor41}.

\subsection{A review of $\rho$-regular ideal triangulations}
\lbl{sub.revrho}

In this section we discuss the $\rho$-regular ideal triangulations
that are needed to generalize our torsion invariant. 
Let $M$ denote a 3-manifold with nonempty boundary and let
$\rho: \pi_1(M)\longto\PSL(2,\BC)$ be a $\PSL(2,\BC)$ representation of its
fundamental group. Let $\calX$ denote the set of combinatorial ideal
triangulations $\calT$ of $M$. 
Matveev and Piergallini independently
showed that every two elements of $\calX$ with at least two ideal
tetrahedra are connected by a sequence of 2-3 moves (and their inverses)
\cite{Ma1,Pi}. For a detailed exposition, see \cite{Ma2,BP}.

Given an ideal triangulation $\calT$, let $V_{\calT}$ denote the affine variety
of non-degenerate solutions (i.e., solutions in $\BC\setminus\{0,1\}$)
of the gluing equations of $\calT$ corresponding
to its edges. There is a developing map
\be
\lbl{eq.dev}
V_{\calT} \longto X_M\,,
\ee
where $X_M:=\Hom(\pi_1(M),\PSL(2,\BC))/\PSL(2,\BC)$ denotes the affine variety
of all $\PSL(2,\BC)$ representations of $\pi_1(M)$.

\begin{definition}
\lbl{def.rhoregular}
Fix a $\PSL(2,\BC)$-representation of $M$. We say that $\calT \in \calX$
is $\rho$-{\em regular} if $\rho$ is in the image of the developing map
\eqref{eq.dev}. 
\end{definition}
Let $\calX_\rho \subset \calX$ denote the set of all $\rho$-regular
ideal triangulations of $M$. When $M$ is hyperbolic, let
$\rho_0$ denote its discrete faithful 
representation $\rho_0$ and let $X^{\geom}_M \subset X_M$ denote the geometric 
component of its character variety \cite{Th,NZ}. We then have the following result.

\begin{lemma}
\lbl{lem.wellknown}
\rm{(a)} $\calT \in \calX_{\rho_0}$ if and only if 
$\calT$ has no homotopically peripheral (\ie, univalent) edges.
\newline
\rm{(b)} If $\calT \in \calX_{\rho_0}$, then 
$\calT \in \calX_{\rho}$ for all but finitely many $\rho \in X^{\geom}_M$.
\end{lemma}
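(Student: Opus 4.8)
The plan is to prove the two parts of Lemma \ref{lem.wellknown} separately, treating (a) as essentially a known topological fact and (b) as an algebraic-geometry argument about the developing map.

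For part (a), I would recall the standard dictionary between ideal triangulations and the discrete faithful representation $\rho_0$. An ideal triangulation $\calT$ carries the complete hyperbolic structure exactly when the (edge) gluing equations admit a positively-oriented solution in $\mathbb{H}^3$, i.e.\ a genuine geometric solution; Thurston's theory and the work of Neumann--Zagier \cite{Th,NZ} guarantee such a solution exists precisely when $\calT$ has no univalent (homotopically peripheral) edges. More precisely, if an edge $e$ of $\calT$ is univalent, then after one 0--2 type consideration one sees the holonomy around $e$ cannot be trivial while simultaneously the shapes satisfy the edge equation $\sum(\text{angles}) = 2\pi$; conversely, a theorem of e.g.\ \cite{Champanerkar, BDR-V, DnG, Till} (cited in the introduction) identifies the $\rho_0$-regular triangulations topologically as exactly those with no univalent edges. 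I would cite these sources and give the short argument in the univalent-edge direction, leaving the converse to the literature.

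For part (b), the idea is to use the developing map $V_{\calT} \longto X_M$ of \eqref{eq.dev} together with the hypothesis that $\calT \in \calX_{\rho_0}$, i.e.\ that $\rho_0$ lies in the image. First I would note that $V_{\calT}$ is an affine variety, the developing map $\mathrm{dev}_{\calT}: V_{\calT} \to X_M$ is a morphism of varieties, and its image is a constructible set containing $\rho_0$; since $\rho_0$ is a smooth point of $X_M^{\geom}$ of complex dimension $1$ (the number of cusps), the component of $V_{\calT}$ through the geometric solution maps to a Zariski-dense subset of $X_M^{\geom}$ — here one uses that the geometric solution is itself a smooth point of $V_{\calT}$ and that the derivative of $\mathrm{dev}_{\calT}$ there is an isomorphism onto the tangent space of $X_M^{\geom}$, which is Neumann--Zagier's rigidity/symplectic computation. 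A dominant morphism between irreducible curves has image omitting only finitely many points (the image is constructible and dense, hence cofinite), so $\calX_{\rho}$ contains $\calT$ for all but finitely many $\rho \in X_M^{\geom}$. Equivalently, the exceptional set is the complement of the image of $\mathrm{dev}_{\calT}$ restricted to the relevant component, a proper closed subset of the curve $X_M^{\geom}$, hence finite.

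The main obstacle, I expect, is being careful about which component of $V_{\calT}$ one tracks and why the developing map is dominant onto $X_M^{\geom}$ rather than, say, collapsing to a point or mapping into a different component; this requires invoking that the geometric shape solution is a smooth (even isolated-up-to-deformation) point and that the induced map on tangent spaces is an isomorphism, which is exactly the content of the Neumann--Zagier symplectic formalism reviewed in Section \ref{sec.NZ} and Appendix \ref{app.symp}. A secondary subtlety is that $\mathrm{dev}_{\calT}$ need not be a finite morphism, so one should phrase the conclusion using constructibility of the image (Chevalley) rather than properness: the image contains a nonempty Zariski-open subset of $X_M^{\geom}$, and its complement in that curve is therefore a finite set of points. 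I would also remark that degenerate shapes ($z \in \{0,1,\infty\}$) are automatically excluded since $V_{\calT}$ is defined with $z \in \mathbb{C}\setminus\{0,1\}$, so the only failures of $\rho$-regularity come from $\rho$ lying outside the (cofinite) image.
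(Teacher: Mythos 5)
Your treatment of part (a) matches the paper: both you and the authors delegate to the literature (\cite{Champanerkar,BDR-V,DnG,Till}), and your sketch of the ``univalent edge $\Rightarrow$ no geometric solution'' direction is consistent with what those references say.

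For part (b) you take a genuinely different route, and it is a correct one. The paper's proof is a pointwise algebraic criterion: it observes that $\calT$ is $\rho$-regular whenever the $\rho$-holonomy of each edge (closed up along the boundary torus) fails to commute with the peripheral image, notes that this non-commutation condition is Zariski-open, verifies it holds at $\rho_0$ by an analytic continuity argument, and concludes that the exceptional set is a proper closed subset of the affine curve $X^{\geom}_M$, hence finite. Your argument instead works at the level of the developing map as a morphism of varieties: you invoke the Neumann--Zagier rigidity computation to show that the differential of $\mathrm{dev}_\CT$ at the geometric shape solution is an isomorphism onto $T_{\rho_0}X^{\geom}_M$, deduce that the image is dense in $X^{\geom}_M$, and then apply Chevalley's theorem to get constructibility and hence cofiniteness. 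What your approach buys is that it avoids having to justify the sufficiency of the paper's commutation criterion (which is stated as an ``observation'' but is really the $\rho$-version of the part~(a) equivalence, and requires a little unwinding); what it costs is the appeal to the NZ rigidity/symplectic machinery, whereas the paper's argument only needs the qualitative statement that non-commutation is an open condition satisfied analytically near $\rho_0$. One small imprecision on your side: the phrase ``dominant morphism between irreducible curves'' presupposes that the relevant component of $V_\CT$ is a curve; what you actually need (and do use) is weaker, namely that the image is constructible and Zariski-dense in the one-dimensional $X^{\geom}_M$, which follows from Chevalley plus the open-mapping consequence of the rank computation regardless of $\dim V_\CT$.
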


\begin{proof}
Part (a) has been observed several times; see \cite{Champanerkar},
\cite[Sec.10.3]{BDR-V}, \cite[Thm.2.3]{Till} and also \cite[Rem.3.4]{DnG}.
For part (b), fix $\calT \in \calX_{\rho_0}$. Observe that $\calT$ is
$\rho$-regular if the image of 
every edge%
\footnote{Note that every edge can be completed to a closed loop by adding a 
path on the boundary $T^2$. The choice of completion does not matter for 
studying commutation with the peripheral subgroup.} %
of $\calT$ under $\rho$ does not commute with the image under
$\rho$ of the peripheral subgroup of $M$. This is an algebraic 
condition on $\rho$, and moreover, when $\rho \in X^{\geom}_M$ is analytically
nearby $\rho_0$, the condition is satisfied. It follows that the set
of points of $ X^{\geom}_M$ that satisfy the above condition is Zariski
open. On the other hand, $X^{\geom}_M$ is an affine curve \cite{Th,NZ}.
It follows that $\calT$ is $\rho$-regular for all but finitely many
$\rho \in  X^{\geom}_M$. 
\end{proof}

\subsection{The Epstein-Penner cell decomposition and its triangulations}
\lbl{sub.EP}

Now we consider the canonical ideal cell decomposition of
a hyperbolic manifold $M$ with cusps \cite{EP}, and finally prove Proposition \ref{prop.EP}. It is easy to
see that every convex ideal polyhedron can be triangulated into ideal
tetrahedra with non-degenerate shapes, see for instance \cite{HRS}.
One wishes to know that every two such triangulations are related by
a sequence of 2--3 moves. This is a combinatorial problem of convex geometry
which we summarize below. For a detailed discussion,  
the reader may consult the book \cite{LRS} and references therein.

Fix a convex polytope $P$ in $\BR^d$. One can consider the set of
triangulations of $P$. When $d=2$, $P$ is a polygon and it is known that 
every two triangulations are related by a sequence of flips. For general
$d$, flips are replaced by {\em geometric bistellar moves}. When $d \geq 5$,
it is known that the graph of triangulations (with edges given
by geometric bistellar flips) is not connected, and has isolated vertices.
For $d=3$, it is not known whether the graph is connected.

The situation is much better when one considers {\em regular triangulations}
of $P$. In that case, the corresponding graph of regular triangulations
is connected, an in fact it is the edge set of the 
{\em secondary polytope} of $P$. When $d=3$ and $P$ is convex and in general 
position, then the only geometric bistellar move is the 2--3 move where
the added edge that appears in the move is an edge that connects two vertices
of $P$. When $d=3$ and $P$ is not in general position, the same conclusion
holds as long as one allows for tetrahedra that are flat, \ie, lie
on a 2-dimensional plane.

Returning to the Epstein-Penner ideal cell decomposition, let
$\calX^{\EP}_M$ denote the set of regular (in the sense of polytopes and
in the sense of $\rho_0$) ideal triangulations of the ideal cell decomposition.
The above discussion together with the fact that no edge of the ideal cell
decomposition is univalent, implies that $\calX^{\EP}$ is a connected
subset of $\calX_{\rho_0}$. This concludes the proof of Proposition
\ref{prop.EP}.

A detailed discussion on the canonical set $\calX^{\EP}_M$ of ideal 
triangulations of a cusped hyperbolic 3-manifold $M$ is given in 
\cite[Sec.6]{GHRS}.


\subsection{Neumann-Zagier datum and the geometric component}
\lbl{sub.GE}

Let $M$ be a manifold with torus boundary and $\CT$ a (combinatorial) 
ideal triangulation. The Neumann-Zagier datum 
$\beta_\CT=(z,\mb A,\mb B)$ may be generalized for representations $\rho\in X_M$ 
besides the discrete faithful.

To begin, choose a representation $\rho:\pi_1(M)\longto \PSL(2,\BC)$, and, if desired, a lift to ${\rm SL}(2,\BC)$. Let $(\mu,\lambda)$ be meridian and longitude cycles%
\footnote{Recall again that these cycles are only canonically defined for knot complements. In general there is some freedom in choosing them, but the torsion depends in a predictable way on the choice, \cf\ \cite{Ya}.} %
on $\pt M$, and let $(m^{\pm 1},\ell^{\pm 1})$ be the eigenvalues of $\rho(\mu)$ and $\rho(\lambda)$, respectively. For example, for the lift of the discrete faithful representation to ${\rm SL}(2,\C)$, we have $(m,\ell)=(1,-1)$ \cite{Calegari}. These eigenvalues define a map
\be \lbl{Amap} X_M \longto (\BC^*)^2/\BZ_2\,,\ee
whose image is a curve $Y_M$, the zero-locus of the A-polynomial $A_M(\ell,m)=0$ \cite{CCGLS}. 
We will denote the representation $\rho$ as $\rho_m$ to emphasize its meridian eigenvalue.

Now, given a triangulation $\CT$, and with $\mb A$, $\mb B$, and $\eta$ defined as in Section \ref{sec.NZ}, the gluing equations \eqref{glueeqs} can easily be deformed to account for $m\neq 1$. Namely, we find \cite{NZ}
\be 
\lbl{uglue} 
\prod_{j=1}^Nz_j{}^{\mb A_{ij}}z_j''{}^{\mb B_{ij}} = (-1)^{\eta_i} m^{2\delta_{iN}}\,. 
\ee
The developing map \eqref{eq.dev} maps every solution of these equations to a representation $\pi_1(M)\longto \PSL(2,\BC)$ with meridian eigenvalue $\pm m$. The triangulation $\CT$ is $\rho_m$-regular if and only if $\rho_m$ is in the image of this map. We can similarly express the longitude eigenvalue as a product of shape parameters
\be 
\lbl{uglueL} 
\prod_{j=1}^N z_j{}^{2 C_j}z_j''{}^{2D_j} = (-1)^{2\eta_\lambda}\ell^2\,,\ee
for some $2 C_j,2 D_j,2\eta_\lambda \in \BZ$.
Then, if $\CT$ is a $\rho_m$-regular triangulation, the irreducible component of $Y_M$ containing $\rho_m$ is explicitly obtained by eliminating all shapes $z_j$ from \eqref{uglue}--\eqref{uglueL}. 

In general, the shapes $z_j$ are algebraic functions on components of the variety $Y_M$. However, if $M$ is hyperbolic and $\CT$ is regular for all but finitely many representations on the geometric component $Y_M^{\rm geom}$, then the shapes $z_j$ become rational functions, $z_j\in C(Y_M^{\rm geom})$. We provide a proof of this fact in Appendix \ref{app.shapes}. The field of functions $C(Y_M^{\rm geom})$ may be identified with $\BQ(m)[\ell]/\big(A^{\rm geom}(\ell,m)\big)$, and the functions $z_j(\ell,m)$ can easily be obtained from equations \eqref{uglue}--\eqref{uglueL}.

\subsection{Flattening compatible with a longitude}
\lbl{sec.uflat}

In this section we define a restricted combinatorial flattening that 
is compatible with a longitude.

Recall what is a combinatorial flattening of an ideal triangulation $\CT$ from
Definition \ref{def.flattening}.
Given a simple peripheral curve $\lambda$ on the boundary of $M$ that represents a longitude --- in particular, having intersection number one with the chosen meridian $\mu$ --- we can construct the sum of combinatorial edge parameters along $\lambda$, just as in Section \ref{sec.glue}. It takes the form
\be \lambda\;:\quad \sum_{i=1}^N \big( \mb G_{N+2,i}Z_i+\mb G_{N+2,i}'Z_i'
+\mb G_{N+2,i}''Z_i''\big)\,, \ee
for integer vectors $\mb G_{N+1}$, $\mb G'_{N+2}$, $\mb G''_{N+2}$. Just as we obtained $\mb A$, $\mb B$, and $\eta$ from the edge and meridian equations (with or without deformation), we may also now define
\be \label{defCD} C_i = \frac12(\mb G_{N+2,i}-\mb G_{N+2,i}')\,,\quad
    D_i = \frac12(\mb G_{N+2,i}''-\mb G_{N+2,i}')\,,\quad \eta_\lambda = -\frac12\sum_{i=1}^N \mb G_{N+2,i}'\,.
\ee
\begin{definition}
\lbl{def.flattening.long}
A combinatorial flattening $(f,f',f'')$ is compatible with a longitude if
in addition to Equations \eqref{flattrip}--\eqref{flatG}, it also satisfies
\be
\lbl{flatGlon}
\mb G_{N+2,i}f+\mb G_{N+2,i}'f'+\mb G_{N+2,i}''f''=0 \,.
\ee
Equivalently, a combinatorial flattening compatible with the longitude
is a vector $(f,f'') \in \BZ^{2N}$ that satisfies
\be \label{uflat} \mb A f+\mb B f'' = \eta\,,\qquad C\cdot f+D\cdot f'' = \eta_\lambda\,,\ee
\end{definition}
A combinatorial flattening compatible with a longitude 
always exists \cite[Lem.6.1]{Neumann-combi}.

In the context of functions on the character variety, it is natural to deform the meridian gluing equation, and simultaneously to introduce a longitude gluing equation, in the form
\begin{subequations}
\be  \lbl{umerid}
\mu\;:\quad 
\sum_{i=1}^N \big( \mb G_{N+1,i}Z_i+\mb G_{N+1,i}'Z_i'
+\mb G_{N+1,i}''Z_i''\big)=2u
\ee
\be 
\lbl{long}
\lambda\;:\quad \sum_{i=1}^N \big( \mb G_{N+2,i}Z_i+\mb G_{N+2,i}'Z_i'
+\mb G_{N+2,i}''Z_i''\big)=2v\,, 
\ee
\end{subequations}
for some complex parameters $u$ and $v$. Upon exponentiation, these equations reduce to the expected \eqref{uglue}--\eqref{uglueL} if
\be m^2 = e^{2u}\,,\qquad \ell^2 = e^{2v}\,. \ee
(It is easy to show, following \cite{NZ}, that $C$ and $D$ as defined by \eqref{uflat} are the correct exponents for the exponentiated longitude equation \eqref{uglueL}.) If $M$ is a knot complement and we want to lift from $PSL(2,\C)$ to $SL(2,\C)$ representations, we should take $m=e^u$ and $\ell=-e^v$ and divide \eqref{long} by two before exponentiating. This provides the correct way to take a square root of the exponentiated gluing equation, \cf\ \cite{Calegari}.

The remarkable symplectic property of $\mb A$ and $\mb B$ may be extended to $C$ and $D$, even though in general $C$, $D$ are vectors of 
half-integers rather than integers.
Namely, there exists a completion of $(\mb A\;\mb B)$ to a full symplectic matrix $\left(\begin{smallmatrix}\mb A & \mb B \\ \mb C & \mb D\end{smallmatrix}\right)$ such that the \emph{bottom rows} of $\mb C$ and $\mb D$ are the vectors $C$ and $D$ \cite{NZ}. In particular, this means that
\be \mb A_N\cdot D-\mb B_N\cdot C = 1\,, \ee
where $\mb A_N,\mb B_N$ are the bottom (meridian) rows of $\mb A,\mb B$.

\subsection{Invariance of the generalized torsion}
\label{sec.utorsion}

We finally have all the required ingredients for the generalized torsion 
formula. Let $M$ be a three-manifold with torus boundary, and 
$\rho_m:\pi_1(M)\to \text{(P)SL}(2,\C)$ a representation with meridian 
eigenvalue $m$. Let $\CT$ be a $\rho_m$-regular triangulation of $M$, 
which exists by Lemma \ref{lem.wellknown} at least for a dense set of 
representations on the geometric component of the character variety. 
Choose an enhanced Neumann-Zagier datum $(z,\mb A,\mb B,f)$, with
$z=z(\rho_m)$ satisfying the deformed gluing equations \eqref{uglue} and 
$f$ satisfying \eqref{uflat}. Then, as in \eqref{utor}, we define
\be 
\notag
\tau_\CT(\rho_m) := \pm\frac12\det\big(\mb A\Delta_{z''}
+\mb B\Delta_z^{-1}\big)z^{f''}z''{}^{-f}  \,.
\ee
We can now prove Theorem \ref{thm.utor}.

Repeating verbatim the arguments of Section \ref{sec.torsion}, it is easy
to see that $\tau_{\CT}$ is independent of a choice of quad type, a choice of an
edge of $\CT$ and a choice of a meridian loop. The crucial 
observation is that the equations $\mb AZ+\mb B Z''=i\pi \eta$ (including the meridian equation) are never 
used in the respective proofs. Therefore, deforming the meridian equation by $u\neq 0$ does not affect anything. For the same reason, 
it is not hard to see that the formula is invariant under $\rho_m$-regular 
2--3 moves, by repeating the argument of Section \ref{sec.inv23}.

The only nontrivial verification required is that $\tau_{\CT}$ is independent of the choice of flattening. This does use the gluing equations in a 
crucial way.  We check it now for $m\neq 1$.

Choose logarithms $(Z,Z',Z'')$ of the shape parameters and a logarithm $u$ of $m$ such that $Z+Z'+Z''=i\pi$ and
\be 
\lbl{uveceqs} 
\mb AZ+\mb BZ''=2\bm u+i\pi \eta\,, 
\ee
where $\bm{u}$ denote the $N$-dimensional vector $(0,0,....,0,u)^T$.
By independence of quad type and Lemma \ref{lemma.B}, we may assume we are using a quad type with non-degenerate $\mb B$.
Now, suppose that $(f,f',f'')$ and $(\wt f,\wt f',\wt f'')$ are two 
different generalized flattenings. Then:
\begin{align*} 
(Z\cdot f''-Z''\cdot f)-&(Z\cdot \wt f''-Z''\cdot \wt f) \\&=
 Z\cdot (f''-\wt f'') + \mb B^{-1}(\mb A Z-i\pi \eta -2\bm u)\cdot (f-\wt f) \\
 &= Z\cdot (f''-\wt f'') 
+  Z\cdot \mb B^{-1}\mb A(f-\wt f)-i\pi\mb B^{-1}\eta \cdot (f-\wt f) 
-2\mb B^{-1}\bm u\cdot(f-\tilde f) \\
 &= -i\pi\mb B^{-1}\eta\cdot (f-\wt f) -2\mb B^{-1}\bm u\cdot(f-\tilde f)\\
 &= i\pi (f''\cdot \wt f-f\cdot \wt f'') -2\mb B^{-1}\bm u\cdot(f-\tilde f)\,,
\end{align*}
by manipulations similar to those of Section \ref{sec.invflat}.
The new term 
$2\mb B^{-1}\bm u\cdot(f-\tilde f)$ is now dealt with with by completing the 
Neumann-Zagier matrices $(\mb A\;\mb B)$ to a full symplectic matrix 
$\left(\begin{smallmatrix} \mb A & \mb B \\ \mb C & 
\mb D\end{smallmatrix}\right) \in \Sp(2N,\mathbb Q)$, whose bottom row agrees with $(C,D)$. The symplectic 
condition implies that $\mb A \mb D^T-\mb B \mb C^T=I$, or 
$\mb B^{-1}=\mb B^{-1}\mb A\mb D^T-\mb C^T$. Then
\begin{align*} 
\mb B^{-1}\bm u\cdot(f-\tilde f) &=
 \mb B^{-1}\mb A\mb D^T \bm u\cdot (f-\tilde f) 
- \mb C^T \bm u\cdot (f-\tilde f) \\
 &= \bm u\cdot \mb D\mb B^{-1}\mb A (f-\tilde f) 
- \bm u\cdot \mb C(f-\tilde f) \\
 &= -\bm u\cdot\big( \mb C(f-\tilde f)+\mb D(f''-\tilde f'')\big) \\
 &= -u\big(C\cdot(f-\tilde f)+D\cdot (f''-\tilde f'')\big)\,.
\end{align*}
In this last equation, only the bottom row of $\mb C$ and $\mb D$ appears, 
due to the contraction with $\bm u = (0,0,...,0,u)$. But this bottom row is 
precisely what enters the generalized flattening equations \eqref{uflat}; since both 
flattenings satisfy these equations, we must have 
$\mb B^{-1}\bm u\cdot (f-\tilde f)=0$. Therefore, upon exponentiating, we find
\be z^{f''}z''{}^{-f} = (-1)^{f''\cdot \tilde f-f\cdot \tilde f''} 
z^{\tilde f''}z''{}^{-\tilde f} = \pm z^{\tilde f''}z''{}^{-\tilde f}\,,\ee
which demonstrates that $\tau_{\CT}$ is independent of the choice of flattening. Theorem \ref{thm.utor} follows.
\qed

\subsection{Example: $\mb 4_1$ continued}
\label{sec.utor41}

We briefly demonstrate the generalized torsion formula, using representations on the geometric component of the character variety $X_{\mb{4_1}}$ for the figure-eight knot complement.

\begin{figure}[htb]
\centering
\includegraphics[width=4.3in]{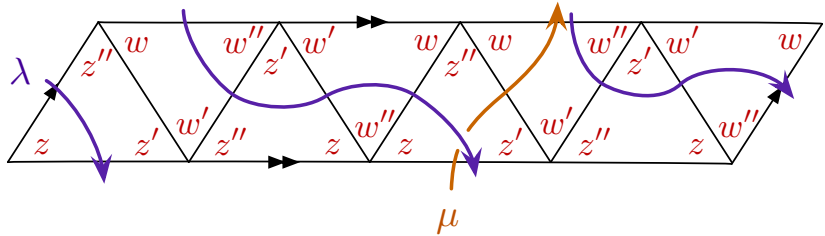}
\caption{Longitude path for the figure-eight knot complement.}
\label{fig.cusp41L}
\end{figure}

We may consider the same triangulation as in Section \ref{sec.41triang}. The edge and meridian equations \eqref{41glue} are deformed to
\be \label{41m} z^2w^2z''w''=1\,, \qquad zwz''=-m^2\,, \ee
with $z''=1-z^{-1}$, $w''=1-w^{-1}$ as usual. In addition, there is a longitude equation that may be read off from the longitude path in Figure \ref{fig.cusp41L}. In logarithmic (combinatorial) form, we have $-2Z+2Z'=2v$, or
\be \label{41L} -4Z-2Z''=2v-2\pi i\,,\ee
from which we identify
\be C = (-2,0)\,,\qquad D = (-1,0)\,,\qquad \eta_\lambda = -1.\ee
Dividing \eqref{41L} by two and exponentiating, we find
\be \label{41ell} z^{-2}z''{}^{-1}=\ell\,,\ee
with $\ell=-e^v$. This is the appropriate square root of \eqref{uglueL} for lifting the geometric representations to $\SL(2,\C)$. We can easily check it: by eliminating shape parameters from \eqref{41m} and \eqref{41ell}, we recover the geometric $\SL(2,\C)$ A-polynomial for the figure-eight knot,
\be A_{\mb{4_1}}^{\rm geom}(\ell,m) = m^4-(1-m^2-2m^4-m^6+m^8)\ell+m^4\ell^2\,. \ee

We may also use equations \eqref{41m}--\eqref{41ell} to express the shape parameters as functions of $\ell$ and $m$. We find
\be \label{zw41m} z = -\frac{m^2-m^{-2}}{1+m^2\ell} \,,\qquad w= \frac{m^2+\ell}{m^2-m^{-2}}\,.\ee
These are functions on the curve $Y_{\mb{4_1}}^{\rm geom}=\{A_{\mb{4_1}}^{\rm geom}(\ell,m)=0\}$.

The flattening \eqref{flat41} does \emph{not} satisfy the new longitude constraint $C\cdot f+D\cdot f''=\eta_\lambda$, so we must find one that does. The choice
\be (f_z,f_z',f_z'';f_w,f_w',f_w'') =(0,0,1;0,0,1) \ee
will work. Repeating the calculation of Section \ref{sub.torexample} with the same $\mb A$ and $\mb B$ but the new generalized flattening, we now obtain
\begin{align} \tau_{\mb{4_1}}(\rho_m) &= \pm \frac12 \det\begin{pmatrix} z''+1 & w''+1\\ 1 & w'' 
\end{pmatrix} z w \notag \\
&= \pm\frac12(z''w''-1)zw \notag \\
&= \pm\frac{1-m^2-2m^4-m^6+m^8-2m^4\ell}{2m^4(m^2-m^{-2})}\,. \label{41utor}
\end{align}
This is in full agreement with the torsion found by \cite{GuM, DGLZ}. Note that for fixed $m$ there are \emph{two} choices of representation $\rho_m$ on the geometric component of the character variety; they correspond to the two solutions of $A_{\mb{4_1}}^{\rm geom}(\ell,m)=0$ in $\ell$.

\begin{remark}
It is interesting to observe that the numerator of \eqref{41utor} is exactly $\pt A_{\mb{4_1}}^{\rm geom}/\pt \ell$. That the numerator of the geometric torsion typically carries a factor of $\pt A_{\mb{4_1}}^{\rm geom}/\pt \ell$ might be gleaned from the structure of ``$\hat A$-polynomials'' in \cite{DGLZ, GukovS}, and will also be explored elsewhere.
\end{remark}


\section{The state integral and higher loops}
\lbl{sec.SIM}

Our explicit formulas for the torsion $\tau_\CT$, as well as higher invariants $S_{\CT,n}$, have been obtained from a state integral model for analytically continued $\SL(2,\C)$ Chern-Simons theory. In this section, we will review 
the state integral, and analyze its asymptotics in order to re-derive the full asymptotic expansion
\be \label{asympSIM}
\CZ_\CT(\hbar) = \hbar^{-\frac32}\exp\Big[\frac1\hbar S_{\CT,0}+S_{\CT,1}+\hbar S_{\CT,2}+\hbar^2 S_{\CT,3}+\ldots\Big]\,,
\ee
and to unify the formulas of previous sections. We should point out that
this section is {\em not} analytically rigorous, but serves as a motivation
for our definition of the all-loop invariants, and provides a glimpse
into the calculus of (complex, finite dimensional) state-integrals.

The basic idea of a state integral is to cut a 
manifold $M$ into canonical pieces (ideal tetrahedra); to assign a simple 
partition function to each piece (a quantum dilogarithm); and then to 
multiply these simple partition functions together and integrate out over 
boundary conditions in order to obtain the partition function of the glued 
manifold $M$. A state integral provides a \emph{finite-dimensional reduction} 
of the full Feynman path integral on $M$.

Currently, there are two flavors of $\SL(2,\C)$ state integrals in the 
literature. The first, introduced in \cite{Hi,Hi2}, studied in \cite{DGLZ}, 
and made mathematically rigorous in \cite{AK}, is based on a 3-dimensional
lift of the 2-dimensional quantum Teichm\"uller theory in 
Kashaev's formalism \cite{Kashaev-Teich}. It uses variables associated to 
faces of tetrahedra.
The second, developed in \cite{D1}, explicitly uses shape parameters --- 
associated to edges of tetrahedra --- and constitutes a 3d lift of 
Teichm\"uller theory in the Fock-Chekhov formalism \cite{FC-Teich}. The 
two types of state integrals should be equivalent, though this has only 
been demonstrated in isolated examples so far \cite{SpirVar}.

It is the second state integral that we employ in this paper, due to its explicit dependence on shape parameters. Indeed, suppose that $M$ is an oriented one-cusped hyperbolic manifold with a $\rho_0$--regular triangulation $\CT$ and enhanced Neumann-Zagier datum $\widehat \beta_\CT=(z,\mb A,\mb B,f)$, with $\mb Af+\mb Bf''=\nu$. We must also assume that $\mb B$ is non-degenerate, which (Lemma \ref{lemma.B}) is always possible. Then we will%
\footnote{Here we multiply \eqref{appSIM} (at $u=0$) by an extra, canonical normalization factor $(2\pi/\hbar)^{3/2}$, in order to precisely match the asymptotics of the Kashaev invariant at the discrete faithful representation.} %
show in Appendix \ref{app.SIM} that the state integral of \cite{D1} takes the form
\begin{align} 
\lbl{SIM}
\CZ_{\CT}(\hbar) &= \sqrt{\frac{8\pi^3}{\hbar^3\det\mb B}}\int 
\frac{d^N\!Z}{(2\pi\hbar)^{N/2}}\,
e^{\tfrac1\hbar\big[ 
\tfrac1{2}\big(i\pi+\tfrac\hbar 2\big)^2f\cdot \mb B^{-1}\eta
-\big(i\pi+\tfrac\hbar2\big)Z\cdot\mb B^{-1}\eta
+\tfrac1{2}Z\cdot\mb{B}^{-1}\mb A Z\big]}
\prod_{i=1}^N \psi_{\hb}(Z_i)\,,  
\end{align}
where $\psi_\hbar(Z)$ is a non-compact quantum dilogarithm \cite{Ba, Fa}, the Chern-Simons partition function of a single tetrahedron. The integration variables $Z_i$ are, literally, the logarithmic shape parameters of $\CT$.

The integration contour of \eqref{SIM} is unspecified. A complete, non-perturbative definition of $\CZ_\CT(\hbar)$ requires a choice of contour, and the choice leading to invariance under 2--3 moves (etc.) may be quite subtle. However, a formal asymptotic expansion of the state integral as in \eqref{asympSIM} \emph{does not require} a choice of contour. It simply requires a choice of critical point for the integrand. Then the asymptotic series may be developed via \emph{formal Gaussian integration} in an infinitesimal neighborhood of the critical point.

We will show in Section \ref{sec.saddle} that all the leading-order critical points of \eqref{SIM} are logarithmic solutions to the gluing equations
\be \label{glueSIM} \text{critical points}\qquad \longleftrightarrow \qquad  z{}^{\mb A}(1-z^{-1}){}^{\mb B} = (-1)^\nu\,, \ee
with $z=\exp(Z)$. In particular, the critical points are isolated. Then, choosing the discrete faithful solution to \eqref{glueSIM}, we formally expand the state integral to find that
\begin{itemize}
\item $S_{\CT,0}$, the evaluation of leading-order part of the integrand at the critical point, is the 
complex volume of $M$;
\item $\exp(-2S_{\CT,1})$ is expressed as the determinant of a Hessian matrix
\be \notag
\CH = -\mb B^{-1}\mb A+\Delta_{1-z}^{-1}\,, 
\ee
with a suitable monomial correction, and reproduces the torsion 
\eqref{torintro}; and
\item the higher $S_{\CT,n}$ are obtained via a finite-dimensional Feynman 
calculus, and explicitly appear as rational functions of shape parameters.
\end{itemize}

It follows from the formalism of \cite{D1}, reviewed in Appendix \ref{app.SIM}, that the state integral \eqref{SIM} is only well defined up to multiplicative prefactors of the form
\be \label{abc}
\exp\Big( \frac{\pi^2}{6\hbar}a+\frac{i\pi}{4}b+\frac{\hbar}{24}c\Big)\,,
\qquad a,\,b,\,c\,\in\Z\,.
\ee
This means that we only obtain $\big(S_{\CT,0},\tau_\CT=4\pi^3e^{-2S_{\CT,1}},S_{\CT,2}\big)$ modulo $\big(\tfrac{\pi^2}{6}\Z,\,i,\,\tfrac{1}{24}\Z\big)$, respectively; however, all the higher invariants $S_{\CT,n\geq 3}$ should be unambiguous. Moreover, in Section \ref{sec.torsion} we saw that the ambiguity in $\tau_\CT$ could be lifted%
\footnote{It may also be possible to lift the ambiguities in $S_{\CT,0}$ and $S_{\CT,2}$ by using ordered triangulations, as in \cite{N} or \cite{Zickert-rep}.} %
to a sign $\pm 1$. Although the construction of the asymptotic series \eqref{asympSIM} appears to depend on $\CT$, we certainly expect that
\begin{conjecture}\label{conj.Sn}
The invariants $\big\{S_{\CT,n}\big\}_{n=0}^\infty$ are independent of the choice of regular triangulation and Neumann-Zagier datum (including the choice of quad type with $\det \mb B\neq 0$, etc.), up to the ambiguity \eqref{abc}, and thus constitute topological invariants of $M$.
\end{conjecture}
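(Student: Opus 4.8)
The plan is to carry out the proof entirely within the ring of formal power series $\BQ(z)[\![x,\hbar^{\frac12}]\!]$, bypassing all questions of convergence and choice of integration contour in \eqref{SIM}. The quantity to control is $\langle f_{\CT,\hbar}(x;z)\rangle$, the formal Gaussian integral with covariance $\CH^{-1}=(-\mb B^{-1}\mb A+\Delta_{z'})^{-1}$; this is well defined precisely because $\det\CH\neq 0$, which by the saddle-point analysis of Section~\ref{sec.saddle} is equivalent to $\tau_M\neq 0$. Each assertion of the conjecture then reduces to tracking how the triple $(f_{\CT,\hbar},\,\CH,\,\text{Gaussian measure})$ transforms under the four elementary changes of Neumann--Zagier datum and under a $2$--$3$ move, and to checking that it changes only by a prefactor of type \eqref{abc}.

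First I would dispose of the trivial changes. A change of redundant edge or of meridian loop replaces $(\mb A\;\mb B)$ by $(P\mb A\;P\mb B)$ and $\eta$ by $P\eta$ for some $P\in\SL(N,\BZ)$; hence $\mb B^{-1}\mb A$, $\mb B^{-1}\eta$ and $z$ are unchanged, so $f_{\CT,\hbar}$ and $\CH$ are literally unchanged, and by Remark~\ref{rem.noflattening} nothing further is needed for $n\geq 3$. A change of flattening affects $f_{\CT,\hbar}$ only through the scalar $\tfrac{\hbar}{8}f^T\mb B^{-1}\mb A f$ and the monomial $z^{f''}z''{}^{-f}$; two flattenings differ by a vector in the kernel lattice of $(\mb A\;\mb B)$, and the symplectic identities of Appendix~\ref{app.symp}, used together with the gluing equations exactly as in Section~\ref{sec.invflat}, show that the change lies in the lattice governing \eqref{abc} --- and is genuinely zero for $n\geq 3$. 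The first nontrivial input is invariance under a change of quad type: the cyclic relabelling $(z_i,z_i',z_i'')\mapsto(z_i',z_i'',z_i)$ is realised on $(\mb A\;\mb B)$ by an integral symplectic column operation (Section~\ref{sec.invgauge}) and on the integrand by the cyclic symmetry (``triality'') of the one-tetrahedron partition function, whose all-orders formal content is an identity expressing $\psi_\hbar(\cdot;z)$ through $\psi_\hbar(\cdot;z')$ and an explicit Gaussian monomial in the fluctuation variable; the resulting Jacobian and Gaussian prefactor contribute only to the \eqref{abc}-ambiguity, and the argument relates any two quad types with $\det\mb B\neq 0$ without passing through a degenerate one. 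The bookkeeping lemma underlying all of these is that the formal Gaussian integral $\langle\,\cdot\,\rangle$ is covariant, up to prefactors of type \eqref{abc}, under the $\Sp(2N,\BZ)$-action on $(\mb A\;\mb B)$ generated by these moves.

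The heart of the argument is invariance under a $2$--$3$ move, and here I would establish an \emph{asymptotic (formal) pentagon identity} for $\psi_\hbar$. With the bipyramid set up as in Figure~\ref{fig.23} and the birational shape map \eqref{map23} relating the shapes of $\wt\CT$ (with $N+1$ tetrahedra) and $\CT$ (with $N$), the state integral \eqref{SIM} for $\wt\CT$ differs from that for $\CT$ by one extra integration variable, and the exact identity $\CZ_{\wt\CT}=\CZ_\CT$ is the integral quantum pentagon identity for the non-compact quantum dilogarithm performed on that variable (non-rigorously, this is \cite{D1}). For the perturbative statement I would argue in two stages: (i) prove a formal pentagon identity, as an identity in $\BQ(z)[\![x,\hbar^{\frac12}]\!]$, by expanding both sides order by order in $\hbar^{1/2}$ and matching the functional equations that the rational functions $\Li_{2-n-k}(z^{-1})$ of \eqref{eq.psixz} satisfy under the substitution \eqref{map23}, using the discretized edge relation and the symplectic structure of the enlarged Neumann--Zagier matrices precisely where Section~\ref{sec.inv23} uses \eqref{det23} and \eqref{mon23}; (ii) invoke associativity of formal Gaussian integration (Fubini together with Wick's theorem), which lets one integrate out the extra variable first --- its Gaussian propagator governed by a Schur complement of $\wt\CH$, a compatibility whose determinant shadow is \eqref{det23} --- and identifies the $(N+1)$-dimensional formal Gaussian integral of $\wt\CT$ with the $N$-dimensional one of $\CT$. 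Combining (i), (ii) and \eqref{det23}--\eqref{mon23} then yields $\CZ_{\wt\CT}=\CZ_\CT$ modulo \eqref{abc}.

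The hard part will be step~(i): the all-orders formal pentagon identity for $\psi_\hbar$, \ie\ that the generating series of Bernoulli-weighted polylogarithms in \eqref{eq.psixz} transforms correctly under \eqref{map23} after Gaussian integration. Its leading term is the classical five-term relation for the dilogarithm and its next term is the $1$-loop computation already carried out in Section~\ref{sec.inv23}; the content of the conjecture is that these assemble coherently to all loop orders, and making this uniform in $n$ --- rather than order by order --- is the principal obstruction, likely requiring the combinatorial argument of \cite{D1} to be recast as a clean identity of formal power series. A secondary difficulty, relevant only to $S_{\CT,0},S_{\CT,1},S_{\CT,2}$, is to show that the accumulated prefactor ambiguities collapse to exactly the lattice $(\tfrac{\pi^2}{6}\BZ,\ i,\ \tfrac1{24}\BZ)$ of \eqref{abc} and no coarser --- the perturbative shadow of the projective ambiguity of the quantum dilogarithm --- which requires careful tracking of the factors of $i\pi$ and $\tfrac\hbar2$ in \eqref{SIM}. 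Once datum-independence and $2$--$3$-move invariance are in hand, restricting to the canonical subset $\calX^{\EP}_M$ and applying Proposition~\ref{prop.EP} verbatim yields the topological invariants $\{S_{M,n}\}$ of $M$.
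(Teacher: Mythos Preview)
This statement is a \emph{conjecture} in the paper, not a theorem: the paper offers no proof of it. The authors say so explicitly in Section~\ref{sec.genintro}: ``one could attempt to prove the independence of the all-loop invariants $\CZ_{\CT}(\hb)$\ldots under 2--3 moves and different choices of Neumann-Zagier datum. This was done non-rigorously in \cite{D1}, but a full mathematical argument in the spirit of Theorems~\ref{thm.indep} and~\ref{thm.1} is still missing. We hope to address this in future work.'' There is therefore no paper proof to compare against.

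Your outline is a reasonable strategy and in fact tracks closely what the authors themselves indicate would be needed: reduce everything to formal Gaussian integration, handle the easy datum changes by the same linear-algebra bookkeeping as in Section~\ref{sec.torsion}, and then attack the 2--3 move via a formal (asymptotic) pentagon identity for $\psi_\hbar$. But you have not supplied a proof --- you have located the gap and named it. Your ``step~(i)'', the all-orders formal pentagon identity in $\BQ(z)[\![x,\hbar^{1/2}]\!]$, is exactly the missing ingredient the authors flag; you correctly observe that its $0$-loop and $1$-loop shadows are the classical five-term relation and the computation of Section~\ref{sec.inv23}, but you give no mechanism for propagating this to all orders beyond ``matching functional equations'' of the $\Li_{2-n-k}$. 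That is the whole problem. Similarly, the quad-type step relies on an all-orders ``triality'' identity for the single-tetrahedron series $\psi_\hbar(x;z)$ which you invoke but do not prove, and which is of the same character as the pentagon identity.

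A smaller but genuine issue: your treatment of the ``trivial'' changes is not quite complete either. A change of redundant edge or meridian path multiplies $(\mb A\;\mb B)$ on the left by $P$ with $\det P=\pm 1$, so $\det\mb B$ may change sign; the formal Gaussian integral is insensitive to this, but the prefactor $(\det\mb B)^{-1/2}$ in \eqref{SIM} contributes to $S_{\CT,1}$ and must be tracked into the \eqref{abc}-ambiguity. And your claim that any two quad types with $\det\mb B\neq 0$ can be connected without passing through a degenerate one is not obvious from Lemma~\ref{lemma.B} and would need its own argument. None of these are fatal, but they confirm that what you have is a programme rather than a proof --- which, to be fair, is also all the paper has.
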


We now proceed to analyze the critical points and asymptotics of \eqref{SIM} in greater detail. In Section \ref{sec.uSIM}, we will also generalize the state integral to arbitrary representations, with non-unit meridian eigenvalue $m=e^u\neq 1$, and give an example of $S_{\CT,2}(m)$, $S_{\CT,3}(m)$ as functions on the character variety $Y_M^{\rm geom}$ for the figure-eight knot.

\subsection{Critical points}
\lbl{sec.saddle}

We begin by showing that the critical points of \eqref{SIM} are indeed 
solutions to the gluing equations. For this purpose, we need to know 
the quantum dilogarithm $\psi_{\hb}(Z)$. The latter is given by 
\cite[Eqn.3.22]{DGLZ}
%
\be \label{qdl} \psi_{\hb}(Z) = \prod_{r=1}^\infty \frac{1-q^r e^{-Z}} {1-({}^Lq)^{-r+1}e^{-{}^LZ}}\,,
\ee
for $|q|<1$, where
\be q := \exp \hbar,\,\quad {}^L q := \exp \frac{-4\pi^2}{\hbar}\,,
\qquad {}^LZ := \frac{2\pi i}{\hbar}Z\,.\ee
The quantum dilogarithm $\psi_{\hb}(Z)$ coincides with the restriction to
$|q|<1$ of Faddeev's quantum dilogarithm \cite{Fa}, as follows from
\cite[Eqn.3.23]{DGLZ}. $\psi_{\hb}(Z)$ is the Chern-Simons wavefunction 
of a single tetrahedron \cite{D1}.  
The quantum dilogarithm has an asymptotic expansion as $\hbar\to 0$, given by (\cf\ \cite[Eqn.3.26]{DGLZ})
\begin{align} \label{qdlasymp}
\psi_{\hb}(Z) &\overset{\hbar\to 0}{\sim} 
\exp \sum_{n=0}^\infty \frac{B_n\,\hbar^{n-1}}{n!} \wt \Li_{2-n}(e^{-Z}) \\
&= \exp\left[\frac{1}{\hbar}\wt\Li_2(e^{-Z})+\frac12\wt\Li_1(e^{-Z})
-\frac\hbar{12}z'+\frac{\hbar^3}{720}z(1+z)z'^3+\ldots\right]\,,  \notag
\end{align}
where $B_n$ is the $n^{\rm th}$ Bernoulli number, with $B_1=1/2$.

The coefficients of strictly positive powers of 
$\hbar$ (\ie\ $n\geq 2$) in the expansion are rational functions of 
$z=e^{Z}$, but the two leading asymptotics --- the logarithm and dilogarithm --- are multivalued and have branch cuts. 
In contrast, the function $\psi_{\hb}(Z)$ itself is a meromorphic function on $\C$ for 
any fixed $\hbar\neq 0$. Branch cuts in its asymptotics arise when 
families of poles collide in the $\hbar\to 0$ limit. In the case of purely imaginary $\hbar$ 
with $\mathrm{Im}\,\hbar>0$ (a natural choice in the analytic continuation 
of $\mathrm{SU}(2)$ Chern-Simons theory), a careful analysis of this 
pole-collision 
process leads to branch cuts for $\wt \Li_2$ and $\wt \Li_1$ that are 
different from the standard ones (Figure \ref{fig.cuts}). We indicate 
the modified analytic structure of these two functions (really functions 
of $Z$ rather than $e^{-Z}$) with an extra tilde.

\begin{figure}[htb]
\centering
\includegraphics[width=5.5in]{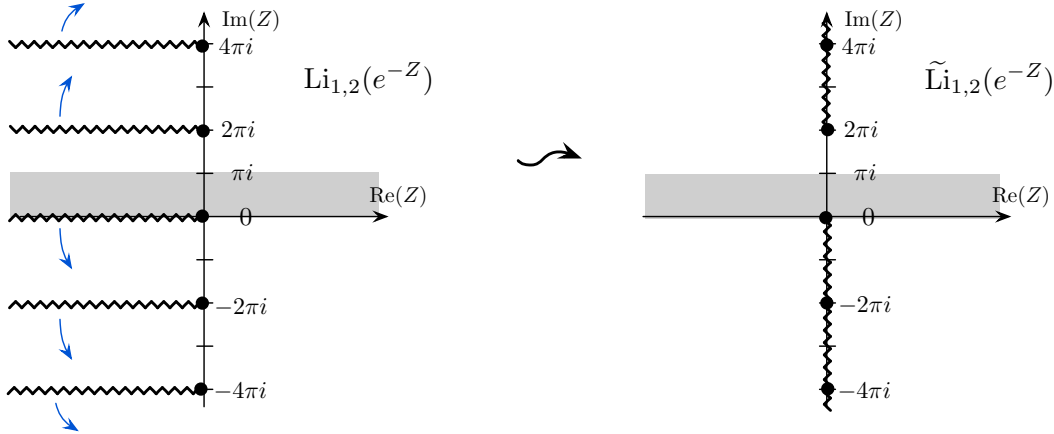}
\caption{Rotating the standard branch cuts of $\Li_2(e^{-Z})$ and 
$\Li_1(e^{-Z})$ to produce $\wt\Li_2(e^{-Z})$ and $\wt\Li_1(e^{-Z})$, as 
functions of $Z$. The shaded region indicates where the standard 
logarithms of shape parameters for the discrete faithful representation lie.}
\lbl{fig.cuts}
\end{figure}

Now, the critical points of the integrand, at leading order%
\footnote{We treat all sub-leading terms as perturbations. The exact 
location of the critical point will acquire perturbative corrections, 
described in Section \ref{sec.diags}.} %
in the $\hbar$ expansion, are solutions to
\begin{align} \notag
0 &= \frac{\partial}{\partial Z_i}\Big(-\frac{\pi^2}{2}f\cdot 
\mb B^{-1}\eta-i\pi Z\cdot\mb B^{-1}\eta+\frac1{2}Z\cdot\mb{B}^{-1}\mb A Z
+\sum_i\wt{\Li}_2(e^{-Z_i})\Big)  \\
&= -i\pi \big(\mb B^{-1}\eta)_i+\big(\mb B^{-1} \mb A Z)_i
-\wt \Li_1(e^{-Z_i})\,, \notag
\end{align}
in other words,
\be \mb A Z+\mb B (-\wt\Li_1(e^{-Z}))=i\pi \eta\,. \lbl{crit} 
\ee
Since $\exp[-\wt\Li_1(e^{-Z_i})]=1-z_i^{-1}$, we see that every 
solution to \eqref{crit} is a particular logarithmic lift of a solution 
to the actual gluing equations $z^{\mb A}(1-z^{-1})^{\mb B}=(-1)^\eta$. It is a 
lift that precisely satisfies the logarithmic constraints \eqref{NZlin} 
of Section \ref{sec.triang}, with $Z_i''=-\wt\Li_1(e^{-Z_i})$.

When $0\leq {\rm Im}\,Z_i\leq \pi$, the branches of the standard 
logarithms and dilogarithms agree with those of the modified ones. 
In particular, given the discrete faithful solution to 
$z^{\mb A}(1-z^{-1})^{\mb B}=(-1)^n$, taking standard logarithms 
immediately produces a solution to \eqref{crit}. 
Therefore, the discrete faithful representation always corresponds to a 
critical point of the state integral.

\subsection{Volume}
\lbl{sec.vol}

By substituting a solution to \eqref{crit} back into the $\hbar^{-1}$ 
(leading order) part of the integrand, we obtain the following formula for the complex 
volume of a representation: 
\be 
S_{\CT,0} = -\frac{\pi^2}{2}f\cdot \mb B^{-1}\eta
-i\pi Z\cdot\mb B^{-1}\eta+\frac1{2}Z\cdot\mb{B}^{-1}\mb A Z 
+\sum_i\wt{\Li}_2(e^{-Z_i}) \qquad \Big({\rm mod} \;\;\frac{\pi^2}{6}\Big)\,. 
\ee
Some manipulation involving the flattening can be used to 
recast this as 
\be \lbl{vol}
\boxed{ S_{\CT,0} = -\frac12 (Z-i\pi f)\cdot (Z''+i\pi f'')
+\sum_i\wt\Li_2(e^{-Z_i})} \qquad \Big({\rm mod} \;\;\frac{\pi^2}{6}\Big)\,,
\ee
where $Z_i'' := -\wt \Li_1(e^{-Z_i})$. It is straightforward 
to verify that this formula is independent of the choice of quad type,
choice of edge of $\CT$, choice of meridian loop, choice of
flattening, and 2--3 moves defines a topological invariant, which agrees
with the complex Chern-Simons invariant of $M$. Since the complex volume 
in this form has already been studied at length in the literature, 
we suppress the details here.

At the discrete faithful representation, we can remove the ``tildes''
from the logarithm and dilogarithm. If we consider the discrete faithful 
solution to $z^{\mb A}(1-z^{-1})^{\mb B}=(-1)^n$, and take standard logarithms 
$Z_i=\log z_i$, $Z_i''=\log(1-z_i^{-1})$ 
(with $0\leq {\rm Im}\,Z,\,{\rm Im}\,Z''\leq \pi$), we find
\begin{align}
S_{\CT,0} &= \,i({\rm Vol}(M)-i\,{\rm CS}(M)) \notag \\ &= 
-\frac12 (Z-i\pi f)\cdot (Z''+i\pi f'')+\sum_i\Li_2(e^{-Z_i}) 
\qquad \Big({\rm mod} \;\;\frac{\pi^2}{6}\Big)\,.
\end{align}
This is a version of the simple formula for the complex volume given in 
\cite{Neumann-combi}. It is known that the ambiguity in the volume can be 
lifted from $\pi^2/6$ to $2\pi^2$ using more refined methods 
\cite{N,DuZi,GoZi,Zickert-rep}.

\subsection{Torsion revisited}
\lbl{sec.torsion2}

Next, we can derive our torsion formula \eqref{torintro}. The torsion comes from the $\hbar^0$ part in the asymptotic expansion of the state integral,
which has several contributions.

From formal Gaussian integration around a critical point \eqref{crit}, 
we get a determinant $(2\pi \hbar)^{N/2}\big(\det \CH\big)^{-1/2}$, where
\begin{align} \CH_{ij} &= -\frac{\partial^2}{\partial Z_i
\partial Z_j}\Big(-\frac{\pi^2}{2}f\cdot \mb B^{-1}\eta-i\pi Z\cdot
\mb B^{-1}\eta+\frac1{2}Z\cdot\mb{B}^{-1}\mb A Z+\sum_i\wt{\Li}_2(e^{-Z_i})\Big) 
\notag \\
 &= \boxed{ (-\mb B^{-1}\mb A+\Delta_{z'})_{ij}} \lbl{Hess}
\end{align}
is the Hessian matrix of the exponent (at leading order $\hbar^{-1}$). 
Here $\Delta_{z'}:=\diag(z_1',...,z_N')$, with $z_i'=(1-z_i)^{-1}$ as usual. 
Multiplying the determinant is the $\hbar^0$ piece of the integrand, 
evaluated at the critical point. From the $\hbar^0$ part of the quadratic 
exponential, we get
\begin{align} \exp\Big(\frac{i\pi}{2}f\cdot \mb B^{-1}\eta
-\frac12 Z\cdot \mb B^{-1}\eta\Big)
 &= \exp\Big( \frac12 f\cdot (\mb B^{-1}\mb A Z+Z'')
-\frac12 Z\cdot (\mb B^{-1}\mb A f+f'')\Big) \notag \\
 &= \exp\Big(-\frac12 Z\cdot f''+\frac12 Z''\cdot f\Big) \notag \\
 & = \big(z^{f''}z''{}^{-f}\big)^{-1/2}\,, \lbl{torcorr1}
\end{align}
whereas from the quantum dilogarithm at order $\hbar^0$ we find
\be 
\exp\Big(\frac12\sum_i\wt\Li_1(e^{-Z_i})\Big) 
= \pm\prod_i \frac{1}{\sqrt{1-z_i^{-1}}}=\pm\det \Delta_{z''}^{-1/2}\,. 
\lbl{torcorr2} 
\ee
Combining the determinant $(2\pi \hbar)^{N/2}\big(\det \CH\big)^{-1/2}$, 
the corrections \eqref{torcorr1}--\eqref{torcorr2}, and the overall 
prefactor $\sqrt{\frac{8\pi^3}{\det \mb B}}(2\pi\hbar)^{-N/2}$ in the 
integral \eqref{SIM} itself, we finally obtain
\be
e^{S_1} = \sqrt{\frac{8\pi^3}{ \det\mb B \det(-\mb B^{-1}\mb A
+\Delta_{z'})\det\Delta_{z''}z^{f''}z''{}^{-f}}} = 
\sqrt{\frac{-8\pi^3}{\det(\mb A\Delta_{z''}
+\mb B\Delta_z^{-1})z^{f''}z''{}^{-f}}}\,, 
\ee
up to multiplication by a power of $i$; or
\be \tau_M := 4\pi^3e^{-2S_1} = \pm\frac12\det(\mb A\Delta_{z''}
+\mb B\Delta_z^{-1})z^{f''}z''{}^{-f}\,,\ee
just as in \eqref{torintro}. Despite the fact that the original 
state integral only made sense for non-degenerate $\mb B$, the final
formula for the torsion is well defined for any $\mb B$.

\subsection{Feynman diagrams and higher loops}
\lbl{sec.diags}

The remainder of the invariants $S_{\CT,n}$ can be obtained by continuing 
the saddle-point (stationary phase) expansion of the state integral to higher order. The 
calculation can be systematically organized into a set of Feynman rules 
(\cf\ \cite[Ch.9]{MirrorSym}, \cite{BIZ,Polyak}). The resulting formulas --- summarized in the Introduction --- are explicit 
algebraic functions of the exponentiated shape parameters $z_i$, and 
belong to the invariant trace field $E_M$.

To proceed, we should first re-center the integration around a 
critical point. Thus, we replace $Z\to Z+\zeta$ and 
integrate over $\zeta$, assuming $Z$ to be a solution to \eqref{crit}. 
Using \cite[Eqn.3.26]{DGLZ}, we expand as follows:
\begin{align} \notag
Z_{\CT}(\hbar) 
&=  \sqrt{\frac{8\pi^3}{\hbar^3\det\mb B}}\int 
\frac{d^N\! \zeta}{(2\pi\hbar)^{N/2}} \prod_{i=1}^N \psi_{\hb}(Z_i+\zeta_i) 
\\ &\hspace{.5in}\times 
e^{\tfrac1\hbar\big[ \tfrac1{2}\big(i\pi+\tfrac\hbar 2\big)^2f\cdot 
\mb B^{-1}\eta-\big(i\pi+\tfrac\hbar2\big)(Z+\zeta)\cdot\mb B^{-1}\eta
+\tfrac1{2}(Z+\zeta)\cdot\mb{B}^{-1}\mb A (Z+\zeta)\big]}
 \notag\\
&\sim \sqrt{\frac{8\pi^3}{\hbar^3\det\mb B}}\,e^{\Gamma^{(0)}(Z)}\int 
\frac{d^N\! \zeta}{(2\pi\hbar)^{N/2}}\exp\Big[-\frac{1}{2\hbar}
\zeta\cdot\CH(Z)\cdot \zeta+\sum_{k=1}^\infty\sum_{i=1}^N 
\frac{\Gamma^{(k)}_i(Z)}{k!}\,\zeta_i^k\Big].
\label{Feyngen}
\end{align}
In this form, the first coefficient $\Gamma^{(0)}(Z)$ can be identified with an overall \emph{vacuum energy}, while the rest of the $\Gamma_i^{(k)}(Z)$ are \emph{vertex factors}.

Every $\Gamma^{(k)}(Z)$ here is a series in $\hbar$, in general 
starting with a $1/\hbar$ term. However, $\Gamma_i^{(1)}$ must vanish 
at leading order $\hbar^{-1}$ precisely because $Z$ is a solution to 
the leading-order critical point equations; and we have also already 
extracted the leading $\hbar^{-1}$ piece of $\Gamma_i^{(2)}$ as the Gaussian 
integration measure $-\tfrac{1}{2\hbar}\zeta\CH\zeta$.
Typically, 1-vertices and 2-vertices are absent from a 
Feynman calculus. Here, however, they appear because our critical 
point equation and the Hessian (respectively) are only accurate at 
leading order, and incur $\hbar$-corrections. (Note that the 1-vertices and 2-vertices are counted separately in \eqref{effloop} below.)

The vacuum energy $\Gamma^{(0)}$ contributes to every $S_{\CT,n}$, $n\geq 0$. Its leading-order $\hbar^{-1}$ term is just the complex volume \eqref{vol}, while the $\hbar^0$ piece contains 
the corrections \eqref{torcorr1}--\eqref{torcorr2} to the torsion. At 
higher order in $\hbar$, we have
\be \Gamma^{(0)}(Z) = \frac{1}{\hbar}S_0+\hbar^0(...)
+\frac{\hbar}{8}f\cdot\mb B^{-1}\mb A f + \sum_{n=2}^\infty 
\frac{\hbar^{n-1} B_{n}}{n!}\sum_{i=1}^N \Li_{2-n}(z_i^{-1})\quad\; 
\Big({\rm mod}\;\frac{\hbar}{24}\Big)\,. \ee

Each $S_n$, $n\geq 2$, is calculated by taking the $\hbar^{n-1}$ part 
of $\Gamma^{(0)}$, and adding to it an appropriate sum of Feynman diagrams. 
The rules for the diagrams are derived from \eqref{Feyngen} as follows. There are vertices of all 
valencies $k=1,2,...$, with a vertex factor given by $\Gamma^{(k)}_i$. One draws all connected diagrams (graphs) with
\be \lbl{effloop}
\boxed{\text{\# loops + \# 1-vertices + \# 2-vertices $\leq n$}\,.}
\ee
Each $k$-vertex is assigned a factor $\Gamma_i^{(k)}$, and each edge is assigned a propagator
\be 
\lbl{prop}
\text{propagator}:\qquad \Pi_{ij}:= \hbar\,\CH^{-1}_{ij} \,
=\, \hbar(-\mb B^{-1}\mb A+\Delta_{z'})^{-1}_{ij}\,.
\ee
The diagrams are then evaluated by contracting the vertex factors with propagators, and multiplying by a standard \emph{symmetry factor}. In each diagram, one should restrict to the $\hbar^{n-1}$ term in its evaluation.

Explicitly, using the asymptotic expansion \eqref{qdlasymp} of the quantum dilogarithm, we find that the vertices are 
\begin{subequations} 
\lbl{vx}
\begin{align}
\text{1-vertex:}\quad \Gamma^{(1)}_i &= -\frac12(\mb B^{-1}\eta)_i
-\sum_{n=1}^\infty \frac{\hbar^{n-1}B_n}{n!}\Li_{1-n}(z_i^{-1}) 
= -\frac12(\mb B^{-1}\eta)_i  +\frac{z'_i}{2}+\ldots\,,\\
\text{2-vertex:}\quad \Gamma^{(2)}_i &= \sum_{n=1}^\infty 
\frac{\hbar^{n-1}B_n}{n!}\Li_{-n}(z_i^{-1}) = \frac{z_iz_i'^2}{2}
-\frac\hbar{12}z_i(1+z_i)z_i'^3+\ldots\\
\text{$k$-vertex:}\quad\Gamma^{(k)}_i &= (-1)^k \sum_{n=0}^\infty 
\frac{\hbar^{n-1}B_n}{n!}\Li_{2-n-k}(z_i^{-1}) \qquad\qquad (k\geq 3)
\end{align}
\end{subequations}
Note that in $\Gamma_i^{(1)}$ we could also write 
$\mb B^{-1}\eta=\mb B^{-1}\mb Af+f''$.
When the inequality \eqref{effloop} is saturated, only the leading-order ($\hbar^{-1}$ or $\hbar^0$) 
terms of the vertex factors \eqref{vx} need be considered. Otherwise, subleading $\hbar$-corrections may be necessary.

Examples of 2-loop and 3-loop Feynman diagrams were given in Figures \ref{fig.diags2intro}--\ref{fig.diags3dot} of the Introduction, along with the entire evaluated expression for $S_{\CT,2}$.

\subsection{$n$-loop invariants on the character variety}
\label{sec.uSIM}

Just as we extended the torsion formula to general representations $\rho\in X_M$ in Section \ref{sec.extensions}, we may now generalize the entire state integral. The basic result for the higher invariants $S_{\CT,n}$ is that their formulas remain completely unchanged. The shapes $z_i$ simply become functions of the representation $\rho$, and satisfy deformed gluing equations \eqref{uglue}--\eqref{uglueL}. One must also make sure to use a generalized flattening whenever it occurs, just as in Section \ref{sec.extensions}.

We note that, for a hyperbolic knot complement $M=S^3\backslash K$, the generalized Chern-Simons state integral $\CZ_M(u;\hbar)$ is expected to match the asymptotic expansion of the colored Jones polynomials $J_N(K;q)$. Specifically, one should consider the limit
\be N\to \infty\,,\quad \hbar\to 0\,,\qquad q^N=e^{N\hbar}=e^{2u}\quad\text{fixed}\,,\ee
where $m=e^u$ is the meridian eigenvalue for a geometric representation $\rho_m$ in the neighborhood of the discrete faithful. This is the full \emph{Generalized Volume Conjecture} of \cite{Gu}.

To see how formulas for the generalized invariants $S_{\CT,n}$, $n\geq 0$, come about, consider the state integral at general meridian eigenvalue $m=e^u$. From \eqref{appSIM} of Appendix \ref{app.SIM}, we find
\begin{align} \notag
\CZ_\CT(u;\hbar) 
&=  \sqrt{\frac{8\pi}{\hbar^3\det\mb B}}\int \frac{d^N\!Z}{(2\pi \hbar)^{N/2}} \, \prod_{i=1}^N \psi_\hbar(Z_i)\,e^{-\tfrac1{2\hbar}Z\cdot \mb B^{-1}\mb A Z}\\
 &\hspace{1in} \times
e^{\tfrac1\hbar\Big[
2 \bm u\cdot \mb D\mb B^{-1}\bm u+(2\pi i+\hbar)f\cdot 
\mb B^{-1}\bm u
+\tfrac12\big(i\pi+\tfrac\hbar2\big)^2 f\cdot \mb B^{-1} \eta 
- Z\cdot\mb B^{-1}\big(2\bm u+\big(i\pi+\tfrac\hbar2\big)\eta\big)\Big]}    \,,
 \label{uSIM}
\end{align}
where $\bm u:=(0,...,0,u)$ and $\mb D$ is the block appearing in any completion of the Neumann-Zagier matrices $(\mb A\;\mb B)$ to $\left(\begin{smallmatrix}\mb A&\mb B\\\mb C&\mb D\end{smallmatrix}\right)\in Sp(2N,\mathbb{Q})$, such that the bottom row $D$ of $\mb D$ appears in the longitude gluing equation $C\cdot Z+D\cdot Z''=v+2\pi i\nu_\lambda$ (Section \ref{sec.uflat}). Indeed, since we are contracting with $\bm u$, only this bottom row of $\mb D$ really matters in \eqref{uSIM}.

The critical points of the state integral are now given by
\be \label{ucrit} \mb A Z+\mb B Z''=2\bm u+i\pi\nu\,,\ee
with $Z'':=-\wt\Li_1(e^{-Z})$. As expected, this is the logarithmic form of the deformed gluing equation \eqref{uglue}. Thus, all critical points correspond to representations $\rho=\rho_m\in X_M$. The multivalued nature of this equation must be carefully studied to make sure desired solutions actually exist. However, for example, representations on the geometric component $X_M^{\rm geom}$ always exist in a neighborhood of the discrete faithful representation, if we choose $u=\log m$ to be close to zero (and use a regular triangulation).

We then start expanding the state integral around a critical point, setting
\be \CZ_{\CT}(u;\hbar) \sim \hbar^{-\frac32}\exp\Big[\frac1\hbar S_{\CT,0}(u)+S_{\CT,1}(u)+\hbar S_{\CT,2}(m)+\hbar^2 S_{\CT,3}(m)+...\Big]\,.
\ee
The leading contribution $S_{\CT,0}(\rho)$ is given, following some standard manipulations using the generalized flattening, by
\be \label{S0u}
S_{\CT,0}(u) =   u\,v(u)-\frac12 (Z-i\pi f)\cdot (Z''+i\pi f'') 
+ \sum_{i=1}^N\wt\Li_2(e^{-Z})\qquad \Big(\text{mod}\;\frac{\pi^2}{6}\Big)\,.
\ee
Here we write $S_{\CT,0}$ as a function of the logarithmic meridian eigenvalue $u$, though a fixed choice of representation $\rho$ will implicitly fix the choice of longitude eigenvalue $v=\log(-\ell)$ as well. Expression \eqref{S0u} is a holomorphic version of the complex volume of a cusped manifold with deformed cusp. Explicitly,
\be S_{\CT,0}(u) = i({\rm Vol}_M(u)+i{\rm CS}_M(u)) -2v\,\Re(u)\,.\ee
This is the correct form of the complex volume to use in the Generalized Volume Conjecture, \cf\ \cite{GuM}.

At first subleading order, we re-derive the generalized torsion formula. The calculation is identical to that of Section \ref{sec.torsion2}, with the exception of the correction \eqref{torcorr1} coming from the $\hbar^0$ part of the exponential. This correction now becomes
\be \exp\Big[ f\cdot \mb B^{-1}\bm u+\frac{i\pi}{2}f\cdot \mb B^{-1}\nu-\frac12Z\cdot \mb B^{-1}\nu\Big]\,. \label{utorcorr1}\ee
To simplify this correction, we must use $\mb Af+\mb Bf''=\nu$ and a deformed gluing equation $\mb A Z+\mb B Z''=2\bm u+i\pi \nu$. The $\bm u$-dependent part of the gluing equation cancels the new $\bm u$-dependent term in \eqref{utorcorr1}, ultimately leading to the same result
\be \exp\Big[ f\cdot \mb B^{-1}\bm u+\frac{i\pi}{2}f\cdot \mb B^{-1}\nu-\frac12Z\cdot \mb B^{-1}\nu\Big] = \big(z{}^{f''}z''{}^{-f}\big)^{-1/2}\,, \notag
\ee
and therefore the same torsion%
\footnote{The normalization of the torsion here differs from the torsion at the discrete faithful by a factor of $\pi^2$. In fact, we intentionally changed the normalization of the entire state integral \eqref{uSIM} by $\pi^2$. This is because we wanted the state integral to match the asymptotics of the colored Jones polynomials exactly, and the asymptotics happen to jump by $\pi^2$ when $u\neq 0$, \cf\ \cite{GuM}.}
\be \tau_\CT = 4\pi e^{-2S_{\CT,1}} = \frac12\det\big(\mb A\Delta_{z''}
+\mb B\Delta_z^{-1}\big)z^{f''}z''{}^{-f}\,.\ee

Finally, we can produce a generalized version of the Feynman rules of Section \ref{sec.diags}. We note, however, that the $u$-dependent terms in \eqref{uSIM} do not contribute to either the vacuum energy $\Gamma^{(0)}$ (at order $\hbar^1$ or higher), the propagator, or the vertex factors $\Gamma^{(k)}_i$. Therefore, the Feynman rules must look exactly the same. The only difference is that the critical point equation \eqref{ucrit} requires us to use shape parameters that satisfy the generalized gluing equations.

\subsection{Example: $\mb{4_1}$ completed}
\label{sec.41Sn}

We may demonstrate the power of the Feynman-diagram approach by computing the first two subleading corrections $S_{\CT,2}$ and $S_{\CT,3}$ for the figure-eight knot complement.

We can use the same Neumann-Zagier datum described in Section \ref{sec.41triang}, along with the generalized flattening of Section \ref{sec.utor41}. Let us specialize to representations $\rho_m$ on the geometric component of the character variety. Then the two shapes $z,\,w$ are expressed as functions on the A-polynomial curve,
\be \label{zw412} z = -\frac{m^2-m^{-2}}{1+m^2\ell} \,,\qquad w= \frac{m^2+\ell}{m^2-m^{-2}}\,,\ee
as in \eqref{zw41m}.

The 2-loop invariant is explicitly given in \eqref{2loopexplicit} of the Introduction. Evaluating this expression in \texttt{Mathematica}, we find
\be\notag S_{\mb{4_1},2}=-\frac{w^3 (z+1)+w^2 ((11-8 z) z-4)+w (z-1) (z (z+12)-5)+(z-2) (z-1)^2}{12 (w+z-1)^3}\,. \ee
Upon using \eqref{zw412} to substitute rational functions for $z$ and $w$, the answer may be most simply expressed as
\be \wt S_{\mb {4_1},2}= \frac{S_{\mb {4_1},2}+1/8}{\tau_{\mb {4_1}}^3} = -\frac{1}{192}\big(m^{-6}-m^{-4}-2m^{-2}+15-2m^2-m^4+m^6\big)\,,\ee
where we have divided by a power of the torsion as suggested in \eqref{tordivide} of the Introduction. (We have also absorbed a constant $1/8$, recalling that our formula is only well defined modulo $\BZ/24$.

In a similar way, we may calculate the 3-loop invariant, finding unambiguously
\be \wt S_{\mb{4_1},3} = \frac{S_{\mb{4_1},3}}{\tau_{\mb{4_1}}^6}=\frac{1}{128}\big(m^{-6}-m^{-4}-2m^{-2}+5-2m^2-m^4+m^6\big)\,.\ee

These answers agree perfectly with the findings of \cite{DGLZ}, and the comparison there to the asymptotics of the colored Jones polynomials at general $u$. Moreover, at the discrete faithful representation we obtain
\be S_{\mb {4_1},2} = \frac{11i}{72\sqrt{3}}= -\frac{11}{192\,\tau_{\mb{4_1}}^{3}}\,,\qquad S_{\mb{4_1},3}=-\frac1{54} = \frac{1}{128\,\tau_{\mb{4_1}}^{6}}\,,\ee
in agreement with known asymptotics of the Kashaev invariant.

\appendix


\section{Symplectic properties of A and B}
\lbl{app.symp}

The $N\times N$ Neumann-Zagier matrices $\mb A$ and $\mb B$ form the top 
half of a symplectic matrix 
$\left(\begin{smallmatrix} \mb A & \mb B  \\ \mb C & \mb D 
\end{smallmatrix}\right) \in \Sp(2N,\mathbb Q)$ \cite{NZ}. In this section
we discuss some elementary properties of symplectic matrices.

\begin{lemma}
\lbl{lem.halfAB}
The $N \times 2N$ matrix $(\mb A \; \mb B)$ is the upper half of a 
symplectic matrix if and only if 
$\mb A \mb B^T$ is symmetric and $(\mb A \; \mb B)$ has maximal rank $N$.
\end{lemma}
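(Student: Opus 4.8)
The plan is to rephrase "upper half of a symplectic matrix'' as a system of block matrix equations and solve it directly. Write $J=\left(\begin{smallmatrix}0&I\\-I&0\end{smallmatrix}\right)$, so that $M=\left(\begin{smallmatrix}\mb A&\mb B\\ \mb C&\mb D\end{smallmatrix}\right)\in\Sp(2N,\BQ)$ precisely when $MJM^T=J$; expanding in $N\times N$ blocks, this is equivalent to the three conditions $\mb A\mb B^T=\mb B\mb A^T$, $\mb C\mb D^T=\mb D\mb C^T$, and $\mb A\mb D^T-\mb B\mb C^T=I$ (the fourth block equation being the transpose of the last). The "only if'' direction is then immediate: a symplectic $M$ is invertible, hence its first $N$ rows $(\mb A\ \mb B)$ are linearly independent, i.e.\ $\rank(\mb A\ \mb B)=N$, and the $(1,1)$ block equation is exactly the symmetry of $\mb A\mb B^T$. (Geometrically, this symmetry says the row span of $(\mb A\ \mb B)$ is an isotropic --- hence, being $N$-dimensional, Lagrangian --- subspace of the standard symplectic $\BQ^{2N}$, and the statement amounts to the fact that a basis of a Lagrangian subspace extends to a symplectic basis; I will spell this out in matrix terms below.)

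For the "if'' direction I would construct $\mb C,\mb D$ in two stages. First, since $(\mb A\ \mb B)$ has full row rank, the map $\BQ^{2N}\to\BQ^N$, $\left(\begin{smallmatrix}p\\ q\end{smallmatrix}\right)\mapsto \mb A p+\mb B q$, is onto, so solving column by column I can find $N\times N$ matrices $\mb C_0,\mb D_0$ with $\mb A\mb D_0^T-\mb B\mb C_0^T=I$. Second, I would parametrize all solutions of this affine equation: its homogeneous part is the kernel of $(\mb A\ \mb B)$, which has dimension $N$; since $\mb A\mb B^T=\mb B\mb A^T$, the image of $v\mapsto(\mb B^Tv,-\mb A^Tv)$ lies in this kernel, and it too has dimension $\rank\left(\begin{smallmatrix}\mb B^T\\ \mb A^T\end{smallmatrix}\right)=\rank(\mb A\ \mb B)=N$, so the two coincide. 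Hence the general solution is $\mb C=\mb C_0+\mb S^T\mb A$, $\mb D=\mb D_0+\mb S^T\mb B$ with $\mb S$ an arbitrary $N\times N$ matrix, all of which still satisfy $\mb A\mb D^T-\mb B\mb C^T=I$.

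It then remains to pick $\mb S$ enforcing the condition $\mb C\mb D^T=\mb D\mb C^T$. Substituting the parametrization and using $\mb A\mb B^T=\mb B\mb A^T$ together with $\mb A\mb D_0^T-\mb B\mb C_0^T=I$, the terms quadratic in $\mb S$ cancel and a short computation leaves $\mb C\mb D^T-\mb D\mb C^T=\mb T-\mb S+\mb S^T$, where $\mb T:=\mb C_0\mb D_0^T-\mb D_0\mb C_0^T$ is antisymmetric; the choice $\mb S=\tfrac12\mb T$ makes this vanish, and the resulting $M$ is symplectic over $\BQ$. There is no serious obstacle: the one point deserving care is the identification of the kernel of $(\mb A\ \mb B)$ with the image of $\left(\begin{smallmatrix}\mb B^T\\ -\mb A^T\end{smallmatrix}\right)$, which is precisely where the symmetry hypothesis on $\mb A\mb B^T$ gets used, and the factor $\tfrac12$ is the only place one would have to work harder if an integral (rather than rational) symplectic completion were demanded --- which the lemma does not claim.
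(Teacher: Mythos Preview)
Your proof is correct. The paper's own proof is a two-sentence sketch: it observes that the rows of $(\mb A\;\mb B)$ have pairwise vanishing symplectic product iff $\mb A\mb B^T$ is symmetric, that they span an $N$-dimensional subspace iff the rank is $N$, and then simply says ``the result follows'' --- implicitly invoking the standard fact that any basis of a Lagrangian subspace extends to a symplectic basis of $\BQ^{2N}$. You note this geometric interpretation in your parenthetical remark and then give an explicit matrix realization of exactly that extension, so the underlying idea is the same; you have simply unpacked what the paper leaves to the reader. Your computation of $\mb C\mb D^T-\mb D\mb C^T=\mb T-\mb S+\mb S^T$ checks out, as does the identification of $\ker(\mb A\;\mb B)$ with the image of $\left(\begin{smallmatrix}\mb B^T\\-\mb A^T\end{smallmatrix}\right)$, and your closing remark about the factor $\tfrac12$ being the obstruction to an integral completion is apt.
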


\begin{proof}
It is easy to see that the rows of $(\mb A \; \mb B)$ have zero symplectic 
product (with respect to the standard symplectic form on $\mathbb{Q}^{2N}$
if and only if $\mb A \mb B^T$ is symmetric. In addition they span a vector
space of rank $N$ if and only if $(\mb A \, \mb B)$ has maximal rank $N$.
The result follows.
\end{proof}

\begin{lemma}
\lbl{lemma.BA}
If $(\mb A\;\mb B)$ is the upper half of a symplectic matrix and 
$\mb B$ is non-degenerate, then $\mb B^{-1}\mb A$ is symmetric.
\end{lemma}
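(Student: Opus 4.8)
The plan is to reduce this immediately to Lemma~\ref{lem.halfAB}. By that lemma, the hypothesis that $(\mb A\;\mb B)$ is the upper half of a symplectic matrix tells us precisely that $\mb A\mb B^T$ is symmetric (and that $(\mb A\;\mb B)$ has rank $N$, which we will not need here). So the entire content to be extracted is: symmetry of $\mb A\mb B^T$ together with invertibility of $\mb B$ implies symmetry of $\mb B^{-1}\mb A$.

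First I would write down $\mb A\mb B^T = \mb B\mb A^T$, which is just the statement that $\mb A\mb B^T$ equals its own transpose. Then, since $\mb B$ is non-degenerate, I multiply on the left by $\mb B^{-1}$ and on the right by $(\mb B^T)^{-1} = (\mb B^{-1})^T$, obtaining
\be
\mb B^{-1}\mb A\mb B^T(\mb B^{-1})^T = \mb A^T(\mb B^{-1})^T = (\mb B^{-1}\mb A)^T\,,
\ee
while the left-hand side simplifies as $\mb B^{-1}\mb A\,(\mb B^T(\mb B^{-1})^T) = \mb B^{-1}\mb A\,( \mb B^{-1}\mb B)^T = \mb B^{-1}\mb A$. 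Hence $\mb B^{-1}\mb A = (\mb B^{-1}\mb A)^T$, which is the desired symmetry. (Equivalently, one can phrase it as: $\mb B^{-1}\mb A$ symmetric $\iff$ $\mb A^T(\mb B^{-1})^T = \mb B^{-1}\mb A$ $\iff$ $\mb B\,\mb B^{-1}\mb A\,\mb B^T = \mb B\mb A^T$ $\iff$ $\mb A\mb B^T = \mb B\mb A^T$, the last of which holds by Lemma~\ref{lem.halfAB}.)

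There is essentially no obstacle here — the statement is a one-line linear-algebra manipulation once Lemma~\ref{lem.halfAB} is invoked. The only point requiring the slightest care is the bookkeeping with transposes and inverses, namely the identity $(\mb B^T)^{-1} = (\mb B^{-1})^T$ and the fact that $\mb B^{-1}$ exists by hypothesis; everything else is automatic.
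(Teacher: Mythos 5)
Your proof is correct and follows essentially the same route as the paper: invoke Lemma~\ref{lem.halfAB} to get $\mb A\mb B^T$ symmetric, then conjugate by $\mb B^{-1}$. The paper states this slightly more compactly (observing that $\mb B^{-1}(\mb A\mb B^T)(\mb B^{-1})^T$ is symmetric and equals $\mb B^{-1}\mb A$), but the computation is identical.
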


\begin{proof}
Lemma \ref{lem.halfAB} implies that $\mb A \mb B^T$ is symmetric, and 
so is $(\mb B)^{-1} \mb A \mb B^T  ((\mb B)^{-1})^T$.
\end{proof}  
  
It is not true in general that $\mb B$ is invertible. However, after a 
possible change of quad type, we can assume that $\mb B$ is invertible. 
This is the content of the next lemma.

\begin{lemma}
\lbl{lemma.B}
\rm{(a)} Suppose $(\mb A \; \mb B)$ is the upper half of a symplectic
$2N \times 2N$ matrix. If $\mb A$ has rank $r$, then any $r$ linearly
independent columns of $\mb A$
and their complementary $N-r$ columns in $\mb B$ form a basis for the column
space of $(\mb A \; \mb B)$.
\newline
\rm{(b)}
There always exists a choice of quad type for which $\mb B$ is 
non-degenerate (for any fixed choice of redundant edge and meridian path).
\end{lemma}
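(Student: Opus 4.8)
The claim has two parts: (a) is a fact of linear algebra about the upper half $(\mb A\;\mb B)$ of a symplectic matrix, and (b) will follow from (a) by a suitable choice of quad types. For (a), fix indices $I\subseteq\{1,\dots,N\}$ with $|I|=r$ such that the columns $a_i$ ($i\in I$) of $\mb A$ are linearly independent (hence a basis of the column space of $\mb A$), and let $J$ be the complement, $|J|=N-r$. I will show that the $N\times N$ matrix $M$ whose columns are the $a_i$ ($i\in I$) together with the columns $b_j$ ($j\in J$) of $\mb B$ is invertible; since $(\mb A\;\mb B)$ has full rank $N$ by Lemma \ref{lem.halfAB}, its column space is all of $\BQ^N$, so invertibility of $M$ is exactly the assertion that these $N$ columns of $(\mb A\;\mb B)$ are a basis of it.

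To prove $M$ is invertible I show its left kernel is trivial. Suppose $v^TM=0$, i.e. $v\cdot a_i=0$ for $i\in I$ and $v\cdot b_j=0$ for $j\in J$. Since $\{a_i:i\in I\}$ spans the column space of $\mb A$, the first family of equations already gives $v^T\mb A=0$. Feeding this into the symplectic symmetry $\mb A\mb B^T=\mb B\mb A^T$ (equation \eqref{NZsymp}), I get $0=v^T\mb A\mb B^T=v^T\mb B\mb A^T$, so $u:=\mb B^Tv$ satisfies $\mb Au=0$. But the equations $v\cdot b_j=0$ for $j\in J$ say precisely that the $J$-entries of $u$ vanish, so $u$ is supported on $I$; then $0=\mb Au=\sum_{i\in I}u_ia_i$ forces $u=0$ by linear independence of $\{a_i:i\in I\}$. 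Thus $\mb B^Tv=0$, and together with $v^T\mb A=0$ this gives $v^T(\mb A\;\mb B)=0$, hence $v=0$ since $(\mb A\;\mb B)$ has rank $N$. This proves (a).

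For (b), recall from Section \ref{sec.invgauge} that a cyclic change of quad type of the $i$-th tetrahedron replaces the $i$-th column of $\mb A$ by $-b_i$ and the $i$-th column of $\mb B$ by $a_i-b_i$; performing it twice replaces them by $b_i-a_i$ and $-a_i$, so in particular the $i$-th column of $\mb B$ can be brought to $-a_i$. With $I$ and $J$ as in (a), I apply this double permutation to each tetrahedron $i\in I$ and leave the tetrahedra in $J$ unchanged. The resulting Neumann-Zagier matrix $\mb B$ then has columns $-a_i$ ($i\in I$) and $b_j$ ($j\in J$); this differs from the basis $M$ of part (a) only by a sign in the $r$ columns indexed by $I$, so $\det\mb B=(-1)^r\det M\ne 0$. (The choice of redundant edge and meridian path is fixed throughout and plays no role.)

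The argument is short, and the only points needing attention are the column bookkeeping under a change of quad type --- verifying that one of the three quad types of a tetrahedron always realizes a $\mb B$-column equal to $\pm a_i$ --- and checking that the proof of (a) uses only the two consequences of symplecticity supplied by Lemma \ref{lem.halfAB}, namely symmetry of $\mb A\mb B^T$ and maximality of the rank of $(\mb A\;\mb B)$, and not any relation involving the lower blocks of the symplectic completion. I do not anticipate a genuine obstacle.
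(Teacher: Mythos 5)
Your proof is correct, and your argument for part (a) takes a genuinely different route from the paper's. The paper first observes that left-multiplying $(\mb A\;\mb B)$ by any nonsingular $U$ preserves both the symplectic property and the column structure, uses this to put $\mb A$ into the normal form $\left(\begin{smallmatrix}\mb I_{r\times r}&A_2\\0&0\end{smallmatrix}\right)$, and then reads off from the relation $B_3+B_4A_2^T=0$ (a consequence of $\mb A\mb B^T=\mb B\mb A^T$) together with the full row rank of $(\mb A\;\mb B)$ that the block $B_4$ must be invertible. You instead run a direct kernel argument: a vector $v$ annihilating the chosen $N$ columns is shown to satisfy $v^T\mb A=0$, the symmetry $\mb A\mb B^T=\mb B\mb A^T$ then turns this into $\mb A(\mb B^Tv)=0$ with $\mb B^Tv$ supported on $I$, and linear independence of $\{a_i\}_{i\in I}$ plus full rank of $(\mb A\;\mb B)$ finishes. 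This is a cleaner argument that avoids the normal-form reduction and makes it explicit that only the two hypotheses of Lemma \ref{lem.halfAB} are used; the paper's normal form has the minor benefit of displaying the invertible $N\times N$ block concretely. For part (b) your argument is essentially the paper's, with the small and correct refinement that you track the cyclic permutation used in Section \ref{sec.invgauge} (the inverse of the one the paper quotes in the proof) and apply it twice per tetrahedron in $I$ to land exactly on $\pm a_i$; both versions reduce (b) to (a) in the same way.
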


\begin{proof}
For (a) let $\rank(\mb A)=r\leq N$. Without loss of generality, 
we may suppose that the first $r$ columns of $\mb A$ are linearly independent. 
We want to show that, together with the last $N-r$ columns of $\mb B$, 
they form a matrix of rank $N$.

If we simultaneously multiply both $\mb A$ and $\mb B$ on the left by 
any nonsingular matrix $U\in \mathrm{GL}(N,\mathbb R)$, both the symplectic 
condition and the columns are preserved. This follows from the fact 
that 
$\left(\begin{smallmatrix} U & 0 \\ 0 & U^{-1,T} \end{smallmatrix}\right)\in 
\Sp(2N,\mathbb R)$. By allowing such a transformation, we may 
assume that  $\mb A$ takes the block form
\be 
\label{Ablock} 
\mb A = \begin{pmatrix} \mb I_{r\times r} & A_2 \\ 0 & 0 \end{pmatrix} 
\ee
for some $A_2$. Similarly, we split $\mb B$ into blocks of size $r$ and 
$N-r$,
\be 
\mb B = \begin{pmatrix} B_1 & B_2 \\ B_3 & B_4 \end{pmatrix}\,. 
\ee

Since $(\mb A\;\mb B)$ has full (row) rank, we see 
that the bottom $N-r$ rows of $\mb B$ must be linearly independent, \ie\ 
$\rank(B_3\;\;B_4)=N-r$. From the symplectic condition of Lemma 
\ref{lem.halfAB}, we also find that $B_3+B_4A_2^T = 0$, so that 
$\rank (B_3\;\; B_4) \leq \rank(B_4)$. This then implies that $B_4$ 
itself must have maximal rank $N-r$. Therefore, the last $N-r$ columns 
of $\mb B$ are linearly independent, and also independent of the first 
$N$ columns of $\mb A$; \ie\ the matrix $\left(\begin{smallmatrix} 
\mb I_{r\times r} & B_2 \\ 0 & B_4 \end{smallmatrix}\right)$ has maximal 
rank as desired. This concludes the proof of part (a).

For part (b) let us denote the columns of $\mb A$ and $\mb B$ as 
$a_i$ and $b_i$. A change of quad type corresponding to a cyclic
permutation $Z_i\mapsto Z_i'\mapsto Z_i'' \mapsto Z_i$ on 
the $i^{\rm th}$ tetrahedron permutes the $i^{\rm th}$ columns of 
$\mb A$ and $\mb B$ as $(a_i,b_i)\mapsto (b_i-a_i, -a_i)$. Therefore, 
given $N$ complementary columns of $(\mb A\;\mb B)$ that have full rank, 
we can use such permutations to move all the columns (up to a sign) into 
$\mb B$.
\end{proof}


\section{The shape parameters are rational functions on the 
character variety}
\lbl{app.shapes}

In this appendix, we prove that the shape parameters of a regular 
ideal triangulation are rational functions on $Y_M^{\rm geom}$, 
the geometric component of the $\SL_2(\BC)$ A-polynomial curve.

\begin{proposition}
\lbl{prop.shapes}
Fix a regular ideal triangulation $\CT$ of a one-cusped hyperbolic manifold
$M$. Then every shape parameter of $\CT$ is a rational function on 
$Y_M^{\rm geom}$.
\end{proposition}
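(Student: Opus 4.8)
The plan is to exhibit the shape parameters as the coordinate functions on an algebraic curve birational to $Y_M^{\rm geom}$, so that rationality is automatic. Let $V_\CT^{\rm geom}\subset(\C\setminus\{0,1\})^N$ be the irreducible component of the variety of solutions of the edge gluing equations of $\CT$ that contains the complete hyperbolic structure; by Neumann--Zagier it is a smooth complex affine curve near that point, and the shapes $z_1,\dots,z_N$ are by construction the restrictions to $V_\CT^{\rm geom}$ of the coordinate functions, so $\C(V_\CT^{\rm geom})=\C(z_1,\dots,z_N)$ (the functions $z_i',z_i''$ being rational in $z_i$). Two dominant morphisms of affine curves are available: the developing map $\mathrm{dev}\colon V_\CT^{\rm geom}\longto X_M^{\rm geom}$ of \eqref{eq.dev}, and the $\SL(2,\C)$--lifted eigenvalue map $\mathrm{eig}\colon X_M^{\rm geom}\longto Y_M^{\rm geom}$ obtained from \eqref{Amap} (with the square--root conventions of Section \ref{sec.uflat}, so that $m$ and $\ell$ themselves, not merely $m^2,\ell^2$, are the relevant functions). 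It suffices to prove that the composite $\mathrm{eig}\circ\mathrm{dev}\colon V_\CT^{\rm geom}\to Y_M^{\rm geom}$ is birational: then pullback of functions identifies $C(Y_M^{\rm geom})=\BQ(m)[\ell]/(A_M^{\rm geom})$ with $\C(z_1,\dots,z_N)$, so every $z_i$ is a rational function on $Y_M^{\rm geom}$, and, the whole construction being defined over $\BQ$, it lies in the stated field.

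The first factor, $\mathrm{dev}$, is a birational isomorphism onto $X_M^{\rm geom}$, which is classical. Dominance is exactly Lemma \ref{lem.wellknown}(b): since $\CT$ is $\rho_0$-regular it is $\rho$-regular, hence $\rho$ lies in the image of $\mathrm{dev}$, for all but finitely many $\rho\in X_M^{\rm geom}$. Generic injectivity follows because $\mathrm{dev}$ has a rational inverse: a representation on the geometric component determines its developing map up to $\PSL(2,\C)$, and the shape $z_i$ of $\Delta_i$ is then the cross--ratio of the four developed ideal vertices of $\Delta_i$, which are fixed points of peripheral elements of $\pi_1(M)$; cross--ratios of such fixed points are conjugation--invariant and depend rationally on $\rho$, hence define rational functions $z_i$ on $X_M^{\rm geom}$ inverting $\mathrm{dev}$. (Equivalently, near the complete structure the shapes are local holomorphic coordinates for the deformation space of $\rho_0$, which is one--dimensional as $M$ has a single cusp, so $\mathrm{dev}$ is a local biholomorphism at $\rho_0$.) Hence $\C(z_1,\dots,z_N)=\C(X_M^{\rm geom})$.

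It remains to show that the eigenvalue map $\mathrm{eig}\colon X_M^{\rm geom}\to Y_M^{\rm geom}$ is birational --- equivalently, that the functions $z_i\in\C(X_M^{\rm geom})$ descend along $\mathrm{eig}$ --- and this is the step I expect to be the real obstacle. The local input is Thurston's deformation theory: at the complete structure the logarithmic meridian holonomy $u=\log m$ is a local parameter on $X_M^{\rm geom}$, while the logarithmic longitude holonomy is a holomorphic function $v(u)$ with $v'(0)$ equal to the cusp modulus of $M$ (which has nonzero imaginary part); hence $\mathrm{eig}$ is a local biholomorphism at $\rho_0$. Passing to smooth projective models, the degree of $\mathrm{eig}$ equals the number of preimages, counted with multiplicity, of any point of the irreducible target; I would compute it at a point lying over the complete structure, where $\mathrm{eig}$ is unramified and where one must verify that the fibre is the single point $\rho_0$ --- this last verification, which amounts to controlling the branches of $Y_M^{\rm geom}$ through $\mathrm{eig}(\rho_0)$, is the delicate part. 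In the write--up its ingredients (smoothness of the deformation varieties at $\rho_0$, $v'(0)\neq0$, and the branch count) should be attributed to \cite{Th,NZ}; alternatively one may invoke the known birationality of the restriction map from the geometric component of the character variety of a one--cusped hyperbolic manifold onto the corresponding component of its A--polynomial curve.

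Note that hyperbolic geometry is genuinely needed here: the combinatorial gluing equations \eqref{uglue}--\eqref{uglueL} by themselves only show the $z_i$ are \emph{algebraic} on $Y_M^{\rm geom}$ (the composite $\mathrm{eig}\circ\mathrm{dev}$ is automatically a dominant map of curves), and upgrading ``algebraic'' to ``rational'' is precisely the degree--one assertion above. Once $\mathrm{eig}\circ\mathrm{dev}$ is known to be birational, the proposition follows, and the explicit rational expressions $z_i=z_i(\ell,m)$ are obtained by solving \eqref{uglue}--\eqref{uglueL}, exactly as in the figure--eight computation of Section \ref{sec.utor41}.
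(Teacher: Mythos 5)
Your high-level strategy is the same as the paper's: write the map from the gluing variety to $Y_M^{\rm geom}$ as a composition through a version of the character variety, show each piece is birational (degree one), and identify the shapes with coordinate functions pulled back along a birational chain. Your appeal to Dunfield's degree-one theorem for the eigenvalue map is exactly the citation the paper uses, and you correctly flag that global degree one for $\mathrm{eig}$ (not just local biholomorphism at $\rho_0$) is the non-trivial input.

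There is, however, a genuine gap in your treatment of the first factor. You argue that $\mathrm{dev}$ has a \emph{rational} inverse because the shape $z_i$ is the cross-ratio of the four developed ideal vertices, and these vertices ``are fixed points of peripheral elements of $\pi_1(M)$ \dots depend rationally on $\rho$.'' That last clause fails on $X_M^{\rm geom}$: away from the parabolic locus, a peripheral element $\rho(\gamma)\in\SL(2,\C)$ has two fixed points in $P^1(\C)$, given by $(a-d\pm\sqrt{(a+d)^2-4})/(2c)$, so the fixed point used as a vertex is an algebraic (two-valued) function of $\rho$, not a rational one. A priori this only makes the cross-ratio, and hence the shapes, algebraic functions on $X_M^{\rm geom}$ (and on $Y_M^{\rm geom}$), which is strictly weaker than the proposition. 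Resolving this is precisely the point of the paper's proof: it replaces $X_{M,\PSL(2,\BC)}$ with the \emph{augmented} character variety $\overline X_{M,\PSL(2,\BC)}$ of pairs $[(\rho,z)]$ with $z$ a fixed point of the peripheral subgroup, so that a single fixed point is recorded as part of the data and the vertices of every developed tetrahedron become genuinely rational in $[(\rho,z)]$. Dunfield's developing map $V_\CT\to\overline X_{M,\PSL(2,\BC)}$ is then degree one, the commutative square with $V_{M,\PSL(2,\BC)}\to X_{\pd M,\PSL(2,\BC)}$ (both vertical arrows degree two) promotes his Theorem 3.1 to degree one along the top row, and the chain $V_\CT\to\overline X_{M,\PSL(2,\BC)}\to V_{M,\PSL(2,\BC)}$ gives rationality. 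You should either route your cross-ratio argument through the augmented variety in the same way, or else prove directly that the branch of each fixed point selected by the developing map is single-valued on $X_M^{\rm geom}$ — but that direct verification is essentially equivalent to introducing $\overline X_{M,\PSL(2,\BC)}$ and should not be taken for granted.
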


\begin{proof}
The proof is a little technical, and follows from work of  
Dunfield \cite[Cor.3.2]{Dn}, partially presented in the appendix to 
\cite{BDR-V}. For completeness, we give the details of the proof here.
We thank N. Dunfield for a careful explanation of his proof to us.

Consider the affine variety $R(M,\SL(2,\BC))=\Hom(\pi_1,\SL(2,\BC))$
and its algebrogeometric quotient $X_{M,\PSL(2,\BC)}$ 
by the conjugation action of $\PSL(2,\BC)$. Following Dunfield from the
Appendix to \cite{BDR-V}, let $\overline{R}(M,\SL(2,\BC))$ denote
the subvariety of $R(M,\SL(2,\BC)) \times P^1(\BC)$ consisting of pairs 
$(\rho,z)$ where $z$ is a fixed point of $\rho(\pi_1(\pt M))$. Let 
$\overline{X}_{M,\SL(2,\BC)}$ denote the algebro-geometric quotient of 
$\overline{R}(M,\SL(2,\BC))$ under the diagonal action of $\SL(2,\BC)$
by conjugation and M\"obius transformations respectively. 
We will call elements $(\rho,z) \in \overline{R}(M,\SL(2,\BC))$ 
{\em augmented representations}. Their images in the augmented character 
variety $\overline{X}(M,\SL(2,\BC))$  will be called 
{\em augmented characters} and will be denoted by square
brackets $[(\rho,z)]$. Likewise, replacing $\SL(2,\BC)$ by
$\PSL(2,\BC)$, we can define the character variety $X_{M,\PSL(2,C)}$ and
its augmented version $\overline{X}_{M,\PSL(2,\BC)}$.

The advantage of the augmented character variety $\overline{X}_{M,\SL(2,\BC)}$
is that given $\ga \in \pi_1(\pt M)$ there is a regular function $e_{\ga}$ 
that sends $[(\rho,z)]$ to the eigenvalue of $\rho(\ga)$ corresponding to 
$z$, using Lemma \ref{lem.eigen} below.
In contrast, in $X_{M,\SL(2,\BC)}$ only the trace $e_{\ga}+e_{\ga}^{-1}$
of $\rho(\ga)$ is well-defined. 
Likewise, in $\overline{X}_{M,\PSL(2,C)}$ (resp. $X_{M,\SL(2,\BC)}$) only 
$e_{\ga}^2$ (resp. $e_{\ga}^2+e_{\ga}^{-2}$) is well-defined.

From now on, we will restrict to the geometric component of the character
variety $X_{M,\PSL(2,\BC)}$ and we will fix a regular ideal triangulation
$\CT$. In \cite[Thm.3.1]{Dn} Dunfield proves that the natural restriction
map
$$
X_{M,\PSL(2,\BC)} \longto X_{\pt M,\PSL(2,\BC)}
$$
of affine curves is of degree $1$. 
$X_{\pt M,\PSL(2,\BC)}$ is an affine curve in $(\BC^*)^2/\BZ_2$
and let $V_{M,\PSL(2,\BC)} \subset (\BC^*)^2$ denote the preimage of 
$X_{\pt M,\PSL(2,\BC)}$ of the 2:1 map $(\BC^*)^2 \longto (\BC^*)^2/\BZ_2$. 
The commutative diagram
\begin{equation}
\label{eqn:diagram}
\cxymatrix{{{\overline{X}_{M,\PSL(2,\BC)} }\ar[r]\ar[d]
&V_{M,\PSL(2,\BC)}
\ar[d]
\\
{X_{M,\PSL(2,\BC)}}\ar[r]
&{X_{\pt M,\PSL(2,\BC)}}}}
\end{equation}
has both vertical maps of degree 2, and the bottom horizontal map of
degree 1. Thus, it follows that the top horizontal map is of degree 1.
In \cite[Sec.10.3]{BDR-V} Dunfield constructs a degree 1 developing map
$$
V_{\CT} \longto \overline{X}_{M,\PSL(2,\BC)} \,,
$$
which combined with the previous discussion gives a chain of birational
curve isomorphisms
\be
\lbl{eq.3curves}
V_{\CT} \longto \overline{X}_{M,\PSL(2,\BC)} \longto V_{M,\PSL(2,\BC)}
\ee
Since the shape parameters are rational (in fact coordinate) functions on 
$V_{\CT}$, it follows that they are rational functions on $V_{M,\PSL(2,\BC)}$.
Using the regular map $V_{M,\SL(2,\BC)} \longto V_{M,\PSL(2,\BC)}$, we obtain
that the shape parameters are rational functions on $V_{M,\SL(2,\BC)}$.
\end{proof}

Proposition \ref{prop.shapes} has the following concrete corollary.

\begin{corollary}
\lbl{cor.zML}
Given a regular ideal triangulation $\CT$ with $N$ tetrahedra, there is
a solution of the shape parameters in $\BQ(m,\ell)/(A(m,\ell))$.
\end{corollary}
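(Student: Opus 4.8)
The plan is to deduce the corollary directly from Proposition~\ref{prop.shapes} by making the function field of the geometric component $Y_M^{\rm geom}$ completely explicit. First I would recall that $Y_M^{\rm geom}\subset(\BC^*)^2$ is the image of the gluing variety $V_\CT$ (more precisely of $V_{M,\SL(2,\BC)}$) under the eigenvalue map with coordinates $(m,\ell)$, and that all the maps entering the chain \eqref{eq.3curves} --- the developing map, the restriction to the boundary torus, and the passage to eigenvalues --- together with the deformed gluing equations \eqref{uglue}--\eqref{uglueL}, are defined over $\BQ$. Consequently $Y_M^{\rm geom}$ is a curve defined over $\BQ$, and Proposition~\ref{prop.shapes} in fact exhibits each shape $z_j$ of $\CT$ as a $\BQ$-rational function on it.

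Next I would identify this function field. Since $A_M(\ell,m)\in\BZ[\ell,m]$ vanishes on $Y_M^{\rm geom}$, and $Y_M^{\rm geom}$ is geometrically irreducible (being the image of the irreducible curve $V_\CT$), it is cut out in $(\BC^*)^2$ by a single irreducible factor $A_M^{\rm geom}(\ell,m)$ of $A_M$, which lies in $\BQ[\ell,m]$ (one may take it primitive over $\BZ$). On the geometric component both eigenvalues deform: $m$ is a local coordinate near the complete structure by Thurston's hyperbolic Dehn-surgery theorem, and $\ell$ is likewise non-constant, so $A_M^{\rm geom}$ has positive degree in each of $\ell$ and $m$; by Gauss's lemma it remains irreducible over $\BQ(m)$. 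Therefore the function field of $Y_M^{\rm geom}$ is
\[
\BQ(m)[\ell]/\big(A_M^{\rm geom}(\ell,m)\big),
\]
and each $z_j$, being an element of this field by Proposition~\ref{prop.shapes}, is represented by a polynomial in $\ell$ with coefficients in $\BQ(m)$, taken modulo $A_M^{\rm geom}$. Because $V_\CT\dashrightarrow Y_M^{\rm geom}$ is birational, the resulting $N$-tuple $(z_1,\dots,z_N)$ is genuinely a solution of the gluing equations \eqref{uglue}, holding identically on the curve; taking $A=A_M^{\rm geom}$ (a factor of the $A$-polynomial $A_M$) gives the solution in $\BQ(m,\ell)/(A(m,\ell))$ asserted in the statement.

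Given Proposition~\ref{prop.shapes}, the corollary is essentially a matter of bookkeeping, so I do not expect a genuine obstacle; the two points I would be most careful about are (i) verifying that the geometric factor $A_M^{\rm geom}$ is actually defined over $\BQ$ --- which is what allows writing the function field as a simple extension of $\BQ(m)$, and which is forced because the gluing data, the developing map, and the elimination producing $A_M^{\rm geom}$ are all $\BQ$-rational, so no Galois automorphism can move $Y_M^{\rm geom}$ --- and (ii) checking that $m$ and $\ell$ are both non-constant on $Y_M^{\rm geom}$, so that the displayed presentation is valid. A last small point worth recording is the compatibility of the $(m,\ell)$-coordinates across \eqref{eq.3curves}: the eigenvalues are canonically defined only on the augmented / $V$-level curves, which is exactly the level at which Proposition~\ref{prop.shapes} places the shape parameters, so the identification is consistent.
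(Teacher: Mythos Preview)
Your proposal is correct and follows exactly the route the paper intends: the paper states this corollary without proof, presenting it as an immediate ``concrete'' consequence of Proposition~\ref{prop.shapes} together with the identification $C(Y_M^{\rm geom})=\BQ(m)[\ell]/\big(A_M^{\rm geom}(\ell,m)\big)$ already recorded in Section~\ref{sub.GE}. Your argument simply fills in the details the paper leaves implicit (rationality of the defining data over $\BQ$, irreducibility of $A_M^{\rm geom}$ over $\BQ(m)$ via Gauss's lemma, and non-constancy of $m$ and $\ell$), so there is no substantive difference in approach.
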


\begin{lemma}
\lbl{lem.eigen}
Suppose $A=\left(\begin{matrix}a & b \\ c & d \end{matrix}\right) \in 
\SL(2,\BC)$ and $c \neq 0$. Then, $\lambda$ is an eigenvalue of $A$ if and 
only if $z=(\lambda-2d)/(2c)$ is a fixed point of the corresponding M\"obius
transformation in $P^1(\BC)$.
\end{lemma}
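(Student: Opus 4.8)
The plan is to prove the equivalence directly, by routing through the eigenvector of $A$. Write $f_A(z)=\frac{az+b}{cz+d}$ for the M\"obius transformation attached to $A$. Since $c\neq 0$, $f_A(\infty)=a/c$ is finite, so $\infty$ is not a fixed point; hence every fixed point lies in $\BC$ and is a root of the quadratic $g(z):=z(cz+d)-(az+b)=cz^2+(d-a)z-b$ obtained from $f_A(z)=z$, and $g$ genuinely has degree $2$ because its leading coefficient is $c\neq 0$. The crux is an elementary identity: for finite $z$, $g(z)=0$ is precisely the relation $az+b=(cz+d)z$, and hence $A(z,1)^{T}=(az+b,\,cz+d)^{T}=(cz+d)(z,1)^{T}$. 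Thus $z\in P^1(\BC)$ is a fixed point of $f_A$ if and only if $v=(z,1)^{T}\neq 0$ is an eigenvector of $A$ with eigenvalue $cz+d$; equivalently, $\lambda$ is an eigenvalue of $A$ if and only if $z=(\lambda-d)/c$ is a fixed point. (The formula in the statement, $z=(\lambda-2d)/(2c)$, should be read in the corrected form $z=(\lambda-d)/c$, i.e.\ $\lambda=cz+d$: as literally printed it is compatible with the fixed-point equation only when $\mathrm{tr}\,A=a+d=0$, so the factor of $2$ is a transcription slip that I would correct, stating the lemma as $\lambda=cz+d$.)

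I would then fill in the two directions as follows. ($\Leftarrow$) Given a fixed point $z$, set $\lambda:=cz+d$; the identity above gives $Av=\lambda v$ with $v=(z,1)^{T}\neq 0$, so $\lambda$ is an eigenvalue of $A$. ($\Rightarrow$) Given an eigenvalue $\lambda$, so that $\lambda^{2}-(a+d)\lambda+1=0$ using $\det A=1$, substitute $z=(\lambda-d)/c$ to compute $c\,g(z)=(\lambda-d)^{2}+(d-a)(\lambda-d)-bc=(\lambda-d)(\lambda-a)-bc=\lambda^{2}-(a+d)\lambda+(ad-bc)=\lambda^{2}-(a+d)\lambda+1=0$, whence $g(z)=0$ and $z$ is a fixed point. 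Since $A$ has at most two eigenvalues and $f_A$ at most two fixed points, $\lambda\mapsto(\lambda-d)/c$ is the asserted bijection, and the parabolic case (a double eigenvalue, a single fixed point of multiplicity two) is covered by the same computation.

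I do not expect a real obstacle here: the content is elementary linear algebra together with the standard dictionary between $2\times2$ matrices and M\"obius transformations. The two points that need care are (i) the normalization — replacing the printed $(\lambda-2d)/(2c)$ by $(\lambda-d)/c$, which is forced by $\det A=1$ and is the form actually used downstream (in the definition of the eigenvalue function $e_\gamma$ on the augmented character variety) — and (ii) making explicit why $c\neq 0$ is assumed, namely it is exactly what keeps both fixed points finite (so the formula is defined and $\infty$ is excluded) and keeps $g$ quadratic.
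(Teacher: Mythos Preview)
Your argument is correct and complete. The paper itself does not supply a proof of this lemma; it is simply stated, so there is nothing to compare against at the level of method. Your direct computation via the eigenvector $(z,1)^T$ and the relation $\lambda=cz+d$ is the standard elementary argument and is exactly what is needed.

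You are also right to flag the typo: as written, $z=(\lambda-2d)/(2c)$ is incorrect. The characteristic equation $\lambda^2-(a+d)\lambda+1=0$ together with the fixed-point equation $cz^2+(d-a)z-b=0$ forces $\lambda=cz+d$, i.e.\ $z=(\lambda-d)/c$, just as you computed. (One can see the printed formula fails already for a generic upper-triangularizable example with $c\neq 0$.) Your explanation of the role of the hypothesis $c\neq 0$ --- keeping $\infty$ from being a fixed point and keeping $g$ genuinely quadratic --- is also the right one. No further work is required.
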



\section{Deriving the state integral}
\lbl{app.SIM}

In this appendix, we explain the connection between the quantization 
formalism of \cite{D1} and the special state integrals \eqref{SIM} and 
\eqref{uSIM} that led to all the formulas in the present paper. We will 
first review classical ``symplectic gluing'' of tetrahedra, then extend 
gluing to the quantum setting and construct the state integral. There are 
multiple points in the construction that have yet to be made mathematically 
rigorous, which we will try to indicate.

\subsection{Symplectic gluing}
\label{sec.sympglue}

The main idea of \cite{D1} is that gluing of tetrahedra should be viewed, both classically and quantum mechanically, as a process of symplectic reduction.

Suppose we have a one-cusped manifold $M$ with a triangulation $\CT=\{\Delta_i\}_{i=1}^N$. Classically, each tetrahedron $\Delta_i$ comes with a phase space
\begin{align} \CP_{\pd\Delta_i} &\;=\;\{\text{flat $\SL(2,\C)$ connections on $\pd\Delta_i$}\} \notag\\ &\;\approx\; \{(Z_i,Z_i',Z_i'')\in \C\backslash(2\pi i\Z)\,|\,Z_i+Z_i'+Z_i''=i\pi\}\,, \end{align}
with (holomorphic) symplectic structure
\be \omega_{\pd \Delta_i} = dZ\wedge dZ''\,,\ee
and a Lagrangian submanifold%
\footnote{Explicitly, $\CP_{\pd\Delta_i}$ is a space of flat connections on a 4-punctures sphere with parabolic holonomy at the four punctures; while $\CL_{\Delta_i}$ is the subspace with trivial holonomy --- hence connections that extend into the bulk of the tetrahedron. See, \eg, Section 2 of \cite{DGG1}.}
\begin{align} \label{Lagtet} \CL_{\Delta_i} &\;=\; \{\text{flat $\SL(2,\C)$ connections that extend to $\Delta_i$}\} \notag\\ &\;=\; \{e^{Z''}+e^{-Z}-1=0\}\;\subset \CP_{\pd\Delta_i} \end{align}

When gluing the tetrahedra together, we first form a product
\be \CL_{\times} = \CL_{\Delta_1}\times\cdots\CL_{\Delta_N}\quad\subset \quad \CP_{\times} = \CP_{\pd \Delta_1}\times\cdots\times \CP_{\pd \Delta_N}\,.\ee
The edge constraints $X_I:= \sum_{i=1}^N \big(\mb{G}_{Ii}Z_i+\mb{G}_{Ii}'Z_i' +\mb{G}_{Ii}''Z_i''\big)-2\pi i$ from \eqref{glue} are functions on the product phase space $\CP_\times$, and can be used as (holomorphic) moment maps to generate $N-1$ independent translation actions $t_I$. Recall \cite{NZ} that the logarithmic meridian and longitude holonomies $(u,v)$ are also functions on $\CP_\times$, which Poisson-commute with all the edges $X_I$, and so are fixed under these translations. Then the phase space of $M$ is a symplectic quotient,
\begin{align} \CP_{\pd M} &\;=\; \{\text{flat $\SL(2,\C)$ connections on $\pd M\simeq T^2$}\} \;\approx\; \{(u,v)\in \C\} \notag \\
 &\;=\; \CP_\times \big/\!\!\big/(t_I)\,,
\end{align}
and the A-polynomial of $M$ (more properly, components of the A-polynomial for which the triangulation is regular) is the result of pulling the Lagrangian $\CL_\times$ through the quotient,
\be \CL_M = \text{``}\CL_\times\big/\!\!\big/(t_I)\text{''} \quad \approx\; \{A_M(e^v,e^u)=0\}\;\subset\; \CP_{\pd M}\,.\ee
This is quite easy to check using equations \eqref{uglue} and \eqref{uglueL}.

\subsection{Quantization}

Quantum mechanically, each tetrahedron has a Hilbert space $\CH_{\Delta_i}$, a wavefunction $\CZ_{\Delta_i}(Z_i)$, and a quantum operator $\hat\CL_{\pd\Delta_i}$ that annihilates the wavefunction. The symplectic-gluing procedure extends to the quantum setting, with appropriate quantum generalizations of all the above operations. Roughly, one forms a product wavefunction
\be \label{prodH}
\CZ_\times(Z_1,...,Z_N)=\CZ_{\Delta_1}\otimes \cdots \otimes \CZ_{\Delta_N}\;\;\in\;\; \CH_\times = \CH_{\pd\Delta_1}\otimes\cdots \otimes\CH_{\pd\Delta_N}\,,
\ee
and restricts the product Hilbert space using $N-1$ new polarizations coming from the edge constraints. The resulting restricted wavefunction is $\CZ_M(u)$, and it is annihilated by a quantized version of the A-polynomial \cite{Gu,Ga-Ahat}.

To make this more precise, let $M$ again be an oriented one-cusped manifold, and choose a triangulation $\CT =\{\Delta_i\}_{i=1}^N$ (regular with respect to some desired family of representations), a quad type, a redundant edge, and a meridian path --- just as in Section \ref{sec.triang}.

To each tetrahedron $\Delta_i$ we associate a boundary Hilbert space $\CH_{\pd \Delta_i}$. It is some extension%
\footnote{This space has not been mathematically defined yet; constructions of (\eg) \cite{AK} might prove useful achieving this.} %
of $L^2(\BR)$ that includes the wavefunction
\be \CZ_{\Delta_i}(Z_i;\hbar):= \psi_\hbar(Z_i)\,,\ee
where $\psi_\hbar(Z_i)$ is Faddeev's quantum dilogarithm \eqref{qdl} \cite{Fa}. We also associate to $\Delta_i$ an algebra of operators
\be \label{algi} \hat{\mathcal{A}}_{\pd \Delta_i} = \C\la \hat Z_i,\hat Z_i',\hat Z_i''\ra/(\hat Z_i+\hat Z_i'+\hat Z_i''=i\pi+\tfrac\hbar2)\,,\ee
with commutation relations
\be \label{Zcomm} [\hat Z_i,\hat Z_i']=[\hat Z_i',\hat Z_i'']=[\hat Z_i'',\hat Z_i]=\hbar\,.\ee
Then the quantization of the Lagrangian \eqref{Lagtet} annihilates the wavefunction,
\be \hat \CL_{\Delta_i}\;:=\;e^{\hat Z_i''}+e^{-\hat Z_i}-1\,,\qquad \hat \CL_{\Delta_i}\CZ_{\Delta_i}=0\,, \ee
where the operators act in the representation
\be \hat Z_i=Z_i\,,\quad \hat Z_i''=\hbar \pd_{Z_i}\,;\quad\text{or}\quad
 e^{\hat Z_i}\CZ(Z_i)=e^{Z_i}\CZ(Z_i)\,,\quad e^{\hat Z_i''}\CZ(Z_i) = \CZ(Z_i+\hbar)\,.\ee

In order to glue the tetrahedra together, we start by forming the product wavefunction $\CZ_\times(Z_1,...,Z_N) = \CZ_{\Delta_1}(Z_1)\cdots\CZ_{\Delta_N}(Z_N)$. This is an element of a product Hilbert space \eqref{prodH}. Acting on this product Hilbert space is the product $\hat{\mathcal{A}}_\times$ of algebras \eqref{algi}, which is simply generated by all the $\hat Z_i,\hat Z_i',\hat Z_i''$, with canonical commutation relations \eqref{Zcomm} (and operators from distinct tetrahedra always commuting).

Now, following the notation of Sections \ref{sec.glue} and \ref{sec.uflat}, we can define $N$ operators $\hat X_I \in \hat{\mathcal{A}}_\times$, one for each independent edge, and one for the meridian:
\be \hat X_I :=  \begin{cases} \sum_{i=1}^N \big(\mb{G}_{Ii}\hat Z_i+\mb{G}_{Ii}'\hat Z_i'+\mb{G}_{Ii}''\hat Z_i''\big) - 2\pi i-\hbar &\;\; I=1,...,N-1\,, \\[.2cm]
 \mb{G}_{N+1,i}\hat Z_i+\mb{G}_{N+1,i}'\hat Z_i'+\mb{G}_{N+1,i}''\hat Z_i'' &\;\; I=N\,. \end{cases}
\ee
Similarly, we may define an operator
\be \hat P_N :=  \tfrac12\big(\mb{G}_{N+2,i}\hat Z_i+\mb{G}_{N+2,i}'\hat Z_i'+\mb{G}_{N+2,i}''\hat Z_i''\big)
\ee
corresponding to the longitude. Due to the symplectic structure found in \cite{NZ}, we know that we may complete the set $\{\hat X_1,...,\hat X_N,\hat P_N\}$ to a full canonical basis of the algebra $\hat{\mathcal{A}}_\times$. We do this by adding $N-1$ additional operators $\hat P_I$, which are linear combinations of the $\hat Z$'s, such that
\be \qquad  [\hat P_I,\hat X_j] = \delta_{Ij}\hbar\,,\qquad [\hat P_I,\hat P_j]=[\hat X_I,\hat X_j]=0\,,\qquad 1\leq I,j\leq N\,.\ee

The operators $\hat X_I,\hat P_I$ have a simple interpretation in terms of a generalized Neumann-Zagier datum. Namely, if we complete $(\mb A\;\mb B)$ and the rows $C,D$ (of Section \ref{sec.uflat}) to a full symplectic matrix $\left(\begin{smallmatrix} \mb A & \mb B \\ \mb C & \mb D \end{smallmatrix}\right)$, then
\be \label{XPsymp}
 \begin{pmatrix} \hat X \\ \hat P \end{pmatrix} = \begin{pmatrix} \mb A & \mb B \\ \mb C & \mb D \end{pmatrix}\begin{pmatrix} \hat Z \\ \hat Z''\end{pmatrix} - \big(i\pi+\tfrac\hbar2\big)\begin{pmatrix} \eta \\ \eta_P \end{pmatrix}\,.
\ee
Here $\eta$ is precisely the vector of $N$ integers introduced in \eqref{defn}; while $\eta_P = (*,...,*,\eta_\lambda)$, with $\eta_\lambda$ from \eqref{defCD}. The first $N-1$ entries of $\eta_P$ depend on the precise completion of the canonical basis (or the symplectic matrix), and ultimately drop out of the gluing construction.

\subsection{Quantum reduction}

Classically, in order to glue we would want to set the $N-1$ edge constraints $X_I\to 0$, and the meridian $X_N\to 2u$. In Section \ref{sec.sympglue}, these functions were actually used as moment maps to perform a symplectic reduction. Now we should do the same thing quantum mechanically.
In order to reduce the product wavefunction $\CZ_\times(Z_1,...,Z_N)$ of \eqref{prodH} to the final wavefunction $\CZ_M(u)$ of the glued manifold $M$, we must transform the wavefunction to a representation (or ``polarization'') in which the operators $\hat X_I$ act diagonally (by multiplication). In this representation, the wavefunction depends explicitly on the $X_I$. The ``reduction'' then simply requires fixing $X_I\to(0,...,0,2u)$. Schematically,
\be \label{redscheme}
\CZ_\times(Z_1,...,Z_N)\overset{\text{transform}}{\longmapsto} \wt\CZ_\times(X_1,...,X_N) \;\;\overset{\text{fix}}{\mapsto}\;\; \CZ_M(u) = \wt\CZ_\times(0,...,0,2u)\,.
\ee

The transformation from $\CZ_\times$ to $\wt \CZ_\times$ is accomplished --- formally --- with the Weil representation $\mathcal{R}$ of the affine symplectic group \cite{Shale, Weil}. In particular, we need $\mathcal{R}(\alpha)$ for the affine symplectic transformation $\alpha$ in \eqref{XPsymp}. In Section 6 of \cite{D1}, it was discussed in detail how to find $\mathcal{R}(\alpha)$ by factoring the matrix of \eqref{XPsymp} into generators. Then, for example, an ``$S$--type'' element of the symplectic group acts via Fourier transform
\be \label{WeilS}
 \mathcal{R}\left(\left(\begin{smallmatrix} 0 & -I \\ I & 0\end{smallmatrix}\right)\right):\; f(Z)\;\mapsto\;\tilde f(W)=\int\frac{d^N\!Z}{(2\pi i\hbar)^{N/2}}\, e^{\tfrac1\hbar Z\cdot W}f(Z)\,,
\ee
whereas a ``$T$--type'' element acts as multiplication by a quadratic exponential
\be \label{WeilT}
\mathcal{R}\left(\left(\begin{smallmatrix} I & 0 \\ \mb T & I\end{smallmatrix}\right)\right):\; f(Z)\;\mapsto\;\tilde f(W)=e^{\frac1{2\hbar}W^T\mb TW}f(W)\,.
\ee
Affine shifts act either by translation or multiplication by a linear exponential.

In the present case, there is a convenient trick that allows us to find $\mathcal{R}(\alpha)$ without decomposing $\alpha$ into generators. We assume that the block $\mb B$ of the symplectic matrix is nondegenerate, since we know we can always choose a quad type with this property. For the moment, let us also suppose that the affine shifts vanish, $\eta=\eta_P=0$. Then the Weil action is
\be \label{simpleRa}
\mathcal{R}(\alpha)\,:\; \CZ_\times(Z)\;\mapsto\; \wt\CZ_\times(X) = \frac{1}{\sqrt{\det\mb B}}\int\frac{d^N\!Z}{(2\pi i\hbar)^{N/2}}\,e^{\tfrac1{2\hbar}\left( X\cdot \mb D\mb B^{-1}X-2Z\cdot\mb B^{-1}X+Z\cdot\mb B^{-1}\mb AZ\right)}\CZ_\times(Z)\,.
\ee
In particular, it can easily be verified that this correctly intertwines an action of operators $(\hat Z_i=Z_i,\,\hat Z_i''=\hbar\pd_{Z_i})$ on $\CZ_\times(Z)$ with an action of operators $(\hat X_I=X_I,\,\hat P_I=\hbar \pd_{X_I})$ on $\wt\CZ_\times(X)$. For example,
\begin{align*} & \int d^N\!Z\,e^{\tfrac1{2\hbar}\left( X\cdot \mb D\mb B^{-1}X-2Z\cdot\mb B^{-1}X+Z\cdot\mb B^{-1}\mb AZ\right)}\big(\mb A \hat Z+\mb B\hat Z''\big)\CZ_\times(Z) \\
&\qquad = \int d^N\!Z\,e^{\tfrac1{2\hbar}\left( X\cdot \mb D\mb B^{-1}X-2Z\cdot\mb B^{-1}X+Z\cdot\mb B^{-1}\mb AZ\right)}\big(\mb A Z+\hbar\, \mb B\pd_Z\big)\CZ_\times(Z) \\
&\qquad = \int d^N\!Z \Big[\big(\mb A Z-\hbar\, \mb B\pd_Z\big)e^{\tfrac1{2\hbar}\left( X\cdot \mb D\mb B^{-1}X-2Z\cdot\mb B^{-1}X+Z\cdot\mb B^{-1}\mb AZ\right)}\Big]\CZ_\times(Z) \\
&\qquad = \int d^N\!Z\,Xe^{\tfrac1{2\hbar}\left( X\cdot \mb D\mb B^{-1}X-2Z\cdot\mb B^{-1}X+Z\cdot\mb B^{-1}\mb AZ\right)}\CZ_\times(Z) \\
&\qquad = \hat X\,\wt \CZ_\times(X)\,.
\end{align*}

Non-zero affine shifts $\eta$ and $\eta_P$ further modify the result to
\begin{align*} \wt\CZ_\times(X) = &\frac{1}{\sqrt{\det\mb B}}\int\frac{d^N\!Z}{(2\pi i\hbar)^{N/2}}\,
\exp\bigg[-\frac1\hbar X\cdot \Big(i\pi+\tfrac\hbar 2\Big)\eta_P\,+\\
&\qquad \frac1{2\hbar}\bigg( \Big(X+\Big(i\pi+\tfrac\hbar 2\Big)\eta\Big)\cdot \mb D\mb B^{-1}\Big(X+\Big(i\pi+\tfrac\hbar 2\Big)\eta\Big)-2Z\cdot\mb B^{-1}\Big(X+\Big(i\pi+\tfrac\hbar 2\Big)\eta\Big)\\
&\qquad\qquad +Z\cdot\mb B^{-1}\mb AZ\bigg)\bigg]
\CZ_\times(Z)\,,
\end{align*}
and then, after setting $X\to 2\bm u = (0,...,0,2u)$ as in \eqref{redscheme}, we find
\begin{align} \CZ_M(u) = \frac{1}{\sqrt{\det\mb B}}&\int\frac{d^N\!Z}{(2\pi i\hbar)^{N/2}}\,
\exp\bigg[-\frac1\hbar (2\pi i+\hbar)\eta_\lambda u \,+ \notag \\
& \frac1{2\hbar}\bigg( \Big(2\bm{u}+\Big(i\pi+\tfrac\hbar 2\Big)\eta\Big)\cdot \mb D\mb B^{-1}\Big(2\bm{u}+\Big(i\pi+\tfrac\hbar 2\Big)\eta\Big)-2Z\cdot\mb B^{-1}\Big(2\bm{u}+\Big(i\pi+\tfrac\hbar 2\Big)\eta\Big)
\notag \\
&\qquad +Z\cdot\mb B^{-1}\mb AZ\bigg)\bigg]
\prod_{i=1}^N\psi_\hbar(Z_i)\,. \label{ZMu} 
\end{align}
This is the partition function of the one-cusped manifold $M$, modulo a multiplicative ambiguity of the form $\exp\big[\tfrac{\pi^2}6a+\tfrac{i\pi}{4}b+\tfrac{1}{24}c\big]$ for $a,b,c\in \Z$, which we will say more about in Section \ref{app.norm}. By \emph{construction}, this partition function is annihilated by the quantum $\hat A$-polynomial of $M$.

\subsection{Introducing a flattening}

In order to obtain the state integral \eqref{uSIM} appearing in the paper, we can introduce a generalized flattening (as in Section \ref{sec.uflat}) and use it to simplify \eqref{ZMu}. Note that the discrete-faithful state integral \eqref{SIM} follows immediately from \eqref{uSIM} upon setting $\bm u=(0,...,0,u)\to0$.

Suppose, then, that we have integers $(f,f'')$ that satisfy
\be 
\lbl{eq.nuPinteger}
\begin{pmatrix} \mb A & \mb B \\ \mb C & \mb D \end{pmatrix}\begin{pmatrix} f \\ f'' \end{pmatrix} = \begin{pmatrix} \eta \\ \eta_P \end{pmatrix}\,, \ee 
for some $\eta_P$ whose last entry is $\eta_\lambda$. 
We will \emph{assume} that a completed symplectic matrix 
$\left(\begin{smallmatrix} \mb A & \mb B 
\\ \mb C & \mb D \end{smallmatrix}\right)$ can be chosen in $\Sp(2N,\Z)$ 
rather than in $\Sp(2N,\BQ)$. In that case, since $f$ and $f''$ are vectors
with integer entries, it follows that $\eta_P \in \BZ^N$. 
%
%
Then, 
\begin{align*} -\eta_\lambda u +\eta\cdot \mb D\mb B^{-1} \bm u  &=
 -(\mb Cf+\mb D f'')\cdot \bm u+(\mb D^T\mb Af+\mb D^T \mb B f'')\cdot \mb B^{-1}\bm u 
 = f\cdot\mb B^{-1}\bm u\,,
\end{align*}
where we used the symplectic identities $\mb D^T\mb B=\mb B^T\mb D$ and $\mb D^T\mb A=I+\mb B^T\mb C$; and
\begin{align*} \mb \eta \cdot \mb D\mb B^{-1}\eta &= \eta\cdot \mb D(f''+\mb B^{-1}\mb Af) = \eta\cdot (\mb Df''+\mb Cf+\mb B^{-1,T}f) = f\cdot \mb B^{-1}\eta +\nu\cdot \nu_P \\
&= f\cdot \mb B^{-1}\eta\qquad (\text{mod}\;\Z) \end{align*}
in a similar way. These relations allow us to write the state integral 
\eqref{ZMu} as
\be \label{appSIM}
\begin{split}
\hspace{-0in}\CZ_M(u) &= \frac{1}{\sqrt{\det\mb B}} \,\, \cdot \\
 & \int \frac{d^N\!Z}{(2\pi \hbar)^{N/2}}\, e^{\tfrac{1}{\hbar}\Big[2\bm u\cdot \mb D\mb B^{-1}\bm u+ (2\pi i+\hbar)f\cdot \mb B^{-1}\bm u+\tfrac12\big(i\pi+\tfrac\hbar2\big)^2f\cdot\mb B^{-1}\eta-Z\mb B^{-1}\big(2\bm u+\big(i\pi+\tfrac\hbar 2\big)\eta\big)\Big]} 
\prod_{i=1}^N\psi_\hbar(Z_i)\,,
\end{split}
\ee
just as in \eqref{uSIM}. (We drop a factor of $\sqrt i$ from the measure, since it can be absorbed in the overall normalization ambiguity.)

\subsection{Normalization and invariance}
\label{app.norm}

The normalization of Chern-Simons state integrals has always been a subtle issue. For the integral of \cite{D1}, ambiguities in the normalization come from two sources: the projectivity of the Weil representation, and the incomplete invariance of the integral (even formally) under a change of ``quad type'' and a 2--3 move.

Let us consider the Weil representation first. We will assume that all symplectic matrices are in $\Sp(2N,\Z)$, and that all shifts involve integers (like $\eta$ and $\eta_P$) times $i\pi+\tfrac\hbar 2$. This assumption (which, again, is only an observed property) allows us to improve on the estimates of \cite{D1} (\cf\ Eqn.(6.6) there).  The Weil representation becomes a projective unitary representation of $\ISp(2N,\Z)\simeq \Sp(2N,\Z)\ltimes \big[\big(i\pi+\tfrac\hbar2\big)\Z\big]^{2N}$ on $L^2(\BR^N)$, for $\hbar$ pure imaginary. Our Hilbert space $\CH_\Delta^{\otimes 2N}$ is very close to $L^2(\BR^N)$, so we may hope that the Weil representation is also unitary projective there. The most severe projective ambiguity arises from a violation of expected commutation relations between shifts and $T$-type transformations such as \eqref{WeilT}. This leads to projective factors of the form
\be \label{Weilproj} \exp\Big[\frac{1}{2\hbar}\Big(i\pi+\frac\hbar 2\Big)^2a\Big]=\exp\Big[\Big(-\frac{\pi^2}{2\hbar}+\frac{i\pi}{2}+\frac{\hbar^2}{8}\Big)a\Big]\,,\qquad a\in \Z\,.
\ee
With the exception of factors like this, unitarity with respect to the norm 
$$
|\!|f|\!|^2=\int \frac{d^N\!Z}{(\pm2\pi i\hbar)^{N/2}}|f(Z)|^2
$$ 
may be used to normalize Weil transformations. For example, the factor $\big[(2\pi i\hbar)^N\det \mb B\big]^{-1/2}$ in \eqref{simpleRa} follows easily from formal manipulations on the integral transformation to demonstrate unitarity.

The lack of complete invariance under a change of quad type (cyclic permutation invariance) and a 2--3 move can also ruin the normalization of the state integral. The change of quad type was analyzed, formally, in Section 6.2.1 of \cite{D1}. A cyclic permutation of a tetrahedron is accomplished by an affine version of the element $ST\in \Sp(2N;\Z)$, under the Weil representation. The single-tetrahedron wavefunction transforms 
as
\be 
\psi_\hbar(Z) \;\mapsto\; \int \frac{dZ}{\sqrt{2\pi i\hbar}}\,e^{\tfrac1{2\hbar}\big(Z^2+2ZZ'-(2\pi i+\hbar)Z\big)}\psi_\hbar(Z) \;=\; e^{\tfrac{\pi^2}{6\hbar}\pm\tfrac{i\pi}{4}-\tfrac\hbar{24}}\psi_\hbar(Z')\,.
\ee
The last equality follows from the Fourier transform of the quantum dilogarithm \cite{FKVolkov,PT}. This shows that the tetrahedron wavefunction is \emph{invariant} under permutations, up to a factor
\be \label{quadproj}
\exp\Big[\Big(\frac{\pi^2}{6\hbar}\pm\frac{i\pi}{4}-\frac\hbar{24}\Big)a\Big]\,,\qquad a\in \Z\,.\ee

The analysis of the 2--3 move is slightly more involved. It was done in terms of operator algebra in \cite{D1}, and then explained in terms of wavefunctions in Section 6.2 of \cite{DGG1}. The main idea is that a 2--3 move can be done locally during the gluing procedure, by performing a formal, ``local'' transformation on the state integral. The crucial property involved is the Ramanujan-like identity for the quantum dilogarithm \cite{FKVolkov, PT}, which expresses three quantum dilogarithms as an integral of two; for example,
\begin{align} \notag \psi_\hbar(W_1')\psi_\hbar(W_2')\psi_\hbar(W_3')\big|_{W_1'+W_2'+W_3'=2\pi i+\hbar} \\ &\hspace{-1.5in} \;\sim\; \int \frac{dZ}{\sqrt{2\pi i\hbar}}\, e^{\tfrac{1}{2\hbar}\big(Z^2 +2 W_2'Z-(2\pi i+\hbar)(W_1'+W_2'+Z)\big)} \psi_\hbar(-Z)\psi_\hbar(Z-W_1')
\end{align}
which holds up to a factor that is again of the type \eqref{quadproj}. 

Putting together all three effects, we find that we might be able to control the overall normalization of the state integral up to a factor of the form
\be \label{SIMfactor}
\exp\Big[\frac{\pi^2}{6\hbar}a+\frac{i\pi}{4}b+\frac{\hbar}{24}c\Big]\,,\qquad a,b,c\in \Z\,.
\ee


\section{Computer implementation and computations}
\lbl{app.compute}

An enhanced Neumann-Zagier datum is a tuple $(z,\mb A,\mb B,f)$ attached
to a regular ideal triangulation of a cusped hyperbolic manifold $M$.
The program \texttt{SnapPy} \cite{SnapPy} in its {\tt python} and 
{\tt sage} implementation computes the gluing matrices 
$\mb G,\mb G',\mb G''$ of Sections \ref{sub.flat} and \ref{sec.uflat}; 
and therefore it can easily compute an enhanced Neumann-Zagier datum 
$\widehat\beta_\CT=(z,\mb A,\mb B,f)$. The shape parameters $z$
are algebraic numbers that are computed numerically to arbitrary precision
(eg, 10000 digits) or exactly as algebraic numbers.

A {\tt Mathematica} module of the authors computes (numerically or exactly)
the $n$-loop invariants $S_{\CT,n}$ for $n=0,2,3$ as well as our torsion
$\tau_{\CT}$ given as input the Neumann-Zagier datum.
As an example, consider the hyperbolic knot $\mb{9_{12}}$ with volume 
$8.836642343\dots$ and the {\tt SnapPy} ideal triangulation with 
$10$ tetrahedra. Its invariant trace field $E_{\mb{9_{12}}}$
is $\BQ(x)$ where 
$x=-0.06265158\dots + i \,1.24990458\dots$ is a root of 
\begin{align*}
&x^{17} - 8 x^{16} + 32 x^{15} - 89 x^{14} + 195 x^{13} - 353 x^{12} + 542 x^{11} - 
719 x^{10} + 834 x^9  \\
& \qquad - 851 x^8 + 764 x^7 - 605 x^6+ 421 x^5 - 
253 x^4 + 130 x^3 - 55 x^2 + 18 x - 3=0
\end{align*}
$E_{\mb{9_{12}}}$ is of type $[1,8]$ with discriminant $3 \cdot 298171 \cdot 5210119
\cdot 156953399$.
Our torsion is given by
\begin{eqnarray*}
\tau_{\mb{9_{12}}}&=& \frac{1}{2}
\left(15 -7 x -15 x^2 + 55 x^3 -67 x^4 + 81 x^5 -43 x^6 -112 x^7 + 303 x^8 -488
x^9 \right. \\ 
& & \left.
+ 606 x^{10} -595 x^{11} + 464 x^{12} -289 x^{13} + 143 x^{14} -49 x^{15} + 8 x^{16}
\right)
\\
&=& -3.133657804174628986\dots + 14.061239582208047255\dots \, i 
\end{eqnarray*}
The two and three-loop invariants simplify considerably when multiplied
by $\tau_{\mb{9_{12}}}^3$ and $\tau_{\mb{9_{12}}}^6$ respectively and are given by
\begin{eqnarray*}
S_{\mb{9_{12}},2}\,\tau_{\mb{9_{12}}}^3&=& 
\frac{1}{2^6 \cdot 3}
\left(36263 -194718 x + 503316 x^2 -971739 x^3 + 1582041 x^{4} -2152164 x^{5} 
\right. \\ 
& & \left.
+ 2372779 x^{6} -2109742 x^{7} + 1426659 x^{8} -484152 x^{9} -374803 x^{10} 
+ 836963 x^{11} 
\right. \\ 
& & \left.
-859483 x^{12} + 621288 x^{13} -326550 x^{14} + 109607 x^{15} 
-16840 x^{16} \right) \\
&=& 398.62270435384630954\dots + 948.91209325049603870\dots i
\end{eqnarray*}

\begin{eqnarray*}
S_{\mb{9_{12}},3}\tau_{\mb{9_{12}}}^6&=& 
\frac{1}{2^7}
\left(2320213 -19092785 x^{1} + 72589953 x^{2} -186402605 x^{3} 
+ 382362100 x^{4} 
\right. \\ 
& & \left.
-661985976 x^{5} + 982969902 x^{6} -1258919324 x^{7} 
+ 1402544816 x^{8} 
\right. \\ 
& & \left.
-1359436057 x^{9} + 1134208276 x^{10} -803313515 x^{11} 
+ 473961630 x^{12} 
\right. \\ 
& & \left.
-225394732 x^{13} + 80872920 x^{14} -19104127 x^{15} 
+ 2161102 x^{16}
\right)
\\
&=& 71793.64335382669630\dots + 204530.00105728258992\dots i
\end{eqnarray*}
The norm $(N_1,N_2,N_3)=\big(N(\tau_{\mb{9_{12}}}), N(S_{\mb{9_{12}},2}\tau_{\mb{9_{12}}}^3),
N(S_{\mb{9_{12}},3}\tau_{\mb{9_{12}}}^6)\big)$ 
of the above algebraic numbers is given by
{\tiny
\begin{eqnarray*}
N_1&=& \frac{3 \cdot 298171 \cdot 5210119 \cdot 156953399}{2^{17}}
\\
N_2&=& 
\frac{ 173137 \cdot 
2497646101253660962719786587619396709848261029974343408485645575954409}{
2^{102} \cdot 3^{17}}
\\
N_3&=&
\frac{1601979456387778103376978278735985249091224621424605434099548771751970874984335599199394506045984185143}{2^{119}}
\end{eqnarray*}
}
\noindent 
Recall that although $S_{2,\mb{9_{12}}}$ is defined modulo an integer multiple of 
$1/24$, $S_{3,\mb{9_{12}}}$ is defined without ambiguity 
and the numerator $N_3$ is a prime number of $103$ digits.

For a computation of the Reidemeister torsion $\tau^{\Reid}_M$ of the discrete 
faithful representation of a cusped hyperbolic manifold $M$,  
we use a theorem of Yamaguchi \cite{Ya} to identify it with
$$
\tau^\Reid_M=\frac{1}{c_M}\frac{d\tau^\Reid_M(t)}{dt}\left|_{t=1} \right.
$$ 
where $c_M$ is the cusp shape of $M$ and 
$\tau^\Reid_M(t) \in E_M[t^{\pm 1}]$ is the {\em torsion polynomial} of $M$ using the
adjoint representation of $\SL(2,\BC)$. 
Using the {\tt hypertorsion} package of N. Dunfield (see \cite{DFJ}), 
we can compute $\tau^\Reid_M$ as follows:

\begin{verbatim}
cd Genus-Comp
sage:import snappy, hypertorsion

def torsion(manifold, precision=100):
    M = snappy.Manifold(manifold)
    p = hypertorsion.hyperbolic_adjoint_torsion(M, precision)
    q = p.derivative()
    rho = hypertorsion.polished_holonomy(M, precision)
    z = rho.cusp_shape()
    torsion = q(1)/z.conjugate()
    return [M.name(), torsion]
\end{verbatim}
For the above example, we have:

\begin{verbatim}
sage: torsion("9_12",500)
['L105002',  -3.133657804174628986... + 14.061239582208047255...*I]
\end{verbatim}
numerically confirming Conjecture \ref{conj.1loop}.
Further computations gives a numerical confirmation
of Conjecture \ref{conj.1loop} to 1000 digits for all $59924$ hyperbolic 
knots with at most $14$ crossings.


\bibliographystyle{hamsalpha}
\bibliography{biblio}

\newcommand{\etalchar}[1]{$^{#1}$}
\def\cprime{$'$}
\providecommand{\bysame}{\leavevmode\hbox to3em{\hrulefill}\thinspace}
\providecommand{\href}[2]{#2}
\providecommand{\eprint}{\begingroup \urlstyle{rm}\Url}
\begin{thebibliography}{HKK{\etalchar{+}}03}

\bibitem[AK]{AK}
J{\o}rgen~Ellegaard Andersen and Rinat Kashaev, \emph{A tqft from quantum
  teichmüller theory}, \eprint{arXiv:1109.6295}, Preprint 2011.

\bibitem[Bar99]{Ba}
E.~W. Barnes, \emph{The genesis of the double gamma functions}, Proc. London
  Math. Soc. \textbf{31} (1899), 358--381.

\bibitem[BDRV]{BDR-V}
David~W. Boyd, Nathan~M. Dunfield, and Fernando Rodriguez-Villegas,
  \emph{Mahler's measure and the dilogarithm {(II)}},
  \eprint{arXiv:math.NT/0308041}, Preprint 2005.

\bibitem[BIZ80]{BIZ}
D.~Bessis, C.~Itzykson, and J.~B. Zuber, \emph{Quantum field theory techniques
  in graphical enumeration}, Adv. in Appl. Math. \textbf{1} (1980), no.~2,
  109--157.

\bibitem[BNW91]{BNW}
Dror Bar-Natan and Edward Witten, \emph{Perturbative expansion of
  {C}hern-{S}imons theory with noncompact gauge group}, Comm. Math. Phys.
  \textbf{141} (1991), no.~2, 423--440.

\bibitem[BP97]{BP}
Riccardo Benedetti and Carlo Petronio, \emph{Branched standard spines of
  {$3$}-manifolds}, Lecture Notes in Mathematics, vol. 1653, Springer-Verlag,
  Berlin, 1997.

\bibitem[Bur]{Bu}
Benjamin~A. Burton, \emph{{R}egina: Normal surface and 3-manifold topology
  software}, \url{http://regina.sourceforge.net}.

\bibitem[Cal06]{Calegari}
Danny Calegari, \emph{Real places and torus bundles}, Geom. Dedicata
  \textbf{118} (2006), 209--227.

\bibitem[Cas65]{Ca}
B.~G. Casler, \emph{An imbedding theorem for connected {$3$}-manifolds with
  boundary}, Proc. Amer. Math. Soc. \textbf{16} (1965), 559--566.

\bibitem[CCG{\etalchar{+}}94]{CCGLS}
D.~Cooper, M.~Culler, H.~Gillet, D.~D. Long, and P.~B. Shalen, \emph{Plane
  curves associated to character varieties of {$3$}-manifolds}, Invent. Math.
  \textbf{118} (1994), no.~1, 47--84.

\bibitem[CDW]{SnapPy}
Marc Culler, Nathan~M. Dunfield, and Jeffery~R. Weeks, \emph{{S}nap{P}y, a
  computer program for studying the geometry and topology of 3-manifolds}.

\bibitem[Cha03]{Champanerkar}
Abhijit~Ashok Champanerkar, \emph{A-polynomial and {B}loch invariants of
  hyperbolic 3-manifolds}, ProQuest LLC, Ann Arbor, MI, 2003, Thesis
  (Ph.D.)--Columbia University.

\bibitem[DFJ]{DFJ}
Nathan~M. Dunfield, Stefan Friedl, and Nicholas Jackson, \emph{Twisted
  alexander polynomials of hyperbolic knots}, \eprint{arXiv:1108.3045},
  Preprint 2011.

\bibitem[DG]{DG}
J{\'e}r{\^o}me Dubois and Stavros Garoufalidis, \emph{Rationality of the
  {$\mathrm{SL}(2,\BC)$}-reidemeister torsion in dimension 3},
  \eprint{arXiv:0908.1690}, Preprint 2009.

\bibitem[DG11]{DnG}
Nathan~M. Dunfield and Stavros Garoufalidis, \emph{Incompressibility criteria
  for spun-normal surfaces}, Trans. Amer. Math. Soc. \textbf{363} (2011),
  no.~11.

\bibitem[DGG]{DGG1}
Tudor Dimofte, Davide Gaiotto, and Sergei Gukov, \emph{Gauge theories labelled
  by three-manifolds}, \eprint{arXiv:1108.4389}, Preprint 2011.

\bibitem[DGLZ09]{DGLZ}
Tudor Dimofte, Sergei Gukov, Jonatan Lenells, and Don Zagier, \emph{Exact
  results for perturbative {C}hern-{S}imons theory with complex gauge group},
  Commun. Number Theory Phys. \textbf{3} (2009), no.~2, 363--443.

\bibitem[Dim]{D1}
Tudor Dimofte, \emph{Quantum {R}iemann surfaces in {C}hern-{S}imons theory},
  \eprint{arXiv:1102.4847}, Preprint 2011.

\bibitem[DLRS10]{LRS}
Jes{\'u}s~A. De~Loera, J{\"o}rg Rambau, and Francisco Santos,
  \emph{Triangulations}, Algorithms and Computation in Mathematics, vol.~25,
  Springer-Verlag, Berlin, 2010, Structures for algorithms and applications.

\bibitem[DS82]{DuS}
Johan~L. Dupont and Chih~Han Sah, \emph{Scissors congruences. {II}}, J. Pure
  Appl. Algebra \textbf{25} (1982), no.~2, 159--195.

\bibitem[Dub06]{Db}
J{\'e}r{\^o}me Dubois, \emph{Non abelian twisted {R}eidemeister torsion for
  fibered knots}, Canad. Math. Bull. \textbf{49} (2006), no.~1, 55--71.

\bibitem[Dun99]{Dn}
Nathan~M. Dunfield, \emph{Cyclic surgery, degrees of maps of character curves,
  and volume rigidity for hyperbolic manifolds}, Invent. Math. \textbf{136}
  (1999), no.~3, 623--657.

\bibitem[DZ06]{DuZi}
Johan~L. Dupont and Christian~K. Zickert, \emph{A dilogarithmic formula for the
  {C}heeger-{C}hern-{S}imons class}, Geom. Topol. \textbf{10} (2006),
  1347--1372 (electronic).

\bibitem[EP88]{EP}
D.~B.~A. Epstein and R.~C. Penner, \emph{Euclidean decompositions of noncompact
  hyperbolic manifolds}, J. Differential Geom. \textbf{27} (1988), no.~1,
  67--80.

\bibitem[Fad95]{Fa}
L.~D. Faddeev, \emph{Discrete {H}eisenberg-{W}eyl group and modular group},
  Lett. Math. Phys. \textbf{34} (1995), no.~3, 249--254.

\bibitem[FC99]{FC-Teich}
V.~V. Fock and L.~O. Chekhov, \emph{Quantum {T}eichm\"uller spaces}, Teoret.
  Mat. Fiz. \textbf{120} (1999), no.~3, 511--528.

\bibitem[FK94]{FK-QDL}
L.~D. Faddeev and R.~M. Kashaev, \emph{Quantum dilogarithm}, Modern Phys. Lett.
  A \textbf{9} (1994), no.~5, 427--434.

\bibitem[FKV01]{FKVolkov}
L.~D. Faddeev, R.~M. Kashaev, and A.~Yu. Volkov, \emph{Strongly coupled quantum
  discrete {L}iouville theory. {I}. {A}lgebraic approach and duality}, Comm.
  Math. Phys. \textbf{219} (2001), no.~1, 199--219.

\bibitem[Gar]{Ga2}
Stavros Garoufalidis, \emph{Quantum knot invariants}, \eprint{arXiv:1201.3314},
  Mathematische Arbeitstagung 2012.

\bibitem[Gar04]{Ga-Ahat}
Stavros Garoufalidis, \emph{On the characteristic and deformation varieties of
  a knot}, Proceedings of the {C}asson {F}est, Geom. Topol. Monogr., vol.~7,
  Geom. Topol. Publ., Coventry, 2004, pp.~291--309 (electronic).

\bibitem[Gar08]{Ga1}
Stavros Garoufalidis, \emph{Chern-{S}imons theory, analytic continuation and
  arithmetic}, Acta Math. Vietnam. \textbf{33} (2008), no.~3, 335--362.

\bibitem[GHRS]{GHRS}
Stavros Garoufalidis, Craid~D. Hodgson, J.~Hyam Rubinstein, and Henry Segerman,
  \emph{1-efficient triangulations and the index of a cusped hyperbolic
  3-manifold}, Preprint 2012.

\bibitem[GL11]{GL}
Stavros Garoufalidis and Thang T.~Q. L{\^e}, \emph{Asymptotics of the colored
  {J}ones function of a knot}, Geom. Topol. \textbf{15} (2011), 2135--2180.

\bibitem[GM08]{GuM}
Sergei Gukov and Hitoshi Murakami, \emph{{${\rm SL}(2,\Bbb C)$}
  {C}hern-{S}imons theory and the asymptotic behavior of the colored {J}ones
  polynomial}, Lett. Math. Phys. \textbf{86} (2008), no.~2-3, 79--98.

\bibitem[GS]{GukovS}
Sergei Gukov and Piotr Sulkowski, \emph{A-polynomial, b-model, and
  quantization}, \eprint{arXiv:1108.0002}, Preprint 2011.

\bibitem[Guk05]{Gu}
Sergei Gukov, \emph{Three-dimensional quantum gravity, {C}hern-{S}imons theory,
  and the {A}-polynomial}, Comm. Math. Phys. \textbf{255} (2005), no.~3,
  577--627.

\bibitem[GZ]{GZ}
Stavros Garoufalidis and Don Zagier, \emph{The asymptotics of the kashaev
  invariant}, in preparation.

\bibitem[GZ07]{GoZi}
Sebastian Goette and Christian~K. Zickert, \emph{The extended {B}loch group and
  the {C}heeger-{C}hern-{S}imons class}, Geom. Topol. \textbf{11} (2007),
  1623--1635.

\bibitem[Hik01]{Hi}
Kazuhiro Hikami, \emph{Hyperbolic structure arising from a knot invariant},
  Internat. J. Modern Phys. A \textbf{16} (2001), no.~19, 3309--3333.

\bibitem[Hik07]{Hi2}
\bysame, \emph{Generalized volume conjecture and the {$A$}-polynomials: the
  {N}eumann-{Z}agier potential function as a classical limit of the partition
  function}, J. Geom. Phys. \textbf{57} (2007), no.~9, 1895--1940.

\bibitem[HKK{\etalchar{+}}03]{MirrorSym}
Kentaro Hori, Sheldon Katz, Albrecht Klemm, Rahul Pandharipande, Richard
  Thomas, Cumrun Vafa, Ravi Vakil, and Eric Zaslow, \emph{Mirror {S}ymmetry},
  Clay Mathematics Monographs, vol.~1, American Mathematical Society,
  Providence, RI, 2003, With a preface by Vafa.

\bibitem[HRS]{HRS}
Craid~D. Hodgson, J.~Hyam Rubinstein, and Henry Segerman, \emph{Triangulations
  of hyperbolic 3-manifolds admitting strict angle structures},
  \eprint{arXiv:1111.3168}, Preprint 2011.

\bibitem[Jon87]{Jo}
V.~F.~R. Jones, \emph{Hecke algebra representations of braid groups and link
  polynomials}, Ann. of Math. (2) \textbf{126} (1987), no.~2, 335--388.

\bibitem[Kan05]{Kang}
Ensil Kang, \emph{Normal surfaces in non-compact 3-manifolds}, J. Aust. Math.
  Soc. \textbf{78} (2005), no.~3, 305--321.

\bibitem[Kas94]{K94}
R.~M. Kashaev, \emph{Quantum dilogarithm as a {$6j$}-symbol}, Modern Phys.
  Lett. A \textbf{9} (1994), no.~40, 3757--3768.

\bibitem[Kas95]{K95}
\bysame, \emph{A link invariant from quantum dilogarithm}, Modern Phys. Lett. A
  \textbf{10} (1995), no.~19, 1409--1418.

\bibitem[Kas98]{Kashaev-Teich}
\bysame, \emph{Quantization of {T}eichm\"uller spaces and the quantum
  dilogarithm}, Lett. Math. Phys. \textbf{43} (1998), no.~2, 105--115.

\bibitem[KR04]{KR}
Ensil Kang and J.~Hyam Rubinstein, \emph{Ideal triangulations of 3-manifolds.
  {I}. {S}pun normal surface theory}, Proceedings of the {C}asson {F}est, Geom.
  Topol. Monogr., vol.~7, Geom. Topol. Publ., Coventry, 2004,
  \eprint{arXiv:math.GT/0410541}, pp.~235--265.

\bibitem[Mat87]{Ma1}
S.~V. Matveev, \emph{Transformations of special spines, and the {Z}eeman
  conjecture}, Izv. Akad. Nauk SSSR Ser. Mat. \textbf{51} (1987), no.~5,
  1104--1116, 1119.

\bibitem[Mat07]{Ma2}
Sergei Matveev, \emph{Algorithmic topology and classification of 3-manifolds},
  second ed., Algorithms and Computation in Mathematics, vol.~9, Springer,
  Berlin, 2007.

\bibitem[MM01]{MM}
Hitoshi Murakami and Jun Murakami, \emph{The colored {J}ones polynomials and
  the simplicial volume of a knot}, Acta Math. \textbf{186} (2001), no.~1,
  85--104.

\bibitem[M{\"u}l93]{Muller-cx}
Werner M{\"u}ller, \emph{Analytic torsion and {$R$}-torsion for unimodular
  representations}, J. Amer. Math. Soc. \textbf{6} (1993), no.~3, 721--753.

\bibitem[Neu92]{Neumann-combi}
Walter~D. Neumann, \emph{Combinatorics of triangulations and the
  {C}hern-{S}imons invariant for hyperbolic {$3$}-manifolds}, Topology '90
  ({C}olumbus, {OH}, 1990), Ohio State Univ. Math. Res. Inst. Publ., vol.~1, de
  Gruyter, Berlin, 1992, pp.~243--271.

\bibitem[Neu04]{N}
\bysame, \emph{Extended {B}loch group and the {C}heeger-{C}hern-{S}imons
  class}, Geom. Topol. \textbf{8} (2004), 413--474 (electronic).

\bibitem[NZ85]{NZ}
Walter~D. Neumann and Don Zagier, \emph{Volumes of hyperbolic three-manifolds},
  Topology \textbf{24} (1985), no.~3, 307--332.

\bibitem[Pie88]{Pi}
Riccardo Piergallini, \emph{Standard moves for standard polyhedra and spines},
  Rend. Circ. Mat. Palermo (2) Suppl. (1988), no.~18, 391--414, Third National
  Conference on Topology (Italian) (Trieste, 1986).

\bibitem[Pol05]{Polyak}
Michael Polyak, \emph{Feynman diagrams for pedestrians and mathematicians},
  Graphs and patterns in mathematics and theoretical physics, Proc. Sympos.
  Pure Math., vol.~73, Amer. Math. Soc., Providence, RI, 2005, pp.~15--42.

\bibitem[Por97]{Po}
Joan Porti, \emph{Torsion de {R}eidemeister pour les vari\'et\'es
  hyperboliques}, Mem. Amer. Math. Soc. \textbf{128} (1997), no.~612, x+139.

\bibitem[PT01]{PT}
B.~Ponsot and J.~Teschner, \emph{Clebsch-{G}ordan and {R}acah-{W}igner
  coefficients for a continuous series of representations of
  {$\mathcal{U}_q({\rm sl}(2,\mathbb{R}))$}}, Comm. Math. Phys. \textbf{224}
  (2001), no.~3, 613--655.

\bibitem[RS71]{RS}
D.~B. Ray and I.~M. Singer, \emph{{$R$}-torsion and the {L}aplacian on
  {R}iemannian manifolds}, Advances in Math. \textbf{7} (1971), 145--210.

\bibitem[Sha62]{Shale}
David Shale, \emph{Linear symmetries of free boson fields}, Trans. Amer. Math.
  Soc. \textbf{103} (1962), 149--167.

\bibitem[SV]{SpirVar}
V.~P. Spiridonov and G.~S. Vartanov, \emph{Elliptic hypergeometry of
  supersymmetric dualities ii. orthogonal groups, knots, and vortices},
  \eprint{arXiv:1107.5788}, Preprint 2011.

\bibitem[Thu77]{Th}
William Thurston, \emph{The geometry and topology of 3-manifolds},
  Universitext, Springer-Verlag, Berlin, 1977, Lecture notes, Princeton.

\bibitem[Thu82]{Thurston-paper}
\bysame, \emph{Three-dimensional manifolds, {K}leinian groups and hyperbolic
  geometry}, Bull. Amer. Math. Soc. (N.S.) \textbf{6} (1982), no.~3, 357--381.

\bibitem[Til]{Till}
Stephan Tillmann, \emph{Degenerations of ideal hyperbolic triangulations},
  \eprint{arXiv:math.GT/0508295}, Preprint 2010, 31 pages.

\bibitem[Til08]{Till2}
\bysame, \emph{Normal surfaces in topologically finite 3-manifolds}, Enseign.
  Math. (2) \textbf{54} (2008), no.~3-4, 329--380,
  \eprint{arXiv:math.GT/0406271}.

\bibitem[Tur88]{Turaev}
V.~G. Turaev, \emph{The {Y}ang-{B}axter equation and invariants of links},
  Invent. Math. \textbf{92} (1988), no.~3, 527--553.

\bibitem[Wei64]{Weil}
Andr{\'e} Weil, \emph{Sur certains groupes d'op\'erateurs unitaires}, Acta
  Math. \textbf{111} (1964), 143--211.

\bibitem[Wit89]{Witten-Jones}
Edward Witten, \emph{Quantum field theory and the {J}ones polynomial}, Comm.
  Math. Phys. \textbf{121} (1989), no.~3, 351--399.

\bibitem[Wit91]{Witten-cx}
\bysame, \emph{Quantization of {C}hern-{S}imons gauge theory with complex gauge
  group}, Comm. Math. Phys. \textbf{137} (1991), no.~1, 29--66.

\bibitem[Yam08]{Ya}
Yoshikazu Yamaguchi, \emph{A relationship between the non-acyclic
  {R}eidemeister torsion and a zero of the acyclic {R}eidemeister torsion},
  Ann. Inst. Fourier (Grenoble) \textbf{58} (2008), no.~1, 337--362.

\bibitem[Zic09]{Zickert-rep}
Christian~K. Zickert, \emph{The volume and {C}hern-{S}imons invariant of a
  representation}, Duke Math. J. \textbf{150} (2009), no.~3, 489--532.

\end{thebibliography}
\end{document}